\setlist[itemize]{noitemsep} 
\newcommand{%
  
  \import{./figs/}{.pdf_tex}
}[1]{%
  
  \import{./figs/}{#1.pdf_tex}
}
\newcommand{\tikzfig}[1]{%
  \resizebox{\columnwidth}{!}{\includestandalone{figs/#1/#1}}
}
\tikzstyle{state}=[
\definecolor{bubbles}{rgb}{0.91, 1.0, 1.0}
\definecolor{aquamarine}{rgb}{0.5, 1.0, 0.83}
\definecolor{bubblegum}{rgb}{0.99, 0.76, 0.8}
\definecolor{bluebell}{rgb}{0.64, 0.64, 0.82}
\definecolor{dollarbill}{rgb}{0.72, 0.93, 0.6}
\newtheorem{theorem}{Theorem}
\newtheorem{remark}[theorem]{Remark}
\newtheorem{lemma}[theorem]{Lemma}
\newcommand{\figref}[1]{Fig.~\ref{#1}}
\newcommand{\Z}{\mathbb Z}
\newcommand{\Q}{\mathbb Q}
\newcommand{\C}{\mathbb C}
\newcommand{\Proj}{\mathbb P}
\newcommand{\im}{ \operatorname{im}}
\newcommand{\res}{\operatorname{Res}}
\newcommand{\NS}{\operatorname{NS}}
\newcommand{\PH}{\mathit{PH}}
\newcommand{\Hdr}{H_\mathrm{DR}}
\newcommand{\PHdr}{\PH_\mathrm{DR}}
\newcommand{\ud}{\mathrm{d}}
\def\st{\ \middle|\ }
\newcommand{\eqdef}{\smash{\overset{\small{\mathrm{def}}}{=}}}
\title[Effective homology and periods]{Effective homology and periods\\ of complex projective hypersurfaces} 
\author{Pierre Lairez}
\address{Université Paris-Saclay, Inria, 91120 Palaiseau, France}
\email{pierre.lairez@inria.fr}
\author{Eric Pichon-Pharabod}
\address{Institut de Physique Theorique, Université Paris-Saclay, CEA, CNRS, F-91191 Gif-sur-
  Yvette Cedex, France; Université Paris-Saclay, Inria, 91120 Palaiseau, France}
\email{eric.pichon@polytechnique.edu}
\author{Pierre Vanhove}
\address{Institut de Physique Theorique, Université Paris-Saclay, CEA, CNRS, F-91191 Gif-sur-
  Yvette Cedex, France}
\email{pierre.vanhove@ipht.fr}
\date{\today}
\thanks{This work has been supported by the Agence nationale de la recherche
  (ANR), grant agreement ANR-19-CE40-0018 (De Rerum Natura), grant agreement
ANR-20-CE40-0026-01 (Symmetries and moduli spaces in algebraic geometry and physics); and the European
  Research Council (ERC) under the European Union’s Horizon Europe research and
  innovation programme, grant agreement 101040794 (10000~DIGITS)}
\subjclass[2020]{Primary 14Q15;   
  Secondary 32G20, 
  14D05}   
\begin{document}

\maketitle

\begin{abstract}
  We introduce a new algorithm for computing the periods of a smooth complex projective hypersurface.
  The algorithm intertwine with a new method for computing an explicit basis of the singular homology of the hypersurface.
  It is based on Picard--Lefschetz theory and relies on the computation of the monodromy action induced by a one-parameter family of hyperplane sections on the homology of a given section.

  We provide a SageMath implementation. For example, on a laptop, it makes it possible to compute the periods of a smooth complex quartic surface
  with hundreds of digits of precision in typically an hour.
\end{abstract}


\setcounter{tocdepth}{1}
\tableofcontents

\section{Introduction}
The $k$-th period matrix of a smooth complex variety~$X$ is the matrix of the
De~Rham duality~$H_k(X) \times \Hdr^k(X) \to \mathbb{C}$
\begin{equation}\label{eq:21}
  \left( [\gamma], [\omega] \right) \mapsto \int_\gamma \omega,
\end{equation}
between singular homology and (algebraic) De~Rham cohomology.
When~$\Hdr^k(X)$ is endowed with additional structure coming from Hodge theory, we obtain a remarkable
continuous invariant of~$X$ reflecting an interplay between the complex algebraic structure and the topological one \parencite{Griffiths_1968a,CarlsonMullerStachPeters_2017}.
In experimental mathematics, the numerical computation of periods is a useful tool to investigate some algebraic invariants \parencite{ChudnovskyChudnovsky_1989}.
Periods linearise some aspects of algebraic varieties, at the cost of introducing transcendental functions, with the integral sign. This transcendental nature makes it difficult to compute exactly with periods, but large precision (typically hundreds or thousands of decimal digits) may allow to recover exact invariants.

For curves, the numerical computation of periods gives access to an approximate representation of the Jacobian, which in turn leads to interesting invariants.
This has been used to compute some of the data related to genus~2 curves
in the LMFDB \parencite{BookerSijslingSutherlandVoightYasaki_2016}, such as the endomorphism rings \parencite{CostaMascotSijslingVoight_2019} or the Sato--Tate groups \parencite{FiteKedlayaRotgerSutherland_2012}, which can be obtained by recovering integer relations between the periods.
For surfaces, periods also lead to interesting invariants that are hard to compute by other means.
For example, for an algebraic surface~$X$, Lefschetz's $(1,1)$-theorem \parencite[p.~163]{GriffithsHarris_1978} relates the integer relations between the periods of~$X$ with the algebraic curves lying on~$X$.
Integer relations can be recovered from numerical approximations of the periods thanks to lattice reduction algorithms \parencite{LenstraLenstraLovasz_1982}.
This leads to a practical algorithm for computing, heuristically, the Néron--Severi group of a surface in~$\mathbb{P}^3(\mathbb{C})$ \parencite{LairezSertoz_2019}.
The appearance of explicit algebraic varieties in various fields of mathematics, such as  Diophantine approximation \parencite{BeukersPeters_1984} or mathematical physics \parencite{BlochKerrVanhove_2015},
is a strong incentive for developing automatic methods to compute algebraic invariants.

\subsection{Contributions}
We focus on the case of projective hypersurfaces, although most of the ideas
that we present apply in more generality.
Let~$X \subset \mathbb{P}^n(\mathbb{C})$ be a complex projective hypersurface defined by a polynomial~$F(x_0,\dotsc,x_n) \in \mathbb{C}[x_0,\dotsc,x_n]$.
A major obstacle in the computation of the period matrices for~$X$ is the lack of algorithms for computing an explicit description of the singular homology of~$X$.
By ``explicit'', we mean explicit enough to integrate numerically over a basis of cycles.
We describe an algorithm for computing at the same time:
\begin{enumerate}
  \item an explicit basis of the singular homology of~$X$;
  \item a numerical approximation, with rigorous error bounds, of the period
        matrix of~$X$ with respect to this homology basis, and the
        Griffiths--Dwork cohomology basis.
\end{enumerate}
With respect to precision only, the complexity of the algorithm is quasilinear: computing twice as much digits takes roughly twice as much time.
Note that, as a specificity of projective complete intersections,
only the $n$-th period matrix of~$X$ is interesting, the other ones are trivial ($1\times 1$ or~$0\times 0$ matrices).

The algorithm relies on the study of a one-parameter family of hyperplane sections~$X \cap H_t$,
following the principles of Picard--Lefschetz theory \parencite{Lefschetz_1924,Lamotke_1981}.
An important step of the algorithm is the computation of the monodromy action induced by this family of sections on the homology~$H_{n-1}(X \cap H_b)$ of one section (above a chosen basepoint $b\in \Proj^1$).
We perform this computation through duality with De~Rham cohomology,
using the period matrix of~$X \cap H_b$, obtained by induction on dimension.

To perform the numerical integration, we reduce to one-variable complex integrals
of rational multiples of the periods of~$H_{n-1}(X\cap H_b)$.
Since these periods are solutions of a Picard--Fuchs differential equation, which we can compute explicitly, we can perform the integration using general algorithms for integration of \emph{differentially finite functions} \parencite{VanDerHoeven_1999,Mezzarobba_2010}. 
These algorithms provide rigorous error bounds with quasilinear complexity with respect to precision.

On the practical side, we have implemented the above algorithm in Sagemath \parencite{sagemath}, together with the computation of related invariants (Picard rank, Néron--Severi group, endomorphism ring).
This implementation can be found in the package \mbox{\emph{lefschetz-family}}\footnote{\url{https://gitlab.inria.fr/epichonp/lefschetz-family}}.
We are able to compute the periods of the holomorphic form of a smooth quartic surface in~$\mathbb{P}^3(\mathbb{C})$ defined over~$\mathbb{Q}$ in, typically, about an hour on a laptop, with 300~decimal digits of precision. Naturally, the actual running time depends on many parameters, including the bitsize of the coefficients of the defining equation, and the conditionning of some numerical steps, see Section~\ref{sec:experiments} for concrete examples.
Such a computation was not feasible in reasonable time with the previously known algorithm \parencite{Sertoz_2019}, except for some quartic surfaces defined by sparse polynomials.

At the moment, our algorithm only applies to smooth projective hypersurfaces
but the methodology has a wider scope. To illustrate it,
we explain the computation of the periods of a $K3$ surface described as the desingularisation of a nodal quartic surface, using some \emph{ad hoc} ingredients (Section~\ref{sec:an-example-from}).

\subsection{Related works}
Algorithms for computing period matrices of curves (also known as Riemann matrices)
are well established, with work by~\textcite{DeconinckVanHoeij_2001,Swierczewski_2017,BruinSijslingZotine_2019,MolinNeurohr_2019,Neurohr_2018}, to name a few.
The papers by~\textcite{CynkvanStraten2019,ElsenhansJahnel2022} are the first to tackle higher dimensions in some particular cases: double covers of $\mathbb{P}^3(\mathbb{C})$ ramified along 8~planes,
and double covers of~$\mathbb{P}^2(\mathbb{C})$ ramified along 6~lines, respectively.
The algorithm by~\textcite{Sertoz_2019}
is the first and only algorithm to tackle the case of smooth hypersurfaces in any dimension.
It proceeds by global deformation from the hypersurface~$X \subset \mathbb{P}^n(\mathbb{C})$ to the Fermat hypersurface defined by $\sum_{i=0}^n x_i^d = 0$, with~$d$ equal to the degree of~$X$.
In a nutshell, our method is more efficient, in most cases, because the Picard--Fuchs equation associated to this global deformation is generally much larger than the Picard--Fuchs differential equation associated to a family of hyperplane sections.
Moreover, Sertöz' method requires a \emph{base point}, a hypersurface of which we know the periods \emph{a priori}.
Our method is more intrinsic, which gives it the potential to study varieties beyond smooth projective hypersurfaces.

Computing a period matrix of an algebraic variety requires to address, in one way or another, three computational problems reflecting the very definition of a period matrix:
first, compute a basis of the homology, second, compute a basis of the cohomology, and, third, compute the coefficients of the pairing of~\eqref{eq:21} by numerical integration.
Each of these problems is related to a different research field.

\subsubsection{Singular homology}
Computing a basis of the singular homology is a major obstacle. While there are algorithms for computing the homology of submanifolds of~$\mathbb{R}^n$ from sample points \parencite{NiyogiSmaleWeinberger_2008,CuckerKrickShub_2018} they seem challenging to implement and exploit. A complex surface in~$\mathbb{P}^3(\mathbb{C})$ is already a 4-dimensional real manifold in a 6-dimensional ambient space.
Recent experiments suggest that the number of samples required to compute rigorously the homology is rather large \parencite{DiRoccoEklundGafvert_2020} compared to the Betti numbers. 
Furthermore, we do not know how to use such a data structure (homology computed from sample points) to efficiently compute periods.

In the case of curves, the problem reduces to computing the monodromy action on the roots of a univariate polynomial depending on a parameter \parencite{TretkoffTretkoff_1984}.
In the case of projective hypersurfaces, Sertöz' algorithm deals with the problem indirectly
and only need a description of the homology of Fermat hypersurface, which has been worked out by \parencite{Pham_1965}.

\subsubsection{De Rham cohomology}

The algebraic De Rham cohomology,
in the case of projective hypersurfaces,
is well described
by the Griffiths--Dwork reduction \parencite{Griffiths_1969a}, both theoretically and computationally (see Section~\ref{sec:de-rham-middle}).
There are also algorithms for the affine case \parencite{OakuTakayama_1999},
but we are not aware of any algorithm for the general case of a projective variety (not to mention that there are also projective varieties that are not naturally embedded in a projective space, like multiprojective varieties, or fibrations). 
This is the main reason for restricting our focus to projective hypersurfaces.

\subsubsection{Numerical integration}
For the integration step, a direct approach seems possible. If the homology
basis is sufficently explicit, and if we can numerically evaluate the
differential forms defining the cohomology basis at any given point, we can
certainly compute the pairing of Equation~\eqref{eq:21}. 

However we aim for high precision, so all
finite-order quadrature methods (like Simpson's rule or Monte-Carlo algorithms) are ruled out because they have exponential complexity with respect to precision. The periods are
$n$-dimensional integrals of algebraic functions, where~$n$ is the complex dimension of~$X$. 
Assuming that we can formulate them as integrals over a $n$-simplex, or~$[0,1]^n$, we can compute them with the
Gauss--Lengendre quadrature formula. 
We expect a~$p^{n+1 + o(1)}$ complexity with respect to the precision~$p$. 
For computing the periods of curves, this is a $p^{2+o(1)}$ complexity and this method is the most commonly used.
In our method, we use numerical analytic continuation of differentially finite functions
to compute integrals. The complexity is quasilinear with respect to precision, $p^{1+o(1)}$, thanks to binary splitting.
Numerical analytic continuation is also what we use to compute monodromy actions, so it is natural to use it for integration as well.
Note however that, in our typical computations (involving large degree differential operators and large but not extreme precision), we are far from the threshold where binary splitting gets better than $p^{2+o(1)}$ methods.

\subsection{Content}

Let $X \subset \mathbb{P}^n(\mathbb{C})$ be a smooth hypersurface variety.
Let~$(H_t)_{t\in \mathbb{P}^1}$ be a generic pencil of hyperplanes.
Up to a change of coordinates, we may assume that~$H_t = V(x_n - tx_0)$.
Let~$b \in \mathbb{P}^1$ be a generic base point.
Let~$\Sigma \subset \mathbb{P}^1$ be the set of all~$t$ such that~$X\cap H_t$ is singular.
The pencil induces an action of~$\pi_1(\mathbb{P}^1\setminus \Sigma)$ on~$H_{n-1}(X \cap H_b)$.
In Section~\ref{sec:effect-picard-lefsch}, we show, mostly following~\textcite{Lamotke_1981},
how this monodromy action determines the homology of~$X$ in an explicit way.
Section~\ref{sec:algorithms} decribes the main algorithm. Section~\ref{example} details the example of a quartic surface.
Section~\ref{sec:further-algorithms} deals with the computation of the intersection product on~$H_2(X)$, when~$X$ is a smooth surface in~$\mathbb{P}^3$.
Section~\ref{sec:experiments} reports on benchmarks and applications.

\subsection{Acknowledgments}
We thank Marc Mezzarobba for his help with interfacing with the \emph{ore\_algebra} package.
The examples of Sections~\ref{Bouyer} and~\ref{symmetric}, the random search of Section~\ref{sec:picard_rank_examples} and the comparisons of Section~\ref{sec:comparison} were run on the Infinity Cluster hosted by Institut d'Astrophysique de Paris.  
We thank Stephane Rouberol for running smoothly this cluster for us.
We thank Xavier Roulleau for bringing the works of Keiji Oguiso to our attention.
We also thank Alice Garbagnati, Erik Panzer and Emre Sertöz for interesting and enlightening discussions and comments.
Finally, we thank the referees for their careful reading of the manuscript, and their valuable comments and suggestions.

\section{Picard-Lefschetz theory}
\label{sec:effect-picard-lefsch}

\subsection{Lefschetz fibrations and Lefschetz' main lemma}

Following~\textcite{Lamotke_1981}, we recall a few elements of Picard--Lefschetz theory that are important to our study.

\subsubsection{Lefschetz fibrations}
\label{sec:lefschetz-fibrations}

Let $\Proj^N$ denote the $N$-dimensional complex projective space. Let $A\subset \Proj^N$ be an $N-2$-dimensional projective subspace.
The {\em pencil of axis $A$} is the one-dimensional family of hyperplanes of $\Proj^N$ that contain $A$.
It is parametrised by $\Proj^1$. Concretely, if~$A$ is the vanishing locus of two linear forms~$\lambda$ and~$\mu$,
then for~$t \in \mathbb{P}^1$, we define~$H_t = V(\lambda - t \mu)$ (and~$H_\infty = V(\mu)$).

We consider an irreducible, closed complex projective variety $X\subset \Proj^N$, with dimension $\dim X = n$.
We aim at studying the topology of~$X$ through the intersections of~$X$ with the hyperplanes~$H_t$.
For~$t\in \mathbb{P}^1$, let
\begin{equation}
  X_t \eqdef X\cap H_t.
\end{equation}
For any~$x \in X\setminus A$, there is a unique~$t \in \mathbb{P}^1$ such that~$x \in H_t$.
This defines a rational map~$X \dashrightarrow \mathbb{P}^1$.
It is useful to consider a modification~$Y$ of~$X$:
\begin{equation}
  \label{eq:18}
  Y \eqdef \{(x,t)\in X\times \mathbb{P}^1\mid x\in H_t\}.
\end{equation}
The projection on the first coordinate induces a proper map~$\pi:Y\to X$,
which is an isomorphism above~$X\setminus A$.
The projection on the second coordinate induces a regular map~$f : Y\to \mathbb{P}^1$
which resolves the indeterminacies of the map~$X \dashrightarrow \mathbb{P}^1$.
The map $\pi$ is the blowup of~$X$ at the base locus~$X' \eqdef X\cap A$ of $X \dashrightarrow \mathbb{P}^1$.
The fibre~$f^{-1}(t)$ above some $t\in \mathbb{P}^1$ is isomorphic to~$X_t$.
Indeed, by definition, $f^{-1}(t) = \left\{ x\in X \st x\in H_t \right\} = X\cap H_t$.

Let $\Sigma \eqdef \left\{ f(x) \ \middle|\ x\in Y\text{ and } \mathrm df = 0 \right\}$ be the set of critical values.
It is also the set of all~$t\in \mathbb{P}^1$ such that~$X_t$ is singular.
It is well known that $\Sigma$ is finite (since~$f$ is locally constant on the subvariety~$\left\{ \mathrm df = 0 \right\}\subseteq Y$).
If the critical points of~$f$ are all nondegenerate (meaning the Hessian matrix at the critical point is nonsingular) and the critical values associated to different critical points are distinct, this is a {\em Lefschetz fibration}.
We will work under this hypothesis, which is satisfied as long as~$A$ is generic \parencite[\S 1.6]{Lamotke_1981}.


\subsubsection{Monodromy and extensions}
\label{sec:monodromy-extensions}

We recall the concepts of monodromy and extensions \parencite[see][\S 6.4]{Lamotke_1981}.
By Ehresmann's theorem, the restriction of $f$ to $f^{-1}(\mathbb{P}^1\setminus \Sigma)$ is a smooth fibre bundle:
for any simply connected open set~$U\subseteq \mathbb{P}^1\setminus \Sigma$ and any~$b\in U$,
there is a diffeomorphism~$\Phi_U : f^{-1}(U) \to X_b \times U$
with the compatibility $\operatorname{pr}_2 \circ \Phi_U = f|_{f^{-1}(U)}$
and~$\operatorname{pr}_1 \circ \Phi_U |_{X_b} = \operatorname{id}_{X_b}$.

In particular, for any continuous path $\ell: [0,1]\to \mathbb{P}^1\setminus \Sigma$
without self intersection
from a point~$b = \ell(0)$ to another~$b'= \ell(1)$,
we obtain a continuous deformation of~$X_b$ into~$X_{b'}$.
Namely, we pick a simply connected neighbourhood~$U$ of~$\ell([0,1])$
and we have the induced diffeomorphism
\[ X_b \to X_{b'}, \quad x \mapsto \Phi_U^{-1}( x, b'). \]
This diffeomorphism is uniquely determined up to homotopy
so it induces a uniquely determined isomorphism~$\ell_* : H_q(X_b) \to H_q(X_{b'})$, for any~$q$.
This map depends only on the homotopy class of~$\ell$ in~$\mathbb{P}^1\setminus \Sigma$.
This is the {\em action of monodromy along $\ell$ on homology}.
To extend this notion to self-intersecting paths,
we cut the paths into non-self-intersecting pieces
and compose the actions of each piece.
For any paths~$\ell$ and~$\ell'$, with~$\ell(1) = \ell'(0)$,
let $\ell'\ell$ denote the composition (go through~$\ell$ then~$\ell'$).
Then~$(\ell' \ell)_* = \ell'_* \ell_*$.
This defines the monodromy action of~$\pi(\mathbb{P}^1\setminus \Sigma)$ on~$H_q(X_b)$.

\begin{figure}[tbp]
  \centering
  \begin{center}
    \resizebox{.7\linewidth}{!}{
      \tikzfig{extension_monodromy}
    }
  \end{center}
  \caption{The monodromy and extensions of a cycle $\gamma$ along two composable paths $\ell$ and $\ell'$. 
  The extension of $\gamma$ along $\ell\ell'$ is the sum of the extensions of $\gamma$ along $\ell$ and of $\ell_*\gamma$ along $\ell'$. 
  As stated in \eqref{relation_extension_monodromy}, the border of $\tau_\ell(\gamma)$ is $\ell_*\gamma - \gamma$.}
  \label{extension_monodromy}
\end{figure}

In the same setting, given a $q$-chain~$A$ in~$X_b$,
we may consider the $q+1$-chain~$A \times [0,1]$ in~$X_b\times [0,1]$
and its image~$\Phi_U(A \times [0,1])$ in~$Y$.
Given any open set $V$ containing~$f^{-1}(U)$, this induces a map
\begin{equation}
  \tau_\ell : H_{q}(X_b) \to H_{q+1}(V, X_b\cup X_{b'})
\end{equation}
called the {\em extension along $\ell$}.
(This map also follows from the Künneth formula
for the pairs~$(X_b, \varnothing)$ and~$([0,1], \{0,1\})$ since~$f^{-1}(\ell([0,1])) \simeq X_b \times [0,1]$.)
The map~$\tau_\ell$ depends only on the homotopy class of~$\ell$.
Given two paths $\ell$ and $\ell'$ with~$\ell(1) = \ell'(0)$ we have (Fig.~\ref{extension_monodromy})
\begin{equation}
  \label{extension_composition}
  \tau_{\ell'\ell} = \tau_{\ell} + \tau_{\ell'}\circ \ell_*\,.
\end{equation}
Using this composition rule one may define extensions along paths with self intersections.
When the path~$\ell$ is a loop from~$b$ to~$b$, we obtain a map
\begin{equation}
  \tau_\ell : H_{q}(X_b) \to H_{q+1}(V, X_b),
\end{equation}
for any neighbourhood~$V \subseteq Y$ of~$f^{-1}(\ell([0,1]))$.

These two concepts, extension and monodromy, are related by the formula
\begin{equation}
  \label{relation_extension_monodromy}
  (-1)^{n-1} \partial \circ \tau_\ell = \ell_* -\operatorname{id}\,,
\end{equation}
where $\partial: H_{q}(Y, X_b\cup X_b') \to H_{q-1}(X_b)\oplus H_{q-1}(X_{b'})$ is the border map \parencite[\S 6.4.6]{Lamotke_1981}.


\subsubsection{Vanishing cycles}
\label{sec:vanishing-cycles}

\begin{figure}[tbp]
  \centering
  \resizebox{.65\linewidth}{!}{
    \tikzfig{homotopy_basis}
  }
  \caption{The simple loops $\ell_i$'s around the critical points represent a basis of the homotopy group $\pi_1(\mathbb{C}\setminus \Sigma, b)$. }
  \label{representatives_homotopy}
\end{figure}

We assume that $\infty \in \mathbb{P}^1$ is a regular value (i.e.\ not a critical value) of~$f : Y \to \mathbb{P}^1$ and consider $\mathbb{C}=\mathbb{P}^1 \setminus \{\infty\}$.
A basis of $\pi_1(\mathbb{C}\setminus\Sigma, b)$, the fundamental group of $\mathbb{C}\setminus\Sigma$ pointed at $b$, is given by loops $(\ell_1, \dots,\ell_r)$ around the elements $t_1, \dots, t_r$ of $\Sigma$ (\figref{representatives_homotopy}).
Under the hypothesis that~$f$ is a Lefschetz fibration, the monodromy action on~$H_{n-1}(X_b)$ along these paths are described by the Picard--Lefschetz formula \parencite[\S6.3.3]{Lamotke_1981}:
for $1\le i\le r$,
\begin{equation}
  \label{picard_lefschetz_formula}
  {\ell_i}_*(\eta) = \eta + (-1)^{n(n+1)/2}\langle\eta, \delta_i\rangle \delta_i
\end{equation}
where $\langle\cdot,\cdot\rangle$ is the intersection product on $H_{n-1}(X_b)$ and $\delta_i$ is a cycle in $H_{n-1}(X_b)$, called the {\em vanishing cycle} at $t_i$, determined up to sign.
Note that the vanishing cycle not only depends on $t_i$, but also on the choice of the homotopy class of $\ell_i$.
Note also, as a consequence of~\eqref{picard_lefschetz_formula}, that the map ${\ell_i}_*-\operatorname{id}$ is of rank~1, and its image is generated by $\delta_i$.

\subsubsection{Thimbles}
Decompose the Riemann spere $\Proj^1$ in two closed hemispheres $D_+$ and $D_-$, such that all critical values of $f:Y\to \mathbb{P}^1$ lie in the interior of $D_+$. We pick a base point~$b$ on the equator $S^1 = D_+\cap D_-$.
Finally we denote $Y_+ = f^{-1}(D_+)$, $Y_0 = f^{-1}(S^1)$ and $X_b = f^{-1}(b)$, so that
\begin{equation}
  X_b\subset Y_0\subset Y_+\subset Y.
\end{equation}
For each of the elementary paths~$\ell_i$ (Fig.~\ref{representatives_homotopy}), we consider the map $\tau_{\ell_i} : H_{n-1}(X_b) \to H_n(Y_+, X_b)$
defined in Section~\ref{sec:monodromy-extensions}.
Then~$\tau_{\ell_i}$ has rank one and there is a uniquely determined element~$\Delta_i \in H_n(Y_+, X_b)$,
the \emph{Lefschetz thimble} associated to the critical value~$t_i$,
such that \parencite[(6.7.1)]{Lamotke_1981}
\begin{equation}\label{eq:17}
  \tau_{\ell_i} \eta = -(-1)^{\frac{n(n-1)}{2}} \langle \eta, \delta_i \rangle \Delta_i \in H_{n}(Y_+, X_b),
\end{equation}
which combined with \eqref{relation_extension_monodromy} gives back the Picard--Lefschetz formula~\eqref{picard_lefschetz_formula}.

The thimble~$\Delta_i$ is dependent on the choice of the homotopy class of $\ell_i$.
By definition, it can be obtained as the extension~$\tau_{\ell_i}(p_i)$ of some cycle~$p_i \in H_{n-1}(X_b)$.
It is the generator of~$H_n(f^{-1}(V), X_b)$, for $V$ a simply connected neighbourhood of $\ell_i$ that does not contain any critical value other than $t_i$.
It also can be constructed as the extension of~$\delta_i$ along a path from~$b$ to the singular value~$t_i$ (\figref{fig:thimble}).
In particular, note that
\begin{equation}\label{border_thimble}
  \partial \Delta_i = \delta_i.
\end{equation}

\begin{figure}[tbp]
  \centering
  \begin{subfigure}[b]{0.45\linewidth}
    \begin{center}
      \resizebox{\linewidth}{!}{
        \tikzfig{thimbles}
      }
    \end{center}
    \caption{There is a unique cycle $\Delta_i \in H_n(Y^{+}, X_b)$ stemming from extensions along the simple loop $\ell_i$ around $t_i$. It is called the {\em Lefschetz thimble at $t_i$}.
    It is dependant on the homotopy class of $\ell_i$.}
  \end{subfigure}
  \hspace{1em}
  \begin{subfigure}[b]{0.45\linewidth}
    \begin{center}
      \resizebox{.8\linewidth}{!}{
        \tikzfig{vanishing_cycle}
      }
    \end{center}
    \caption{The border $\delta_i = {\ell_i}_*p_i - p_i$ of $\Delta_i$ is the {\em vanishing cycle at $t_i$}.
    This name comes from the fact that the geometric representative of $\delta_i$ collapses when deforming towards $t_i$.
    $\Delta_i$ is the extension of $\delta_i$ along a path connecting $b$ to $t_i$.}
  \end{subfigure}
  \caption{Thimbles and vanishing cycles.}
  \label{fig:thimble}
\end{figure}

\subsection{Reconstructing homology}
In order to study the homology of $Y$, we start from the explicit description of the relative homology group~$H_n(Y_+, X_b)$ in terms of the Lefschetz thimbles.

\begin{lemma}[{Main lemma, \cite[\S5]{Lamotke_1981}}]
  \label{lem:main}
  With the notations above,
  \begin{enumerate}[(i)]
    \item $H_q(Y_+, X_b) = 0$ if $q\ne n$;
    \item \label{homology_Y}
          $H_n(Y_+, X_b)$ is free of rank~$r$
          and~$\Delta_1,\dotsc,\Delta_r$ is a basis.
  \end{enumerate}
\end{lemma}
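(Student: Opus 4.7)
The plan is to localize the computation of~$H_\bullet(Y_+,X_b)$ near each critical value and then to compute the local contribution by Morse theory.

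First I choose, inside~$D_+$, pairwise disjoint small closed disks~$D_i$ around each~$t_i$, and pairwise disjoint arcs~$\gamma_i\subset D_+\setminus\bigcup_j\operatorname{int}(D_j)$ joining~$b$ to a point~$b_i\in\partial D_i$ (the arcs meeting only at~$b$ and chosen to match the homotopy classes of the~$\ell_i$'s). Set~$T=\bigcup_i(\gamma_i\cup D_i)$. Since the only critical values of~$f$ in~$D_+$ are the~$t_i\in T$, one can deformation retract~$D_+$ onto~$T$ while crossing only regular values, and by Ehresmann's theorem this retract lifts to a deformation retract of~$Y_+=f^{-1}(D_+)$ onto~$f^{-1}(T)$ relative to~$X_b$. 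Hence~$H_\bullet(Y_+,X_b)\cong H_\bullet(f^{-1}(T),X_b)$.

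Next I decompose~$f^{-1}(T)=\bigcup_i V_i$ with~$V_i=f^{-1}(\gamma_i\cup D_i)$. Since the arcs~$\gamma_i$ meet only at~$b$, we have~$V_i\cap V_j=X_b$ for~$i\ne j$, so collapsing~$X_b$ identifies~$f^{-1}(T)/X_b$ with the wedge~$\bigvee_i V_i/X_b$, yielding
\[
  H_\bullet(f^{-1}(T),X_b)\;\cong\;\bigoplus_i H_\bullet(V_i,X_b).
\]
Moreover, trivializing~$f$ over the contractible arc~$\gamma_i$ produces a deformation retract of~$V_i$ onto~$f^{-1}(D_i)$ that carries~$X_b$ onto~$X_{b_i}$, whence~$H_\bullet(V_i,X_b)\cong H_\bullet(f^{-1}(D_i),X_{b_i})$.

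It remains to establish the local computation
\[
  H_k(f^{-1}(D_i),X_{b_i})=\begin{cases}\Z\cdot\Delta_i & k=n,\\0 & k\ne n,\end{cases}
\]
which is the heart of the argument. This is the standard Morse-theoretic analysis of a Lefschetz singularity: by the holomorphic Morse lemma at the unique nondegenerate critical point~$p_i$ of~$f|_{f^{-1}(D_i)}$, there are local coordinates in a small ball~$B_i\subset Y$ around~$p_i$ in which~$f-t_i=z_1^2+\cdots+z_n^2$. In this model, the local Milnor fiber~$f^{-1}(b_i)\cap B_i$ is diffeomorphic to the unit disk bundle of~$T^*S^{n-1}$ and thus retracts onto the vanishing sphere~$\delta_i$, while~$f^{-1}(D_i)\cap B_i$ is contractible. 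Outside~$B_i$ the map~$f:f^{-1}(D_i)\setminus B_i\to D_i$ is a proper submersion and thus a trivial fibration over the disk~$D_i$, from which one concludes that~$f^{-1}(D_i)$ has the homotopy type of~$X_{b_i}$ with one~$n$-cell attached along~$\delta_i$; this cell is precisely the Lefschetz thimble~$\Delta_i$, realized as the extension of~$\delta_i$ along a path from~$b_i$ to~$t_i$. The long exact sequence of the pair (or cellular homology) then yields the stated local result, and summing over~$i$ produces both (i) and (ii) together with the claimed basis.

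The main obstacle is the local step: the holomorphic Morse normal form and the identification of the local Milnor fiber, together with the gluing of the local model to the exterior fiber bundle, require some care, but they are the classical ingredients of Picard--Lefschetz theory as laid out in~\textcite{Lamotke_1981}.
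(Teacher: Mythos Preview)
The paper does not give its own proof of this lemma; it is stated with a reference to Lamotke's exposition. Your argument is correct and is precisely the standard one found there: retract~$D_+$ onto a bouquet of arcs and small disks around the critical values (lifting via Ehresmann), split the relative homology as a direct sum over the critical values, and use the local holomorphic Morse model to see that each pair~$(f^{-1}(D_i),X_{b_i})$ is obtained by attaching a single~$n$-cell---the thimble~$\Delta_i$---along the vanishing sphere.
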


To retrieve~$H_n(Y)$ from the thimbles, the main object is
\begin{equation}
  \mathcal{T}(Y) \eqdef \frac{\ker \left( \partial : H_n(Y_+,X_b) \to H_{n-1}(X_b) \right)}{\im \left( \tau_\infty : H_{n-1}(X_b) \to H_n(Y_+, X_b) \right)},
\end{equation}
where~$\tau_\infty$ is the extension map along the equator.
Let us describe informally the nature of it.
By Lemma~\ref{lem:main}, elements of~$H_n(Y_+, X_b)$ are linear combinations of thimbles.
Thimbles have boundary in~$X_b$ (Fig.~\ref{fig:thimble}), but the boundary of a linear combination of thimbles may be homologically~0 in~$X_b$.
These linear combinations form a subspace which is exactly $\ker \left( \partial : H_n(Y_+,X_b) \to H_{n-1}(X_b) \right)$.
Let~$C$ be such a linear combination. 
By definition, $\partial C$ is homologically $0$ in~$X_b$, so we can add a~$n$-chain~$C'$ in~$X_b$ such that~$\partial(C + C') = 0$. 
Then, the $n$-chain $C+C'$ is a $n$-cycle in~$Y$. Since~$C'$ is determined only up to~$n$-cycles in~$X_b$,
we obtain a well defined element in~$H_n(Y)/\iota_* H_n(X_b)$, where~$\iota : X_b \to Y$ is the inclusion.

Combinations of thimbles produced by the extension map $\tau_\infty : H_{n-1}(X_b) \to H_n(Y_+, X_b)$
are irrelevant, because the equator is contractible in~$\mathbb{P}^1\setminus \Sigma$, so these extensions are homologically zero in~$Y$.
This explains why we have a map
\begin{equation}\label{eq:4}
  \mathcal{T}(Y) \to \frac{H_n(Y)}{\iota_* H_n(X_b)}.
\end{equation}
By composing with~$\pi_* : H_n(Y) \to H_n(X)$, we also obtain a map
\begin{equation}
  \mathcal{T}(Y) \to \frac{H_n(X)}{\iota_* H_n(X_b)}.
\end{equation}

\begin{theorem}\label{lem:TY}
  We have an exact sequence
  \[ 0 \to \mathcal{T}(Y) \to  \frac{H_n(Y)}{\iota_* H_n(X_b)} \to H_{n-2}(X_b) \to 0, \]
  where the arrow from~$\mathcal{T}(Y)$ is~\eqref{eq:4}, and the arrow to~$H_{n-2}(X_b)$
  is given by intersecting with~$X_b$.

  Moreover, let~$K$ be the kernel of the map~$ H_{n-2}(X') \to H_{n-2}(X_b)$ induced by inclusion.
  We have an exact sequence
  \[ 0 \to K \to \mathcal{T}(Y) \to \frac{H_n(X)}{\iota_* H_n(X_b)} \to 0. \]
\end{theorem}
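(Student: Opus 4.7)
The strategy is to derive both exact sequences from the long exact sequences of appropriate pairs, using the Main Lemma (\lemref{lem:main}) and a Künneth-type computation on the side disjoint from critical values.

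For the first sequence, I start from the long exact sequence of the triple $(Y, Y_+, X_b)$. \lemref{lem:main} kills $H_q(Y_+, X_b)$ for $q \ne n$, collapsing the sequence. To compute $H_*(Y, Y_+)$, excision gives $H_*(Y, Y_+) \cong H_*(Y_-, Y_0)$; since $D_-$ is a contractible disk containing no critical values, $f|_{Y_-}$ is a trivial bundle and, after a choice of trivialisation, $(Y_-, Y_0) \cong (X_b \times D_-,\, X_b \times S^1)$, so the Künneth formula yields $H_k(Y, Y_+) \cong H_{k-2}(X_b)$. The connecting map $H_{n+1}(Y, Y_+) = H_{n-1}(X_b) \to H_n(Y_+, X_b)$ is identified with $\tau_\infty$ by evaluating on the representative $\gamma \times D_-$. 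This yields the short exact sequence
\[
0 \to H_{n-1}(X_b) \xrightarrow{\tau_\infty} H_n(Y_+, X_b) \xrightarrow{j_*} H_n(Y, X_b) \xrightarrow{k_*} H_{n-2}(X_b) \to 0.
\]
From the pair $(Y, X_b)$, we have $H_n(Y)/\iota_* H_n(X_b) \cong \ker(\partial \colon H_n(Y, X_b) \to H_{n-1}(X_b))$. Since $\partial \circ j_* = \partial^+$ and $\partial^+ \circ \tau_\infty = 0$ by \eqref{relation_extension_monodromy} (the equator bounds $D_-$ in $\Proj^1 \setminus \Sigma$, so $\ell_{\infty *} = \operatorname{id}$), the map $j_*$ descends to an injection $\mathcal{T}(Y) \hookrightarrow \ker \partial$ whose cokernel embeds via $k_*$ into $H_{n-2}(X_b)$. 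Surjectivity of $k_*|_{\ker \partial}$ is the most delicate step: it relies on the standard Picard--Lefschetz fact that monodromy acts trivially on $H_q(X_b)$ for $q \ne n - 1$, which together with \lemref{lem:main} forces the connecting map $H_{n-2}(X_b) \to H_{n-1}(Y_+)$ of the pair $(Y, Y_+)$ to vanish. Lastly, $k_*$ is identified with intersection with $X_b$ through the Thom-type isomorphism $H_n(Y, Y_+) \cong H_{n-2}(X_b)$.

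For the second sequence, I use the blowup formula for $\pi \colon Y \to X$. Since $X'$ has complex codimension $2$ and is cut out by two global linear forms, the exceptional divisor is trivially $E \cong X' \times \Proj^1$, giving
\[
0 \to H_{n-2}(X') \xrightarrow{\alpha} H_n(Y) \xrightarrow{\pi_*} H_n(X) \to 0,
\]
with $\alpha(\beta) = \beta \times [\Proj^1\text{-fibre}]$. Under the induced splitting $H_n(Y) \cong H_n(X) \oplus H_{n-2}(X')$, the inclusion $\iota_Y$ sends $\gamma \in H_n(X_b)$ to $(\iota_X \gamma,\, \gamma \cap X')$, and the candidate map $\mathcal{T}(Y) \to H_n(X)/\iota_* H_n(X_b)$ is induced by $\pi_*$. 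For surjectivity, lift $\alpha \in H_n(X)$ to some $\tilde \alpha \in H_n(Y)$; since $X' \subset X_b$ is a hyperplane section of the smooth $(n-1)$-fold $X_b$, the Lefschetz hyperplane theorem makes $i_{X'} \colon H_{n-2}(X') \to H_{n-2}(X_b)$ surjective, so I adjust $\tilde\alpha$ by some $\alpha(\beta)$ to enforce $\tilde\alpha \cap X_b = 0$. For the kernel, I choose a representative in $\ker \pi_* = \alpha(H_{n-2}(X'))$; writing it $\alpha(\beta)$, the $\mathcal{T}(Y)$-condition reads $\beta \in K$. Injectivity of $K \to \mathcal{T}(Y)$ uses hard Lefschetz on $X_b$: the composition $H_n(X_b) \xrightarrow{\cap X'} H_{n-2}(X') \xrightarrow{i_{X'}} H_{n-2}(X_b)$ is cap product with the hyperplane class, hence an isomorphism, so $\gamma \cap X' \in K$ forces $\gamma = 0$.

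The main difficulty throughout is the identification of abstract connecting and Gysin maps with the concrete geometric extensions $\tau$ and intersection operations, together with careful sign bookkeeping in the spirit of~\textcite{Lamotke_1981}.
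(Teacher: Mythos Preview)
Your overall strategy matches the paper's: both proofs combine the long exact sequence of the triple $(Y,Y_+,X_b)$, the identification $H_q(Y,Y_+)\cong H_{q-2}(X_b)$ via excision and the trivial bundle over $D_-$, and the blowup sequence $0\to H_{n-2}(X')\to H_n(Y)\to H_n(X)\to 0$. The paper packages these into a single commutative diagram and appeals to a diagram chase; you unfold the chase step by step. In that sense the approaches are the same.

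There is, however, a genuine gap in your surjectivity argument for the first sequence. You claim that the connecting map $H_n(Y,Y_+)\cong H_{n-2}(X_b)\to H_{n-1}(Y_+)$ vanishes because monodromy acts trivially on $H_{n-2}(X_b)$. Trivial monodromy lets you extend a class $\gamma\in H_{n-2}(X_b)$ over $D_+\setminus\Sigma$, but near each critical value the extension acquires boundary in $H_{n-1}$ of a local Milnor fibre, and there is no reason this vanishes; equivalently, working through the triple $(Y_+,Y_0,X_b)$ only shows that $[\gamma\times S^1]$ lies in the image of $H_{n-1}(X_b)\to H_{n-1}(Y_+)$, which is all of $H_{n-1}(Y_+)$ by the Main Lemma and tells you nothing. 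The paper sidesteps this entirely: it observes that the composite $H_{n-2}(X')\to H_n(Y)\to H_n(Y,X_b)\to H_{n-2}(X_b)$ is the map induced by the inclusion $X'\hookrightarrow X_b$, which is surjective by the Lefschetz hyperplane theorem (since $X'$ is a hyperplane section of $X_b$). This immediately produces, for any $c\in H_{n-2}(X_b)$, a preimage in $H_n(Y)$, hence in $\ker\partial\subset H_n(Y,X_b)$. You already invoke this surjectivity of $H_{n-2}(X')\to H_{n-2}(X_b)$ in your second sequence, so the fix is simply to use it in the first as well.

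A smaller point: your formula $\iota_Y(\gamma)=(\iota_X\gamma,\gamma\cap X')$ under the blowup splitting, and the subsequent appeal to hard Lefschetz for injectivity of $K\to\mathcal T(Y)$, needs more care about which splitting you mean. The paper's diagram chase handles this step implicitly via the commutativity of the big diagram, without invoking hard Lefschetz.
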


\begin{proof}
  First, we have an isomorphism
  \begin{equation}\label{eq:16}
    H_q(Y, Y_+) \simeq H_{q-2}(X_b),
  \end{equation}
  for any~$q$ \parencite[(3.3.1)]{Lamotke_1981}.
  This is given by excision: $(Y, Y_+) \simeq (Y_-, Y_0)$
  and the fact that~${f:Y_- \to D_-}$ is a trivial fibration since~$D_-$ does not contain critical values.
  The map~${H_q(Y, Y_+) \to H_{q-2}(X_b)}$ is the intersection with~$X_b$.

  Next, consider the commutative diagram
  \begin{equation}\label{diag:bigchase}
    \begin{tikzcd} [sep = .5 cm]
      && H_{n}(X_b)\arrow [d, "\iota_*"]\arrow[rd,"\iota_*"] & & & \\
      0 \arrow [r]& H_{n-2}(X') \arrow[r] \arrow[drr, dashed]& H_n(Y) \arrow [r, "\pi_*"] \arrow [d]& H_n(X) \arrow [r]& 0 &\\
      H_{n-1}(X_b) \arrow [r ,"\tau_\infty"]& H_{n}(Y_+,X_b) \arrow [r] \arrow[dr, "\partial"]& H_n(Y, X_b) \arrow [r] \arrow [d, "\partial"]& H_{n-2}(X_b) \arrow [r]& 0 &\\
      && H_{n-1}(X_b) & & & \,,
    \end{tikzcd}
  \end{equation}
  where:
  \begin{itemize}
    \item The first line comes from \textcite[(3.1.2)]{Lamotke_1981}.
    \item The second line is the long exact sequence of the triple $(Y, Y_+, X_b)$ where $H_q(Y, Y_+)$ is identified to $H_{n-2}(X_b)$ using~\ref{eq:16}. The last term is $0$ because of Lemma~\ref{lem:main}.
    \item The column is the long exact sequence of the pair $(Y, X_b)$.
    \item The dashed arrow is induced by inclusion. Indeed,
          let $A$ be a closed $n-2$-chain in $H_{n-2}(X')$.
          It is sent to $A\times\Proj^1$ in $H_n(Y)$, which is then included in $H_n(Y, X_b)$ and then~$H_n(Y, Y_+) \cap H_{n-2}(X_b)$,
          where the identification is given by intersecting with~$X_b$, which yields $A$ again.
  \end{itemize}
  Importantly, the dashed arrow $H_{n-2}(X') \to H_{n-2}(X_b)$ is surjective, by Lefschetz' hyperplane theorem, since~$X'$ is a hyperplane section of~$X_b$.
  Now, the two exact sequences to prove follow from diagram chasing in~\eqref{diag:bigchase}.
\end{proof}

\subsection{Projective complete intersections}

From now on, we assume that~$X \subseteq \mathbb{P}^N$ is a complete intersection.
In particular, all the homology groups that appear are free, so the exact sequences split, due to the following lemma which gathers some well known facts.

\begin{lemma}\label{torsion_free}
  Let $V$ be a $n$-dimensional smooth complete intersection of $\Proj^{N}$.
  \begin{enumerate}[(i)]
    \item For~$k < n$, the inclusion~$V\hookrightarrow \mathbb{P}^N$ induces isomorphism~$H_k(V)\simeq H_k(\Proj^N)$ and~$H^k(V)\simeq H^k(\Proj^N)$.
    \item All the homology groups~$H_k(V, \Z)$ are free.
    \item For~$k < n$ even, $H_{k}(V)$ is generated by the homology class $\frac{1}{\deg V}[V \cap L]$ where~$L$ is a projective subspace of complex codimension~$n-\frac k2$.
    \item For~$n < k\leq 2n$ even, $H_k(V)$ is  is generated by the homology class $[V \cap L]$ where~$L$ is a projective subspace of complex codimension~$n-\frac k2$.
  \end{enumerate}
\end{lemma}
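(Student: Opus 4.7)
The plan is to derive all four parts from two classical tools: the Lefschetz hyperplane theorem for smooth complete intersections, and Poincaré duality together with the universal coefficient theorem. Part~(i) is directly the statement of Lefschetz's hyperplane theorem for smooth complete intersections, which yields isomorphisms $H_k(V, \Z) \cong H_k(\Proj^N, \Z)$ and $H^k(V, \Z) \cong H^k(\Proj^N, \Z)$ for $k < n$; as a byproduct, $H_k(V, \Z)$ is free for $k < n$.

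For~(ii), I would combine~(i) with Poincaré duality $H_k(V,\Z) \cong H^{2n-k}(V,\Z)$ and the universal coefficient theorem $H^m(V, \Z) \cong \operatorname{Hom}(H_m(V), \Z) \oplus \operatorname{Ext}(H_{m-1}(V), \Z)$. When $k > n$, the indices $m = 2n-k$ and $m-1$ both fall in the range where~(i) gives freeness, so $H^m(V)$ is free and hence so is $H_k(V)$. The delicate case is $k = n$: the $\operatorname{Ext}$ term still vanishes because $H_{n-1}(V)$ is free by~(i), so $H^n(V) \cong \operatorname{Hom}(H_n(V), \Z)$; Poincaré duality then exhibits $H_n(V)$ as isomorphic to its own free quotient, forcing its torsion subgroup to be trivial.

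For~(iii) and~(iv), the strategy is to interpret $[V \cap L] \in H_k(V, \Z)$ via the cap product. Let $h \in H^2(\Proj^N, \Z)$ be the hyperplane class and set $h_V = \iota^* h$, where $\iota : V \hookrightarrow \Proj^N$ is the inclusion. For a linear subspace $L \subset \Proj^N$ of complex codimension $n - k/2$ meeting $V$ transversally, one has $[V \cap L] = [V] \frown h_V^{n-k/2}$, and Bézout gives the normalisation $h_V^n = \deg V$ in $H^{2n}(V, \Z) \cong \Z$. In case~(iv), where $k > n$ and hence $2n-k < n$, part~(i) applied in cohomology shows $h_V^{n-k/2}$ is the image under $\iota^*$ of the generator $h^{n-k/2}$ of $H^{2n-k}(\Proj^N)$, hence itself a generator of $H^{2n-k}(V)$, so its Poincaré dual $[V \cap L]$ generates $H_k(V)$. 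In case~(iii), where $k < n$, I would instead pair $h_V^{n-k/2}$ with $h_V^{k/2}$: by~(i) the latter is a generator of $H^k(V)$, and the cup-product identity $h_V^{n-k/2} \cdot h_V^{k/2} = h_V^n = \deg V$ together with the perfectness of the Poincaré pairing $H^{2n-k}(V) \otimes H^k(V) \to H^{2n}(V) \cong \Z$ forces $h_V^{n-k/2}$ to equal $\deg V$ times a generator of $H^{2n-k}(V)$. Consequently $[V \cap L]$ equals $\deg V$ times a generator of $H_k(V)$, and the claim follows by dividing.

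The main obstacle I anticipate is the freeness of $H_n(V, \Z)$ in~(ii): the middle-dimensional homology is the only case not directly covered by Lefschetz, and the self-duality trick via universal coefficients (using that $H_{n-1}(V)$ is already free) is the cleanest route I know. Everything else boils down to Bézout's theorem and the perfect Poincaré pairing applied to powers of $h_V$.
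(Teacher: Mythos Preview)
Your proposal is correct and tracks the paper's proof closely for parts~(i), (ii), and~(iv): Lefschetz's hyperplane theorem for~(i), Poincaré duality combined with the universal coefficient theorem (using that $H_{n-1}(V)$ is already free) for~(ii), and the umkehr map---equivalently, Poincaré dual of the pullback $\iota^*$---for~(iv). For part~(iii) you take a slightly different route: the paper pushes $[V\cap L]$ forward via the isomorphism $\iota_*: H_k(V)\to H_k(\Proj^N)$ and uses that $[V]=\deg(V)\cdot[\text{linear subspace}]$ in $H_{2n}(\Proj^N)$ to see the factor $\deg V$, whereas you stay inside $V$ and use the perfectness of the Poincaré pairing $H^{2n-k}(V)\otimes H^k(V)\to\Z$ together with $h_V^{n}=\deg V$. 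Both arguments are short and equivalent in strength; your version has the mild advantage of treating~(iii) and~(iv) uniformly via powers of $h_V$, while the paper's version makes the geometric source of the factor $\deg V$ (Bézout in $\Proj^N$) slightly more transparent.
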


\begin{proof}
  The first point is a consequence of
  Lefschetz's hyperplane theorem.
  Poincaré duality implies~$H_{k}(V)\simeq H_{2N-2n+k}(\Proj^{N})$ for~$k > n$.
  In particular, $H_k(V)$ is free for any~$k\neq n$.
  For the second point, see \textcite[\S 2.2]{Hirzebruch_1956}.
  For convenience, we recall the main line of the argument.
  By the first point and Poincaré duality, it only remains to check that~$H_n(V)$ is free.
  By the universal coefficient theorem \parencite[Corollary~3.3]{Hatcher_2002}, we have
  \begin{equation}
    H_n(V)\simeq H^n(V) \simeq \operatorname{Free}(H_n(V)) \oplus \operatorname{Tor}(H_{n-1}(V)) = \operatorname{Free}(H_n(V)),
  \end{equation}
  where Free denotes the free part and Tor the torsion part.

  For the third point, consider the inclusion~$V\hookrightarrow \mathbb{P}^N$,
  which, by the first point, induces an isomorphism~$H_k(V) \simeq H_k(\mathbb{P}^N)$.
  It maps a linear section of~$V$ by a projective subspace~$L^{n-\frac k2}$ of codimension~$n-\frac k2$
  to the class of~$V\cap L$ in~$\mathbb{P}^N$. But in~$\mathbb{P}^N$, $V$ is homologous to~$\deg(V) L^{N-n}$
  so~$[V\cap L]$ is homologous to~$\deg(V) L^{N - \frac k2}$.
  So in~$H_k(V)$, $[V \cap L]$ is divisible by~$\deg(V)$ and the quotient is a generator.

  For the last point, we consider the \emph{umkehr} homorphism~$i_!: H_{2N - 2n +k}(\mathbb{P}^N) \to H_{k}(V)$
  obtained by Poincaré duality from the morphism~$H^{2n-k}(\mathbb{P}^N) \to H^{2n-k}(V)$ induced by inclusion.
  As the latter is an isomorphism by the first point, $i_!$ is an isomorphism too.
  We check easily that it maps a projective subspace $L^{n-\frac k2}$, which generates~$H_{2N - 2n +k}(\mathbb{P}^N)$, to the linear section~$V \cap L^{n-\frac k2}$ of~$V$.
\end{proof}

In the case of complete intersections, it is convenient to study the homology of~$X$ without the part coming from the homology of~$\mathbb{P}^N$.
In particular, when~$n$ is even, the homology group~$H_n(X)$ contains the class of a linear section~$h$ of~$X$ by a codimension~$\frac n2$ linear space.
So we define the primitive homology
\begin{equation}
  \label{eq:2}
  \PH_n(X) =
  \begin{cases}
    H_n(X)/\mathbb{Z} h & \text{ if $n$ is even}\\
    H_n(X) &  \text{ if~$n$ is odd}.
  \end{cases}
\end{equation}
In the case where~$n$ is odd, we define~$h = 0 \in H_n(X)$.

Since~$X_b$ is a $n-1$-dimensional complete intersection, the space~$H_n(X_b)$ is generated by a linear section (or zero if~$n$ is odd), which we also denote by~$h$.
In particular, the second exact sequence in Theorem~\ref{lem:TY} now reads
\begin{equation}\label{eq:3}
  0 \to K \to \mathcal{T}(Y) \to \PH_n(X) \to 0.
\end{equation}

We can also be slightly more precise about~$H_n(Y)$ and obtain a decomposition into a direct sum.
The decomposition is not canonical but when we will study the intersection product, natural choices will appear.

\begin{theorem}\label{thm:middle_hom_Y}
  If~$X \subseteq \Proj^N$ is a complete intersection,
  then the middle homology of the modification~$Y$ is given (noncanonically) by
  \begin{equation*}
    H_n(Y) \simeq \mathcal{T}(Y) \oplus H_n(X_b) \oplus H_{n-2}(X_b).
  \end{equation*}
\end{theorem}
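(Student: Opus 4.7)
The plan is to turn the exact sequence from Theorem~\ref{lem:TY} into a direct-sum decomposition by splitting twice. First, since $X_b$ is a smooth complete intersection of complex dimension $n-1$, Lemma~\ref{torsion_free} ensures $H_{n-2}(X_b)$ is free, so the short exact sequence
$$0 \to \mathcal{T}(Y) \to H_n(Y)/\iota_* H_n(X_b) \to H_{n-2}(X_b) \to 0$$
splits non-canonically, yielding $H_n(Y)/\iota_* H_n(X_b) \simeq \mathcal{T}(Y) \oplus H_{n-2}(X_b)$.

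Next, I would lift this to a decomposition of $H_n(Y)$ itself by splitting the tautological sequence
$$0 \to \iota_* H_n(X_b) \to H_n(Y) \to H_n(Y)/\iota_* H_n(X_b) \to 0.$$
It is enough to know that $H_n(Y)$ is torsion-free, since then the quotient is torsion-free as well and the sequence splits. For this I would appeal to the blowup structure: $\pi : Y \to X$ is the blowup of the smooth complete intersection $X$ along the smooth complete intersection $X' = X \cap A$, which has complex codimension $2$. The classical blowup formula gives $H_k(Y) \simeq H_k(X) \oplus H_{k-2}(X')$, and both summands are torsion-free by Lemma~\ref{torsion_free}. Once the splitting is granted, I may identify $\iota_* H_n(X_b)$ with $H_n(X_b)$ provided $\iota_*$ is injective.

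Injectivity of $\iota_* : H_n(X_b) \to H_n(Y)$ is trivial when $n$ is odd, since then $H_n(X_b) = 0$ by Poincaré duality and Lefschetz applied to the complete intersection $X_b$. When $n$ is even, $H_n(X_b) \simeq \mathbb{Z}$ is generated by a complex-codimension-$n/2$ linear section $h$ of $X_b$; its image $\pi_* \iota_* h \in H_n(X)$ is the class of the same linear section, now regarded as a subvariety of $X$, which is non-zero because a linear section of the appropriate dimension of a complete intersection represents a non-trivial class in the (free) middle homology. Assembling the three pieces gives
$$H_n(Y) \simeq H_n(X_b) \oplus \mathcal{T}(Y) \oplus H_{n-2}(X_b),$$
as claimed. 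The principal ingredient beyond Theorem~\ref{lem:TY} and Lemma~\ref{torsion_free} is the blowup formula for integral homology, which is what guarantees the splittings over $\mathbb{Z}$; I expect this to be the only non-routine step, everything else being a short diagram chase.
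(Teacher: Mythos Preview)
Your approach is the paper's: split the first exact sequence of Theorem~\ref{lem:TY} using freeness of $H_{n-2}(X_b)$, check that $\iota_*$ is injective by composing with $\pi_*$ and tracking the linear section, then split off $\iota_* H_n(X_b)$ from $H_n(Y)$. The paper does exactly this in three sentences, concluding with ``as all these groups are free, we obtain the claim'' and without invoking the blowup formula explicitly.

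One step in your write-up is incorrect as stated. You argue that ``it is enough to know that $H_n(Y)$ is torsion-free, since then the quotient is torsion-free as well.'' Torsion-freeness does not descend to quotients (think of $\mathbb{Z} \twoheadrightarrow \mathbb{Z}/2\mathbb{Z}$). To split $0 \to \iota_* H_n(X_b) \to H_n(Y) \to H_n(Y)/\iota_* H_n(X_b) \to 0$ you need the \emph{quotient} to be free; since you already identified it with $\mathcal{T}(Y)\oplus H_{n-2}(X_b)$, the real issue is freeness of~$\mathcal{T}(Y)$, or equivalently primitivity of $\iota_* h$ in $H_n(Y)$ when $n$ is even. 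The blowup formula you cite gives freeness of $H_n(Y)$, which is helpful but not by itself enough. A clean fix is to exhibit a class in $H_n(Y)$ whose intersection with $\iota_* h$ equals~$1$: take a suitable linear section of $X'$ crossed with $\mathbb{P}^1$ (in the surface case this is exactly the computation $\langle h, E_1\rangle = 1$ of Lemma~\ref{lemma:intersection_product}). To be fair, the paper's own sentence ``all these groups are free'' glosses over the same point; your argument and the paper's are equally terse here, but your stated justification is the one that does not hold.
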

\begin{proof}
  Since $H_{n-2}(X_b)$ is free, the first exact sequence in Theorem~\ref{lem:TY} splits.
  Moreover, the map~$\iota : H_{n}(X_b) \to H_n(Y)$ is injective (because after composition with~$\pi_*$, we obtain the inclusion map~$H_n(X_b) \to H_n(X)$ which is injective since it maps a linear section to a linear section).
  As all these groups are free, we obtain the claim.
\end{proof}

\section{Computation of periods}
\label{sec:algorithms}

Let~$n \geq 1$ and let~$X \subseteq \Proj^{n+1}$ be a smooth hypersurface
defined by a homogeneous polynomial~$P \in \C[x_0,\dotsc,x_{n+1}]$ of degree~$d$.
In this section we describe an algorithm for computing the period matrix of~$X$.
When~$n$ is even, the homology group~$H_n(X)$ contains the class of a linear section of~$X$ by a codimension~$\frac n2$ linear space,
which we deal with separately, and similarly in cohomology.

We consider the \emph{primitive De Rham cohomology}, denoted~$\PHdr^n(X)$,
which is a subspace of~$\Hdr^n(X)$ (with coefficients in~$\mathbb{C}$) isomorphic to~$\Hdr^{n+1}(\mathbb{P}^{n+1} \setminus X)$,
and the \emph{primitive homology}, denoted~$\PH_n(X)$, defined in \eqref{eq:2}.
When~$n$ is odd, we have~$\Hdr^n(X) = \PHdr^n(X)$ and~$H_n(X) = \PH_n(X)$.
When~$n$ is even, $\PHdr^n(X)$ is a codimension~1 subspace of~$\Hdr^n(X)$, which is explicitely described by Griffiths--Dwork reduction,
and~$\PH_n(X)$ is the quotient space~$H_n(X)/\mathbb{Z} h$, where~$h$ is the homology class of a linear section of~$X$.
A \emph{primitive period matrix} for~$X$
is the matrix of the pairing~$\PH_n(X) \times \PHdr^n(X) \to \mathbb{C}$, induced by integration,
with a given explicit $\mathbb{C}$-basis of~$\PHdr^n(X)$ and some implicit $\mathbb{Z}$-basis of~$\PH_n(X)$. 
We show this pairing is well defined in Section~\ref{sec:de-rham-middle}.

The algorithm proceeds by induction on the dimension of~$X$.
Let~$H \subseteq \Proj^{n+1}$ be a generic hyperplane.
A primitive period matrix of~$X$ is computed
from a primitive period matrix of~$X \cap H$ (as an hypersurface in~$H \simeq \Proj^n$).
To this end, we choose a pencil of hyperplanes~$\left\{ H_t \right\}_{t\in \Proj^1}$ and a base point~$b$ such that~$H_b = H$
and which induces a \emph{Lefschetz fibration}, as defined in Section~\ref{sec:lefschetz-fibrations}.

Up to a change of coordinates,
we may assume that the pencil is given by $H_t = V(x_{n+1} - t x_0)$.
As in the previous sections,
we consider the rational map~$f : X \dashrightarrow \Proj^1$ given by~$[\mathbf x] \mapsto [x_0:x_{n+1}]$.
We consider the blowup $\pi : Y \to X$ of~$X$ along the base locus~$X' = X\cap V(x_0,x_1)$ of the pencil.
The composition~$f \circ \pi : Y \to \Proj^1$ extends to a regular map on~$Y$, also denoted~$f$.
The fibre~$X_t \overset{\text{def}}{=} f^{-1}(t)$ is isomorphic to the intersection~$X\cap H_t$.
The set of critical values is denoted~$\Sigma$, it is the set of all~$t$ such that~$X_t$ is singular.

\begin{wrapfigure}{r}{.4\linewidth}
  \vspace*{-3ex}
  \begin{center}
    \resizebox{\linewidth}{!}{
      \tikzfig{algorithm}
    }
  \end{center}
  \vspace*{-6ex}
\end{wrapfigure}

The main steps of the algorithms are:
\begin{enumerate}
  \item derive the action of monodromy of $\pi_1(\mathbb{P}^1\setminus\Sigma)$ on $H_{n-1}(X_b)$ from the primitive period matrix of $X_b$, see Section~\ref{sec:comp-monodr-matr};
  \item compute a basis of homology of $H_n(Y)$ from the action of monodromy, see Section~\ref{sec:comp-periods};
  \item compute a primitive period matrix of~$X$ by integrating the varying periods of~$X_t$ in an appropriate way, see Section~\ref{sec:comp-periods}.
\end{enumerate}
The starting point of this induction is the case of 0-dimensional varieties, where a description of homology and the effective period matrix can be obtained directly.
This case is treated in Section~\ref{sec:dim0init}.
From then on, steps $(1)$, $(2)$, $(3)$ allow to recover the effective period matrix of a curve, another iteration the one of surfaces, and so on.

\subsection{Numerical analytic continuation}
\label{sec:numer-analyt-cont}

To begin with, we briefly present the algorithms underlying our numerical computations.
Consider a linear differential system, in the complex plane, with rational coefficients
-- that is an equation
\begin{equation}
  \label{eq:de}
  Y'(t) = A(t) Y(t),
\end{equation}
where~$A(t) \in \mathbb{C}(t)^{r\times r}$,
and~$Y$ is a unknown vector or matrix of functions.
A point~$t\in\mathbb{C}$ is \emph{ordinary} if~$A$ is continuous at~$t$ and \emph{singular} otherwise.
At any ordinary point~$b\in \mathbb{C}$, there is a uniquely determined $r\times r$ solution matrix~$Y_b$ with~$Y_b(b) = I_r$
\parencite[Theorem~3.1]{Haraoka_2020}.
Let us call it the \emph{fundamental solution at~$b$}.
Any other solution matrix~$\tilde Y$ of~\eqref{eq:de} in a neighbourhood of~$b$ can be written as~$\tilde Y(t) = Y_b(t) U$ for some constant matrix~$U$.

Consider a continuous path~$\gamma : [0,1]\to \mathbb{C}$ which avoids singular values.
From the computational point of view, we assume that~$\gamma$ is polygonal with ordinary vertices in $\mathbb{Q}[i]$ (results in the more general setting where the vertices are singular points or given numerically exist, but we will not need them).
The analytic continuation along~$\gamma$ of the fundamental solution at~$\gamma(0)$
gives a new solution, denoted~$\gamma_* Y$ in the neighbourhood of~$\gamma(1)$ \parencite[Theorem~3.2]{Haraoka_2020}.
In particular, there is a unique matrix~$\Lambda_\gamma \in \mathbb{C}^{r\times r}$, the  \emph{transition matrix along~$\gamma$},
such that
\begin{equation}
  \gamma_* Y = Y_{\gamma(1)} \Lambda_{\gamma}.
\end{equation}
Since~$Y_{\gamma(1)}(\gamma(1)) = \operatorname{id}$, we have $\Lambda_\gamma = \gamma_* Y( \gamma(1) )$.
The map~$\gamma \to \Lambda_\gamma$ is a morphism from the fundamental groupoid of~$\mathbb{C}\setminus\Sigma$
to~$\operatorname{GL}(\mathbb{C}^r)$: $\Lambda_\gamma$ depends on~$\gamma$ only up to homotopy, and if~$\gamma \eta$ is the composition of two paths (going through~$\eta$ first then~$\gamma$),
then~$\Lambda_{\gamma \eta} = \Lambda_{\gamma} \Lambda_\eta$.

\begin{theorem}[\cite{ChudnovskyChudnovsky_1990,VanDerHoeven_1999,Mezzarobba_2010}]
  \label{thm:numcont}
  Given~$p > 0$, we can compute an approximation of~$\Lambda_\gamma$ up to~$2^{-p}$ (for any norm on~$\mathbb{C}^{r\times r}$).
  When the differential system~\eqref{eq:de} and the path~$\gamma$ are fixed, and~$p\to \infty$,
  we need~$O( M(p (\log p)^2)) = p ^{1 +o(1)}$ bit operations, where~$M(n)$ is the complexity of~$n$-bit integer multiplication.
\end{theorem}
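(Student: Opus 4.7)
The plan is to reduce the computation of $\Lambda_\gamma$ to a short sequence of local computations on sub-arcs sitting strictly inside disks of convergence, and then to exploit binary splitting on the linear recurrence satisfied by the Taylor coefficients of the fundamental solution on each sub-arc.

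First, using the groupoid property $\Lambda_{\gamma\eta} = \Lambda_\gamma \Lambda_\eta$ and the fact that $\gamma$ is polygonal with vertices in $\mathbb{Q}[i]\setminus\Sigma$, I would split $\gamma$ into a bounded number of sub-arcs $\gamma_k,\dotsc,\gamma_1$, each joining two ordinary points $a,a'$ with $|a'-a|$ strictly less than $\operatorname{dist}(a,\Sigma)$. The number of pieces depends only on $\gamma$ and the system, not on~$p$, so it suffices to bound the cost of one sub-arc.

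Second, at each ordinary point $a$ the fundamental solution admits a convergent expansion $Y_a(t) = \sum_{n\ge 0} c_n (t-a)^n$ with $c_0 = I_r$, whose coefficients satisfy a linear recurrence with polynomial-in-$n$ coefficients obtained by matching coefficients in \eqref{eq:de} after clearing denominators in $A$. A standard Cauchy majorant argument bounds $\|c_n (a'-a)^n\|$ by a geometric sequence of explicit ratio $\rho < 1$; from this one deduces that $N = O(p)$ terms suffice for the partial sum evaluated at $a'$ to approximate $\Lambda_{\gamma_i} = Y_a(a')$ within~$2^{-p - O(1)}$.

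Third, and this is where the quasilinear complexity comes from, I would evaluate the truncated partial sum by binary splitting. Writing the recurrence in matrix form, one has to compute a product of $N$ transfer matrices whose entries are polynomials in $n$ of bounded size with coefficients in a fixed number field containing $a$, $a'$ and the coefficients of $A$. A balanced divide-and-conquer scheme computes this product in $O(\log N)$ levels, at each of which the bit-size of the integer matrices being multiplied roughly doubles. Summing the costs yields the announced $O(M(N(\log N)^2))$ bit operations, which is $p^{1+o(1)}$ when $N \asymp p$.

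The main obstacle is not the algorithm but producing rigorous end-to-end error bounds: one must simultaneously control the truncation error (via an explicit majorant depending on $A$), the rounding error propagated through the binary-splitting tree (typically carried in ball arithmetic), and the error compounded when multiplying the $\Lambda_{\gamma_i}$ together along $\gamma$. Packaging these bounds cleanly into a single effective procedure is the substance of the cited works \parencite{VanDerHoeven_1999,Mezzarobba_2010}.
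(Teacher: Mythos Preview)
The paper does not contain a proof of this statement: Theorem~\ref{thm:numcont} is quoted from the literature \parencite{ChudnovskyChudnovsky_1990,VanDerHoeven_1999,Mezzarobba_2010} and used as a black box. Your proposal is a faithful outline of the method developed in those references: path subdivision into sub-arcs lying within disks of convergence, Taylor expansion of the local fundamental solution whose coefficients obey a polynomial-coefficient linear recurrence, truncation at $N=O(p)$ terms justified by a Cauchy majorant, and evaluation of the resulting matrix product by binary splitting to reach the $O(M(p(\log p)^2))$ bound. Your closing remark that the delicate part is packaging rigorous error bounds (majorant series, interval or ball arithmetic through the binary-splitting tree, and error composition along the chain of transition matrices) is exactly right and is indeed the content of \textcite{VanDerHoeven_1999} and \textcite{Mezzarobba_2010}.
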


We will use this result in two different ways.
First, when~$\gamma$ is a loop, the transition matrix along~$\gamma$
is the monodromy matrix of the differential system~\eqref{eq:de} along~$\gamma$.
Second, we use this result to integrate the solutions of~\eqref{eq:de} along a path.
Let~$R(t) \in \mathbb{C}(t)^{1\times r}$ be a vector of rational functions.
Consider the differential system of dimension~$r+1$
\begin{equation}\label{eq:26}
  Z'= \left(
    \begin{array}{c|c}
      A & 0 \\\hline R & 0
    \end{array}
  \right) Z.
\end{equation}
The vector solutions are exactly of the form~$Z(t) = \left( Y(t), \ g(t) \right)^t$
where~$Y(t)$ is a solution of~\eqref{eq:de} and~$g(t)$ is a primitive integral of~$R(t) \cdot Y(t)$.
More precisely, the fundamental solution at a point~$b$ has the form
\begin{equation}
  Z_b(t) = \left(
    \begin{array}{c|c}
      Y_b(t) & 0 \\\hline
      \int_b^t R(t) \cdot Y_b(t) & 1
    \end{array}
  \right).
\end{equation}
The last row of the transition matrix~$\Lambda_\gamma$ associated to the augmented system~\eqref{eq:26}
is therefore $\int_\gamma R(t) \cdot Y_b(t)$ (and the coefficient~1).
This gives an algorithm for computing integrals over~$\gamma$ of rational function multiples of the coefficients of vector solutions of~\eqref{eq:de}.

As for the practical aspects, we used the implementation of numerical analytic continuation provided in Sagemath by~\textcite{Mezzarobba_2016}
in the \emph{ore\_algebra} package \parencite{KauersJaroschekJohansson_2015}. This package deals with scalar differential equations.
This has practical implications, but from the mathematical point of view, scalar differential equations and differential systems are equivalent \parencite[Chapter~2]{Haraoka_2020}.

\subsection{De Rham middle cohomology classes of a hypersurface}
\label{sec:de-rham-middle}

The understanding of the De~Rham cohomology of $X$ is made simpler by looking at the cohomology of the complement $X^\complement = \Proj^{n+1}\setminus X$ in projective space \parencites[\S2]{Griffiths_1969a}[\S 5.3]{CoxKatz_1999}.

There is a natural injective morphism $\res: \Hdr^{n+1}(X^\complement)\to \Hdr^n(X)$ called the {\em residue mapping}.
Its image is
\begin{equation}
  \PHdr^n(X) \eqdef \res \left( \Hdr^{n+1}(X^\complement) \right) = \left\{ \omega \in \st \int_h \omega = 0 \right\}.
\end{equation}
We also recall the {\em tubular} mapping $T :H_n(X)\to H_{n+1}(X^\complement)$ which sends a cycle to the boundary of a tubular neighbourhood of one of its representative.
These mappings are dual to each other:
for any~$\omega\in H^{n+1}(X^\complement)$ and $\gamma\in H_{n}(X)$,
\begin{equation}\label{eq:11}
  2\pi i \int_\gamma \res\omega = \int_{T(\gamma)}\omega.
\end{equation}
The kernel of~$T$ is generated by~$h$ and we have~$\PH_n(X) = \operatorname{coker}(T) = H_n(X) / \mathbb{Z} h$.
Equation~\eqref{eq:11} shows that the pairing $\PH_n(X) \times \PHdr^n(X)$ is well defined.

The cohomology classes in $H^{n+1}(X^\complement)$ have an explicit description.
Recall that~$P \in \mathbb{C}[x_0,\dotsc,x_{n+1}]$ denotes the defining polynomial of~$X$.
The algebraic differential $n+1$-forms on~$X^\complement$
can be written uniquely as
\begin{equation}
  \label{eq:6}
  \omega = \frac{A}{P^k} \Omega_{n+1},
\end{equation}
where~$\Omega_{n+1} = \sum_{i=0}^{n+1} (-1)^i x_i \ud x_1 \dotsb \ud x_{i-1} \ud x_{i+1} \dotsb \ud x_{n+1}$ is the projective volume form,
$k$ is a positive integer and~$A \in \C[\mathbf x]_{kd-n-2}$ is a homogeneous polynomial of degree~$k d - n - 2$.
Since the variety~$X^\complement$ is affine, its De Rham cohomology can be computed using algebraic forms directly \parencite{Grothendieck_1966}.
Explicitely, we have
\begin{equation}
  \Hdr^{n+1}(X^\complement) \simeq \frac{ \operatorname{Vect}_\C \left\{ \frac{A}{P^k} \Omega_{n+1} \st k\geq 0 \text{ and $A \in \C[\mathbf x]_{kd-n-2}$ }   \right\}}
  { \operatorname{Vect}_\C \left\{ \frac{\partial}{\partial x_i} \left( \frac{B}{P^k} \right) \Omega_{n+1} \st k\geq0\text{ and }0\le i \le n+1 \text{ and } B \in \C[\mathbf x]_{kd-n-1} \right\}}.
\end{equation}
This is the quotient of homogeneous rational functions, regular outside of~$X$, of degree~$-n-2$ modulo sums of partial derivatives
of homogeneous rational functions, regular outside of~$X$, of degree~$-n-1$.

In this representation, a basis of~$\Hdr^{n+1}(X^\complement)$
can be described in terms of the Jacobian ideal of~$f$, namely
\begin{equation}
  \label{eq:19}
  J = \left\langle \frac{\partial P}{\partial x_0}, \dotsc, \frac{\partial P}{\partial x_{n+1}} \right \rangle \subseteq \C[\mathbf x].
\end{equation}
We fix any monomial ordering on~$\C[\mathbf x]$.
A basis of the cohomology space is given by the forms~$\frac{m}{P^k} \Omega_{n+1}$,
where~$m$ is a monomial in the variables~$\mathbf x$ such that:
\begin{itemize}
  \item $k d = \deg(m) + n+ 2$;
  \item $m$ is not the leading monomial of any element of~$J$.
\end{itemize}
The process of computing the coefficients in this basis of a given equivalence class of a $n+1$-form on~$X^\complement$
is called \emph{Griffiths--Dwork reduction}. For more details, we refer to \textcite[\S 5.3]{CoxKatz_1999}.

\subsection{Gauss--Manin connection}
\label{sec:gauss-manin-conn}

Let~$P_t = P(x_0,\dotsc,x_n, t x_0)$, where~$P \in \mathbb{C}[x_0,\dotsc,x_{n+1}]$ is the defining polynomial of~$X$,
so that~$X_t$ identifies with~$V(P_t) \subset \mathbb{P}^n$.
Following Section~\ref{sec:de-rham-middle}, the residue mapping induces an isomorphism
\begin{equation}
  \PHdr^{n-1}(X_t) \simeq \frac{ \operatorname{Vect}_\C \left\{ \frac{A}{P_t^k} \Omega_{n} \st k\geq 0 \text{ and $A \in \C[ x_0,\dotsc,x_n]_{kd-n-1}$ }   \right\}}
  { \operatorname{Vect}_\C \left\{ \frac{\partial}{\partial x_i} \left( \frac{B}{P_t^k} \right) \Omega_{n} \st k\geq0\text{ and }0\le i \le n \text{ and } B \in \C[x_0,\dotsc,x_n]_{kd-n} \right\}}.
\end{equation}
By extending the scalars from~$\mathbb{C}$ to~$\mathbb{C}(t)$,
we can define the \emph{space of sections} of~$\PHdr^{n-1}(X_t)$
as the~$\mathbb{C}(t)$-vector space
\begin{equation}
   \mathcal{H} =  \frac{ \operatorname{Vect}_{\C(t)} \left\{ \frac{A}{P_t^k} \Omega_{n} \st k\geq 0 \text{ and $A \in \C(t)[ x_0,\dotsc,x_n]_{kd-n-1}$ }   \right\}}
  { \operatorname{Vect}_{\C(t)} \left\{ \frac{\partial}{\partial x_i} \left( \frac{B}{P_t^k} \right) \Omega_{n} \st k\geq0\text{ and }0\le i \le n \text{ and } B \in \C(t)[x_0,\dotsc,x_n]_{kd-n} \right\}}.
\end{equation}
For a given~$\beta \in \mathcal{H}$ and a generic~$t \in \mathbb{P}^1$, evaluation at~$t$ gives an element~$\beta(t)$ of~$\PHdr^n(X_t)$.
Because~$\frac{\partial}{\partial t}$ commutes with~$\frac{\partial}{\partial x_i}$, we see that differentiation with respect to the parameter~$t$ induces
a derivation~$\nabla$ of~$\mathcal{H}$,
called the \emph{Gauss--Manin connection}.

Let~$\beta \in \mathcal{H}$.
For~$\eta\in H_{n-1}(X_b)$, let~$\eta(t) \in H_{n-1}(X_t)$ be the cycle uniquely determined by transporting~$\eta$ in~$Y_t$, following a path in some simply connected neighbourhood of~$b$.
The Gauss--Manin connection is uniquely determined by the property
\begin{equation}\label{eq:14}
  \frac{d}{dt} \int_{\eta(t)} \beta(t) = \int_{\eta(t)} \nabla\beta(t).
\end{equation}

Using Griffiths--Dwork reduction, we can compute a basis of~$\mathcal{H}$, say~$\beta_1,\dotsc,\beta_s$,
and the matrix~$A(t) = (a_{ij}) \in \mathbb{C}(t)^{r\times r}$ of the Gaussin--Manin connection (for details on its computation, see~\textcite{BostanLairezSalvy_2013}), defined by
\begin{equation}\label{eq:15}
  \nabla \beta_i = \sum_{j} a_{ij} \beta_j.
\end{equation}
Typically, $\beta_i$ will be in the form~$\frac{m}{P_t^k} \Omega_n$,
for some monomial~$m$,
and so~$\nabla \beta_i$ will be given by the Griffiths--Dwork reduction of~$-k \frac{m}{P_t^{k+1}} \frac{\partial P_t}{\partial t} \Omega_n$.
We can also choose~$\beta_1$ to be a cyclic vector and~$\beta_i = \nabla^{i-1} \beta_1$ so that the differential system is encoded in a single scalar equation which is well adapted to the \emph{ore\_algebra} package (see Section~\ref{sec:numer-analyt-cont}).
We may assume that the evaluations~$\beta_i(b)$ yield a basis of~$\PHdr^n(X_b)$, as this will generically be true.

\subsection{Primitive period matrix of a 0-dimensional hypersurface}
\label{sec:dim0init}
We start with the simple case where~$X$ is a zero-dimensional variety in~$\mathbb{P}^1$.
The periods are directly given by residues of rational fractions.

More precisely, the middle homology group~$H_0(X)$ is freely generated by~$d$ points.
The cohomology space~$\Hdr^0(X)$ is the set of functions~$X\to \mathbb{C}$.
The linear section~$h$ is the sum of all points.
In particular~$\PHdr^0(X)$ is the set of functions~$r : X\to \mathbb{C}$ with~$\sum_{x\in X} r(x) = 0$.
A basis of the middle primitive cohomology space $\Hdr^1(X^\complement)$ is given by the rational forms
$\omega_k = \frac{x^ky^{d-k-2}}{P}\Omega_1$ for $0\le k\le d-2$.
The residue mapping~$\Hdr^1(X^\complement) \to H^0(X)$ is the classical residue
\begin{equation}
  \frac{A}{P} \Omega_1 \mapsto \left( z \in X \mapsto \operatorname{Res}_{z} \left( \frac{A}{P} \Omega_1 \right) \right),
\end{equation}
the tube map~$T : H_0(X) \to H_1(X^\complement)$ maps a point of~$X$ to a loop around it,
and Equation~\eqref{eq:11} is Cauchy's residue theorem.
Since, the sum of residues is zero, the image of the residue mapping is indeed included in~$\PHdr^0(X)$.

We choose~$d-1$ roots~$z_1,\dotsc,z_{d-1}$ of $P$ in~$X$ in the affine chart~$y=1$,
they give a basis of~$\PH_0(X)$ and a primitive period matrix of~$X$ is given by the coefficients
\begin{equation}
  z_i^j \frac{\partial P}{\partial x}(z_i)^{-1},
\end{equation}
for $1\leq i \leq d-1$ and~$0\leq j \leq d-2$.

\subsection{Computation of the monodromy matrices of a Lefschetz fibration}
\label{sec:comp-monodr-matr}

\subsubsection{Monodromy on the primitive homology}

We consider the monodromy action of~$\pi_1(\mathbb{P}^1 \setminus \Sigma)$ on~$H_{n-1}(X_b)$.
Let~$\ell$ be a loop~$\mathbb{P}^1 \setminus \Sigma$, starting from the base point~$b$.
As~$t$ runs along~$\ell$, $X_t$ is continously deformed and
there is a uniquely determined continuation~$\eta(t)$ of~$\eta$ in~$H_{n-1}(X_t)$.
The action of~$\ell$ on~$\eta$, denoted~$\ell_* \eta$ is defined as the determination of~$\eta(b)$
after~$t$ has travelled along~$\ell$.
It is clear that a linear section of~$X_b$ has trivial monodromy,
thus the monodromy action on $H_{n-1}(X_b)$ induces an action on~$\PH_{n-1}(X_b)$.

\subsubsection{Computation of a monodromy matrix given a path}
\label{sec:comp-monodr-matr-1}

Let~$\Pi(t)$ be a primitive period matrix of~$X_t$ defined by
\begin{equation}
  \label{eq:13}
  \Pi_{ij}(t) = \int_{\eta_{j}(t)} \beta_i(t),
\end{equation}
for some basis~$\eta_j$ of~$\PH_{n-1}(X_b)$,
and where the~$\beta_i(t)$'s form a basis of~$\mathcal{H}$, see Section~\ref{sec:gauss-manin-conn}.
It depends holomorphically on~$t$ (on some neighbourhood of~$b$).
Combining~\eqref{eq:14} and~\eqref{eq:15}, we obtain the first-order linear differential system
\begin{equation}\label{eq:1}
  \Pi'(t) = A(t) \Pi(t).
\end{equation}
In particular, the matrix~$\Pi$ extends holomorphically along the path~$\ell$
and gives another determination, denoted~$\ell_* \Pi$, of~$\Pi$ in a neighbourhood of~$b$.
Naturally,
\begin{equation}
  \ell_* \Pi_{ij}(t) = \int_{\ell_* \eta_{j}(t)} \beta_i(t).
\end{equation}
In particular, the matrix of the action of~$\ell$ on~$H_{n-1}(X_b)$ in the basis~$(\eta_j)$ is given by
\begin{equation}\label{eq:27}
  \operatorname{Mat}(\ell_*) = \Pi^{-1}(b) \cdot \ell_* \Pi(b) = \Pi^{-1}(b) \Lambda_\ell \Pi(b),
\end{equation}
where~$\Lambda_\ell$ is the transition matrix introduced in Section~\ref{sec:numer-analyt-cont},
associated to the differential system~\eqref{eq:1} and the loop~$\ell$.
It is possible to compute~$\Delta_\ell$ numerically with arbitrary precision and rigorous error bounds
using the differential system~\eqref{eq:1}, see Theorem~\ref{thm:numcont}.
Together with the data of~$\Pi(b)$, we compute~$\operatorname{Mat}(\ell_*)$.

\subsubsection{Computation of appropriate paths}
\label{sec:comp-appr-paths}

It only remains to compute a set of generators of $\pi_1(\mathbb{P}^1 \setminus \Sigma)$,
which we compute as piecewise linear paths.
To do this, we compute the Voronoi diagram of $\Sigma$ in~$\mathbb{C}$.
Each critical point $c_i\in \Sigma$ lies in a unique Voronoi cell. 
Up to adding additional points around the convex hull of $\Sigma$, we may assume that the boundary of that cell is a polygon that describes a loop~$\ell'_i$ around the critical point (although not pointed at $b$). 
We pick the loop to be anticlockwise.

For each $i$, we pick a vertex $v_i$ of $\ell_i'$.
We can then consider the Voronoi graph $V$, for which the vertices and edges are those of the Voronoi cells. 
We also add the basepoint $b$, and an edge connecting the basepoint to the closest other vertex in $V$.
We compute a subtree $T$ of $V$ covering all the $v_i$ and rooted at $b$. 
In $T$ there is a unique path $p_i$ connecting $b$ to a given $v_i$.
A simple loop around $c_i$ pointed at $b$ is then given by the composition $\ell_i = p_i^{-1} \ell'_i p_i$.

For the sake of computing the extension around the equator $\tau_\infty$ (see \eqref{eq:infinity_cycles} below), we need to order these paths so that the composition of them is the loop around $\infty$. 
We define a supertree $T'$ of $T$ by adding a child corresponding to $\ell_i'$ at $v_i$, for each $i$. 
For a given node of $T'$, we order its children in anticlockwise order starting from the parent.
Finally the ordering on the loops is simply the ordering induced by the prefix ordering of the nodes of $T'$.
This can be achieved with a depth-first search throughout $T$, illustrated in \figref{fig:depth_sort}.

\begin{figure}[ht]
    \centering
    \resizebox{0.8\linewidth}{!}{
        \tikzfig{depth_sort}
        }
       	\caption{The tree $T$ (bold) is a subtree of the Voronoi graph (dashed) that connects all the points on which the polygonal loops are pointed. 
	Assume we are visiting vertex $v$, coming from vertex $p$, and that two polygonal loops $\ell'_i$ and $\ell'_j$ are pointed at $v$. 
	$v$ has three neighbours $p$, $n_1$ and $n_2$. 
	The sorting algorithm will yield $j$, the result of visiting $n_1$, $i$, and finally the result of visiting $n_2$, in this order. 
	Applying this sorting algorithm from the vertex $b$ gives an order on the loops pointed at $b$ such that their composition is the loop around $\infty$.
	}
	\label{fig:depth_sort}
\end{figure}

\subsection{Computation of a homology basis}\label{sec:comp-hom-basis}

We seek a description of~$\PH_n(X)$ given the matrices of the monodromy action of~$\pi_1(\mathbb{P}^1\setminus \Sigma)$ on~$\PH_{n-1}(X_b)$.
Firstly, $\PH_n(X)$ identifies with a quotient of~$\mathcal{T}(Y)$, by~\eqref{eq:3}.
By computing~$\PH_n(X)$, we mean, first, computing~$\mathcal{T}(Y)$ and, second, computing the kernel of~$\pi_* : \mathcal{T}(Y) \to \PH_n(X)$.

\subsubsection{Basis of $\mathcal{T}(Y)$}

Recall the setting of Section~\ref{sec:monodromy-extensions}:
we have a generating set~$\ell_1,\dotsc,\ell_r$ of~$\pi_1(\mathbb{P}^1\setminus \Sigma, b)$
and we further assume that~$\ell_r \dotsb \ell_1 = 1$ is the loop at $\infty$.
Each~$\ell_i$ induces an automorphism of~$\PH_{n-1}(X_b)$, denoted~${\ell_i}_*$.
Thanks to the algorithm given in Section~\ref{sec:comp-monodr-matr},
we can compute the matrix~$M_i$ of~${\ell_i}_*$ with respect to some fixed basis~$\beta_1,\dotsc,\beta_s$ of~$\PH_{n-1}(X_b)$.

The linear section~$h$ (which is nonzero when~$n$ is even) is fixed by~${\ell_i}_*$
By the Picard--Lefschetz formula \eqref{picard_lefschetz_formula},
this implies that~$\langle h,\delta_i \rangle = 0$. 
In particular, $\langle h,h\rangle = \deg X\ne0$, and therefore $h$ is not a vanishing cycle.
These two observations show that monodromy action on the quotient is well defined, 
and the corresponding matrices still satisfy~$\operatorname{rk} M_i - I_s=1$.
Thus there are vectors~$d_i \in \mathbb{Z}^{s\times 1}$ and~$m_i \in \mathbb{Z}^{1\times s}$ such that
\begin{equation}
\label{eq:decomp_mono_mat}
  M_i = I_s + d_i  m_i \in \mathbb{Z}^{s\times s}.
\end{equation}
The vectors~$d_i$ and~$m_i$ are uniquely determined, up to sign, by~$M_i$.
The vector~$d_i$ is the coordinates of the vanishing cycle~$\delta_i$ (see~\S\ref{sec:monodromy-extensions}) in the basis~$\beta_1,\dotsc,\beta_s$ of~$\PH_{n-1}(X_b)$,
and~$m_i$ is the linear form~$\eta \mapsto (-1)^{\frac{n(n+1)}{2}} \langle \eta, \delta_i\rangle$, which is well defined on~$\PH_{n-1}(X_b)$ since~$\langle h,\delta_i \rangle = 0$.

The extension map~$\tau_{\ell_i} : H_{n-1}(X_b)\to H_n(Y_+, X_b)$ is described by~\eqref{eq:17} in terms of the linear maps~$\langle -, \delta_i \rangle$.
In particular, it factors through~$\PH_{n-1}(X)$, and
in the bases~$(\beta_j)_{1\leq j \leq s}$ for~$\PH_{n-1}(X_b)$ and~$(\Delta_i)_{1\leq i\leq r}$ for~$H_n(Y_+, X_b)$,
the matrix~$T_i$ of the induced map is given by
\begin{equation}
\label{eq:extension_matrix}
  T_i = 
  (-1)^{n-1}   \left(\begin{array}{ccc} &\mathbf 0& \\\hline &m_i&\\\hline &\mathbf 0& \end{array}\right)  \in \Z^{r\times s}\,,\text{ where $m_i$ is the $i$-th line.}
\end{equation}
Finally, the boundary map~$\delta : H_n(Y_+, X_b) \to H_{n-1}(X_b)$ mapping~$\Delta_i$ to~$\delta_i$
induces a map ${\tilde \delta : H_n(Y_+, X_b) \to \PH_{n-1}(X_b)}$.
The matrix~$B$ of~$\tilde \delta$
is given by
\begin{equation}
  \label{eq:9}
  B = \operatorname{Mat}(\tilde \delta) = \left(
    \begin{array}{c|c|c}
      & & \\
      d_1 & \dotsb & d_r \\
      & &
    \end{array}
  \right) \in \mathbb{Z}^{s\times r}.
\end{equation}
Recall that~$\tau_\infty$ is the extension map~$H_{n-1}(X_b) \to H_n(Y_+, X_b)$
along the equator, which is homotopically equivalent to the composition~$\ell_r \dotsb \ell_1$.
By~\eqref{extension_composition}, it follows that
\begin{equation}
  \label{eq:10}
  \tau_\infty = \sum_{i=1}^r \tau_{\ell_i} \ell_{i-1*} \dotsb \ell_{1*},
\end{equation}
and therefore, in terms of the notations above, the matrix of the map~$\PH_{n-1}(X_b) \to H_n(Y_+, X_b)$ induced by~$\tau_\infty$ is
\begin{equation}
  \label{eq:infinity_cycles}
  T_\infty = T_1 + T_2 M_1 + T_3 M_2 M_1 + \dotsb + T_r M_{r-1} \dotsb M_1.
\end{equation}
With these matrices in hands, we obtain a basis of~$\mathcal{T}(Y)$ as~$\ker(B) / \operatorname{im}(T_\infty)$.

\subsubsection{Kernel of $\mathcal{T}(Y) \to \PH_n(X)$}
\label{sec:kernel-TY-to-PHX}

The final step to get $\PH_n(X)$ is to identify the cycles stemming from the blowup of the base locus of the fibration $Y\to X$.
We propose two methods.
The first one, which we implemented, uses the duality between~$\PH_n(X)$ and~$\PHdr^n(X)$ to obtain that
\begin{equation}
  \label{eq:20}
  \ker \left( \pi_* : \mathcal{T}(Y) \to \PH_n(X) \right) = \left\{ \gamma \in \mathcal{T}(Y) \st \forall \omega \in \PHdr^n(X), \int_{\pi_*(\gamma)} \omega = 0 \right\}.
\end{equation}
This amounts to computing the kernel of a full-rank matrix with complex coefficients, knowing that it is generated by integer-coefficient vectors.
This is numerically stable as the matrix we consider has full rank.
In practice, the coefficients are small and we can compute them using lattice-reduction algorithms.
However, we cannot certify this computation. 
In Section~\ref{sec:removing_exc_divs} we give a certified method for finding the coefficients of the blowups in terms of the thimbles in the case of surfaces.
This method generalises to higher dimensions but we did not implement it.

Since the matrix of the pairing~$\PH_n(X) \times \PHdr^n(X)$ is non-degenerate, the kernel of~$\pi_*$ is
exactly the left-kernel of the full-rank matrix~$P_{Y, X}$.
This is a sublattice of~$\mathcal{T}(Y)$, so~$\ker \pi_*$ is generated by integer-coefficient vectors.
We present an alternative rigorous way of computing~$\ker \pi_*$ in Section~\ref{sec:removing_exc_divs}.

\subsection{Computation of the effective period matrix}\label{sec:comp-periods}
We now have a basis of~$\mathcal{T}(Y)$
and, by~\eqref{eq:3}, $\pi : Y\to X$ induces a surjective map~$\mathcal{T}(Y) \to \PH_n(X)$.
In order to compute the period matrix of~$H_n(X)$, we first compute the matrix~$P_{Y,X}$ of the pairing $\mathcal{T}(Y) \times \PHdr^n(X) \to \C$
\begin{equation} 
	(\gamma,  \omega) \mapsto \int_{\pi_*(\gamma)} \omega.
\end{equation}
To recover a primitive period matrix of~$X$, it is then sufficient to compute a basis of~$\mathcal{T}(Y)/\ker \pi_*$
and extract from~$P_{Y,X}$ the relevant submatrix.

\subsubsection{Integrating periods}
\label{sec:thimble_integration}

Let $\omega\in \PHdr^n(X)$
and~$\gamma \in \mathcal{T}(Y)$.
By definition, $\gamma$ is the extension along some path~$\ell$ in~$\mathbb{P}^1\setminus \Sigma$
of some cycle~$\eta \in H_{n-1}(X_b)$.
Assume for now that we can write~$\pi^* \omega = \beta \wedge \ud f$, for some~$n-1$-form~$\beta$ on~$Y$. 
Then
\begin{equation}
  \int_{\pi_*\gamma} \omega = \int_{\gamma} \pi^*\omega = \int_{\tau_\ell(\eta)}\beta\wedge \ud f = \int_{\ell} \left(\int_{\eta_t} \beta|_{X_t} \right)\ud t,
\end{equation}
where~$\eta_t$ is the uniquely determined continuation of~$\eta \in H_{n-1}(X_t)$.
This expresses the period~$\int_{\pi_* \gamma} \omega$ as an integral of a period of the fiber~$X_t$, varying with t

The computations can be more explicitly carried out using the isomorphism between $\PHdr^n(X)$ and $\Hdr^{n+1}(X^\complement)$.
Recall that~$X^\complement$ denotes the complement~$\mathbb{P}^{n+1}\setminus X$.
Let~$Y'= Y\setminus Y_\infty$, where~$Y_\infty$ is the fibre of~$f : Y \to \mathbb{P}^1$ above the point at infinity.
By choice of coordinates, the hyperplane family~$(H_t)_{t\in \mathbb{P}^1}$
is given by~$H_t = V(x_{n+1} - t x_0)$, and
we check that
\[ Y' \simeq \left\{ (x,t) \in \mathbb{P}^n\times\C \st P_t(x_0,\dotsc,x_n) = 0 \right\}, \]
where~$P_t(x_0,\dotsc,x_n) = P(x_0,\dotsc,x_n, tx_0)$ is the equation of~$X_t$ in~$\mathbb{P}^n$.
Let~$Y^\complement$ be the complement of~$Y'$, that is
\[ Y^\complement = \left\{ (x, t) \in \mathbb{P}^n\times\C \st P_t(x_0,\dotsc,x_n) \neq 0 \right\}. \]
The map
\[ ([x_0:\dotsb:x_n], t) \mapsto \left[ x_0 : \dotsb : x_n : t x_0 \right] \]
induces a map~$\pi : Y^\complement \to X^\complement$.
The Leray residue maps~$H^{n+1}(Y^\complement) \to H_{n}(Y')$ and~$H^{n+1}(X^\complement) \to H^n(X)$
commute with~$\pi$.
Therefore, given a form~$\omega \in \PHdr^n(X)$,
which we write as~$\res( \frac{A}{P^k} \Omega_{n+1} )$ following Section~\ref{sec:de-rham-middle},
we have
\begin{align}
  \label{eq:8}
  \int_{\pi_*\gamma} \omega & = \int_{\gamma} \pi^* \res\left(  \frac{A}{P^k} \Omega_{n+1} \right)
                            = \int_{T(\gamma)} \pi^*\left(\frac{A}{P^k} \Omega_{n+1}\right)
                              = \int_{T(\gamma)} \frac{x_0 A_t}{P_t^k} \Omega_{n} \wedge \ud t \\
                            &=\int_{\ell} \left( \int_{T(\eta_t)} \frac{x_0 A_t}{P^k_t} \Omega_{n} \right) \ud t.
\end{align}
The form~$\frac{x_0 A_t}{P^k_t} \Omega_{n}$
defines an element of the space~$\mathcal{H}$ of sections of~$\PHdr^n(X_t^\complement)$.
In particular, we can write, in~$\mathcal{H}$,
\[ \frac{x_0 A_t}{P^k_t} \Omega_{n} = \sum_{i=1}^s r_i(t) \beta_i(t) \]
for some rational functions~$r_i(t)$, which we can compute explicitely using the Griffiths--Dwork reduction.
The vector~$Y'= (y_i(t))_{1\leq i\leq s}$ defined by~$y_i(t) = \int_{\eta_t} \beta_i(t)$
is a solution to the differential system~$Y'(t) = A(t) Y(t)$ coming from the Gauss--Manin connection, see~\eqref{eq:15} and~\eqref{eq:1}.
Moreover, \eqref{eq:8} gives
\begin{equation}\label{eq:5}
    \int_{\pi_*\gamma} \omega = \int_{\ell} \sum_{i=1}^s r_i(t) y_i(t) \ud t.
\end{equation}
We can compute~$Y(0)$ from the primitive period matrix of~$X_b$ (which we assume is given as input data),
and then we can use numerical analytic continuation to compute the integral in~\eqref{eq:5} efficiently (Section~\ref{sec:numer-analyt-cont}).

\subsection{Wrapup}\label{sec:wrapup}

Let us summarise the main steps of the algorithm for computing a primitive period matrix of
a projective hypersurface~$X$ in~$\mathbb{P}^{n+1}$
given:
\begin{itemize}
  \item the defining polynomial~$P(x_0,\dotsc,x_{n+1})$ of~$X$;
  \item a generic hyperplane family~$H_t = V(x_{n+1} - tx_0)$;
  \item a generic base point~$b$;
  \item a primitive period matrix~$\Pi(b)$ of~$X_b = X \cap H_b$, for a well-specified basis of~$\PHdr^n(X_b)$.
\end{itemize}

\begin{enumerate}
  \item\label{item:1} Using Griffiths--Dwork reduction, compute a basis~$\beta_1(t),\dotsc,\beta_s(t)$ of~$\mathcal{H}$ (as a $\mathbb{C}(t)$-linear space),
  the space of sections of~$\PHdr^{n-1}(X_t)$ defined in Section~\ref{sec:gauss-manin-conn},
  and the matrix~$A(t) \in \mathbb{C}(t)^{s\times s}$ of the Gauss--Manin connection over it (\S\S \ref{sec:de-rham-middle} and~\ref{sec:gauss-manin-conn}).

  \item If necessary, perform a change of basis so that the primitive period
  matrix~$\Pi(b)$ of~$X_b$ is given with respect to the
  basis~$(\beta_i(b))$ of~$\PHdr^n(X_b)$.

  \item \label{item:2}
  Compute the critical values~$\Sigma \subset \mathbb{P}^1$ and polygonal
  loops~$\ell_1,\dotsc,\ell_r$ generating the fundamental group of~$\mathbb{P}^1 \setminus \Sigma$ (\S \ref{sec:comp-appr-paths}).

  \item For each~$\ell_i$, integrate numerically the differential system~$\Pi'(t) = A(t) \Pi(t)$, with given initial value~$\Pi(b)$  along~$\ell_i$
  to obtain the value~${\ell_i}_* \Pi(b)$, with rigorous error bounds.
  Compute the matrix~$M_i = \Pi(b)^{-1} \cdot {\ell_i}_* \Pi(b)$. It is an integer matrix, so we only need to compute the coefficients of~$M_i$ with a coefficient-wise error bounded by~$\frac 12$ (\S \ref{sec:comp-monodr-matr-1}).

  \item\label{it:tubes} Using Equations~\eqref{eq:9} and~\eqref{eq:infinity_cycles}, compute the integer matrices~$B \in \mathbb{Z}^{s\times r}$ and~$T_\infty \in \mathbb{Z}^{r\times s}$. Compute bases of~$\ker(B)$ and $\operatorname{im}(T_\infty)$ in~$\bigoplus_{i=1}^r \mathbb{Z} \Delta_i$
  and a basis of a sublattice~$T \subseteq \ker(B)$ such that~$\ker(B) = T \oplus \operatorname{im}(T_\infty)$.
  This sublattice is isomorphic to~$\mathcal{T}(Y)$ (\S \ref{sec:comp-hom-basis}).

  \item\label{item:4} Compute a basis~$\omega_1,\dotsc,\omega_e$ of~$\PHdr^n(X)$, using Griffiths--Dwork reduction,
  and compute the integrals~$\int_{\pi^* (\Delta_i)} \omega_j$ (\S \ref{sec:thimble_integration}). This amounts to

  \begin{enumerate}
    \item Computing the coefficients of~$\omega_j|_{X_t}$ in the basis~$(\beta_i)$ of~$\mathcal{H}$;
    \item Computing a Picard--Fuchs differential equation for the partial integral~$y(t) = \int_{\delta_i(t)} \omega_j|_{X_t}$,
    using the coefficients above and the matrix~$A(t)$ of the Gauss--Manin connection, where~$\delta_i(t)$ is the vanishing cycle associated to the $i$-th critical value, transported in~$X_t$;
    \item Computing initial conditions~$y^{(k)}(b)$ using the matrix~$\Pi(b)$;
    \item\label{item:3} Computing~$\int_{\ell_i} y(t) \ud t$ using the method given in Section~\ref{sec:numer-analyt-cont}.
  \end{enumerate}

  \item With appropriate linear combinations, compute~$\int_{\pi^*(\tau_i)} \omega_j$ for some basis~$(\tau_i)$ of~$T \simeq \mathcal{T}(Y)$ computed at Step~\ref{it:tubes},
  which gives the matrix~$P$ of the pairing~$\mathcal{T}(Y) \times \PHdr^n(X)$.

  \item Numerically compute the left kernel of this matrix, this is a subgroup of~$\mathcal{T}(Y)$.
  Restrict this matrix to a supplement of this kernel, to obtain a primitive period matrix of~$X$.
\end{enumerate}

\subsubsection*{Complexity aspects}
Let~$d$ be the degree of~$P$.
We can perform Step~\ref{item:1} using~$d^{5n + O(1)}$ operations \parencite{BostanLairezSalvy_2013}.
The dimension~$s$ of~$\mathcal{H}$ is bounded by~$d^n$
and the entries of the~$s\times s$ matrix~$A$ are rational functions with numerators and denominators of degree at most~$n 3^n d^{n+1}$
\parencite[Proposition~8]{BostanLairezSalvy_2013}.

In Step~\ref{item:2}, the set~$\Sigma$ of critical values has~$d(d-1)^n$ elements (by genericity of the hyperplane family).
We can compute them by solving a system of~$n+1$ homogeneous equations of degree at most~$d$ in~$\mathbb{P}^{n+1}$.
The algebraic complexity is bounded by~$d^{O(n)}$ \parencite{GiustiLecerfSalvy_2001},
and we should also consider the cost of numerical approximation in the complex plane.
In practice, the computational cost is negligible. The polynomial systems we have can be considered as toy examples.

In Step~\ref{it:tubes}, we reduce to integrating a differential operator of order at most~$d^n$ and coefficients of degree~$d^{O(n)}$.
As an optimization, Steps~\ref{it:tubes} and~\ref{item:3} can be performed simultenously.
The complexity depends on the size of the differential operator, the desired precision
but also numerical parameters. No complete description is known. With respect to precision only, when everything else is fixed,
Theorem~\ref{thm:numcont} guarantees a quasilinear complexity.

\section{An explicit example: quartic surface}
\label{example}

Let $P = w^4 + x^4 + y^4+ z^4$ and define the Fermat quartic surface $X = V(P) \subset\Proj^{3}$. 
It is a smooth quartic projective surface. 
Thus it is a $K3$ surface, its middle cohomology group has rank~22 and its holomorphic subgroup has rank~1.
In this section, we give an explicit description of the computation of the periods of $X$.

A static SAGE worksheet reproducing the computations of this section can be found at \mbox{\emph{Fermat\_periods.ipynb}}\footnote{\url{https://nbviewer.org/urls/gitlab.inria.fr/epichonp/eplt-support/-/raw/main/Fermat_periods.ipynb}}. 
The computation of this notebook took a bit less than 18 minutes on a laptop.

\subsection{Constructing the Lefschetz fibration}
Let $\lambda = w$ and $\mu = 2x+3y+z$, and for $t\in \Proj^1$, define $H_t = V(\lambda - t\mu)$.
This defines a hyperplane pencil $\{H_t\}_{t\in \Proj^1}$ with axis $A = V(\lambda, \mu)$.
Then the modification of $Y$ along $X$ is the blowup of~$X$ along~$A$ which resolves the indeterminacies of the rational map~$\frac{\lambda}{\mu} : X \dashrightarrow \mathbb{P}^1$ into a map~$f : Y \to \mathbb{P}^1$.
The fibre~$f^{-1}(t)$ is isomorphic to~$X_t \eqdef X \cap H_t$.
The defining equation for $X_t$ when $t\ne \infty$ is
\begin{equation}
P_t = t^4(2x+3y+z)^4 + x^4+y^4+z^4\,.
\end{equation}
The map $f$ has 36 critical values $t_1, \dotsc, t_{36}$. We chose a basepoint $b$ and a value which will serve as $\infty$, both regular.

\subsection{Computing cohomology}
The primitive cohomology $PH^2(X)$ is computed thanks to the Griffiths--Dwork reduction (see Section \ref{sec:de-rham-middle}). $PH^2(X)$ has rank $21$ and a basis is given by the residues of rational forms 
\begin{equation}
\frac{1}{P}\Omega_3, \frac{A_1}{P^2}\Omega_3, \dots, \frac{A_{19}}{P^2}\Omega_3, \frac{w^2 x^2 y^2 z^2}{P^3}\Omega_3 \in H^{3}(\Proj^2\setminus X)\,,
\end{equation}
where~$A_1,\dotsc,A_{19}$ are all the monomials of degree~4 in~$w, x, y, z$ with exponents at most 2,
and~$\Omega_3 = w\ud x\ud y\ud z - x\ud w\ud y \ud z + y\ud w\ud x\ud z - z\ud w\ud x\ud y$ is the volume form of $\Proj^3$.
The 21 monomials~1, $w^2x^2y^2z^2$ and~$A_1,\dotsc,A_{19}$
are all the monomials whose degree is a multiple of~4 and that are not divisible by the leading term of any element of the Jacobian ideal of~$P$, which is in this case, the monomial ideal~$\langle w^3, x^3, y^3, z^3\rangle$.

Similarly, a basis of~$\mathcal{H}$, defined as the space of sections of~$\PH_1(X_t)$ (which, since~1 is odd, is just~$H_1(X_t)$) is given by the residues
of the forms
\begin{equation}\label{eq:12}
  \frac{x}{P_t}\Omega_2, \frac{y}{P_t} \Omega_2, \frac{z}{P_t}\Omega_2, \frac{z^5}{P_t^2}\Omega_2,
  \frac{yz^4}{P_t^2}\Omega_2, \text{ and } \frac{xz^4}{P_t^2}\Omega_2.
\end{equation}

\subsection{The action of monodromy on $H_1(X_b)$, thimbles, and recovering $H_2(Y)$}\label{monodromy_thimbles_H2Y}
As $X_b$ is a smooth quartic curve, it has genus~3 and the homology group~$H_1(X_b)$ is free of rank~6.
We assume we have a (primitive) period matrix of~$H_1(X_b)$
given in the basis~\eqref{eq:12} for~$\Hdr^1(X_b)$
and some basis~$\eta_1, \dots, \eta_6$ of $H_1(X_b)$ which needs not be specified.
We first aim at computing the action of $\pi_1(\Proj^1\setminus\{t_1\dots, t_{36}\}, b)$ on $H_1(X_b)$.

First we compute the simple direct loops $\ell_1, \dots, \ell_{36}$ around the critical values $t_1, \dots, t_{36}$, such that the composition $\ell_{36}\dots\ell_1$ is the indirect loop around $\infty$. 
Then for each $i$ we may compute the monodromy matrix $M_i\in GL_6(\Z)$ of the action of monodromy along $\ell_i$ on $H_1(Y_b)$ in the basis~$\eta_1, \dots, \eta_6$ (see Section~\ref{sec:comp-monodr-matr}).
For instance, we find

\begin{equation}M_1 =
  \begin{bmatrix}
1 & 0 & 1 & 1 & 2 & 2 \\
0 & 1 & -1 & -1 & -2 & -2 \\
0 & 0 & 2 & 1 & 2 & 2 \\
0 & 0 & -1 & 0 & -2 & -2 \\
0 & 0 & 1 & 1 & 3 & 2 \\
0 & 0 & -1 & -1 & -2 & -1
  \end{bmatrix} = I_6+\begin{bmatrix} 1\\-1\\1\\-1\\1\\-1 \end{bmatrix}
  \cdot \begin{bmatrix}0&0&1&1&2&2 \end{bmatrix}\,.
\end{equation}
This decomposition is the one of Equation~\eqref{eq:decomp_mono_mat}.
We choose a generator $d_i\in H_1(Y_b)$ of the image of $M_i-I$ (the choice is up to a sign). 
This is the vector of the coordinates of the vanishing cycle $\delta_i$ at $t_i$ in the basis of $H_1(X_b)$. 
We have for example $d_1 = (1,-1,1,-1,1,-1)$. 
We also pick a permuting cycle, i.e.\ a preimage $p_i$ of $d_i$ through $M_i-I$, so that $d_i = M_ip_i-p_i$. For instance $p_1 = (0,0,1,0,0,0)$.
We then have an explicit understanding of the thimble $\Delta_i \in H_2(Y_+, X_b)$ as the extension $\tau_{\ell_i}(p_i)$ of $p_i$ along $\ell_i$. 
These thimbles freely generate $H_2(Y_+, X_b)$, and we have the $36\times 6$ integer matrix $B$ of the border map
\begin{equation}
  \tilde\delta: \begin{cases} H_2(Y_+, Y_b)\to PH_1(Y_b)\\ \Delta_i\mapsto \delta_i \end{cases} \,,
\end{equation}
as per \eqref{eq:9}. This matrix has full column rank, and its kernel gives us a basis for $H_2(Y_+)/H_2(X_b)$, which has rank~30.

\begin{figure}
\hspace{-15em}
    \resizebox{0.75\textwidth}{!}{
      \[\left(\begin{array}{rrrrrrrrrrrrrrrrrrrrrrrrrrrrrrrrrrrr}
1 & 1 & 1 & 1 & 0 & \llap{$-$}2 & 1 & 1 & \llap{$-$}2 & 1 & 1 & 3 & 0 & \llap{$-$}2 & 2 & \llap{$-$}3 & 1 & 1 & 1 & 1 & 1 & \llap{$-$}3 & 0 & 1 & 0 & 1 & 0 & 0 & 1 & \llap{$-$}2 & 1 & 1 & 0 & 1 & 0 & 0 \\
\llap{$-$}1 & \llap{$-$}1 & \llap{$-$}2 & \llap{$-$}1 & 0 & 2 & \llap{$-$}1 & 0 & 1 & \llap{$-$}1 & 0 & \llap{$-$}2 & 1 & 1 & 0 & 1 & 0 & 0 & 0 & 0 & \llap{$-$}1 & 1 & 2 & 1 & 1 & 0 & 0 & 0 & \llap{$-$}1 & 1 & \llap{$-$}2 & \llap{$-$}1 & 0 & 0 & 1 & 0 \\
1 & \llap{$-$}1 & 1 & 0 & 2 & 0 & 0 & 0 & 0 & \llap{$-$}1 & \llap{$-$}1 & 1 & 1 & \llap{$-$}1 & 1 & \llap{$-$}1 & 1 & 1 & 1 & 1 & 1 & \llap{$-$}2 & 2 & 0 & 2 & 2 & 2 & 1 & 1 & \llap{$-$}1 & \llap{$-$}1 & \llap{$-$}1 & 1 & \llap{$-$}1 & \llap{$-$}1 & 1 \\
\llap{$-$}1 & \llap{$-$}1 & \llap{$-$}2 & 0 & 1 & 2 & \llap{$-$}1 & 1 & 0 & \llap{$-$}1 & 0 & \llap{$-$}1 & 2 & \llap{$-$}1 & 2 & \llap{$-$}1 & 1 & 1 & 1 & 1 & \llap{$-$}1 & \llap{$-$}1 & 4 & 2 & 2 & 1 & 1 & 1 & \llap{$-$}1 & 0 & \llap{$-$}3 & \llap{$-$}1 & 0 & 0 & 1 & 1 \\
1 & 2 & 2 & 0 & \llap{$-$}1 & \llap{$-$}2 & 2 & \llap{$-$}1 & 0 & 2 & 0 & 1 & \llap{$-$}3 & 1 & \llap{$-$}2 & 1 & \llap{$-$}1 & \llap{$-$}1 & \llap{$-$}1 & \llap{$-$}1 & 1 & 0 & \llap{$-$}4 & \llap{$-$}1 & \llap{$-$}2 & \llap{$-$}1 & \llap{$-$}1 & \llap{$-$}1 & 1 & \llap{$-$}1 & 3 & 1 & 0 & 0 & 0 & \llap{$-$}1 \\
\llap{$-$}1 & \llap{$-$}1 & \llap{$-$}1 & 0 & 0 & 1 & \llap{$-$}1 & 0 & 0 & \llap{$-$}1 & 0 & \llap{$-$}1 & 1 & 0 & 0 & 0 & 0 & 0 & 0 & 0 & \llap{$-$}1 & 1 & 1 & 0 & 0 & 0 & 0 & 0 & \llap{$-$}1 & 1 & \llap{$-$}1 & 0 & 0 & 0 & 0 & 0
\end{array}\right)
      \]
  }
  \caption{The $6\times 36$ matrix $B$ of the border map $\tilde\delta: H_2(Y_+, Y_b)\to PH_1(Y_b)$. Each column corresponds to the coordinates of a vanishing cycle at a critical point in the undetermined basis of $PH_{1}(X_b)$.}
\end{figure}

In order to recover $\mathcal{T}(Y)$, we need to quotient by the extensions of cycles in $H_1(Y_b)$ along the loop around $\infty$, which we recall is simply the composition $\ell_{36}\cdot\dots\cdot\ell_1$.
The matrix $T_i$ of the extension map $\tau_{\ell_i}: H_1(X_b)\to H_2(Y, X_b)$ in the bases $\beta_1, \dots, \beta_6$ of $H_1(Y_b)$ and $\Delta_1, \dots, \Delta_{36}$ of $ H_n(Y, X_b)$ is given by equation \eqref{eq:extension_matrix}. For instance, we have
\begin{equation}
T_1 = \begin{bmatrix}
0&0&1&1&2&2\\
0&0&0&0&0&0\\
\vdots&&&&&\vdots\\
0&0&0&0&0&0
\end{bmatrix}\in \Z^{36\times6}\,.
\end{equation}
Using equation \eqref{eq:infinity_cycles}, we may then compute the matrix $T_{\infty}$ of the extension map $\tau_\infty : H_1(X_b)\to H_2(Y, X_b)$.

\begin{figure}
\hspace{-15em}
    \resizebox{0.75\textwidth}{!}{
      \[\left(\begin{array}{rrrrrrrrrrrrrrrrrrrrrrrrrrrrrrrrrrrr}
0 & 1 & 0 & 1 & 0 & 0 & 0 & 0 & 1 & 0 & \llap{$-$}1 & 0 & 1 & 0 & 0 & 0 & 0 & 0 & 0 & 0 & 0 & 1 & 0 & 0 & 0 & 1 & 0 & \llap{$-$}1 & 0 & \llap{$-$}1 & 0 & 0 & 0 & 1 & 0 & 0 \\
0 & 1 & \llap{$-$}1 & 2 & 0 & 0 & 0 & 0 & 0 & 1 & \llap{$-$}2 & 0 & 1 & \llap{$-$}1 & \llap{$-$}1 & 1 & 1 & 1 & 1 & 1 & 1 & 2 & 0 & \llap{$-$}1 & 0 & 1 & 0 & \llap{$-$}2 & 0 & \llap{$-$}1 & 0 & \llap{$-$}1 & 1 & 2 & 1 & 0 \\
1 & 0 & \llap{$-$}1 & 1 & \llap{$-$}1 & 0 & 0 & 0 & 0 & 1 & \llap{$-$}1 & \llap{$-$}1 & 0 & 0 & 0 & 0 & 1 & 1 & 1 & 1 & 0 & 1 & 0 & \llap{$-$}1 & \llap{$-$}1 & 0 & 1 & \llap{$-$}1 & 0 & \llap{$-$}1 & 0 & \llap{$-$}1 & 1 & 1 & 0 & 0 \\
1 & \llap{$-$}1 & 0 & 0 & 0 & \llap{$-$}1 & \llap{$-$}1 & 0 & 0 & 0 & 1 & \llap{$-$}1 & \llap{$-$}1 & 1 & 0 & 0 & 1 & 1 & 1 & 1 & 0 & 0 & 0 & 0 & \llap{$-$}1 & \llap{$-$}1 & 0 & 0 & \llap{$-$}1 & \llap{$-$}1 & 0 & 0 & 0 & \llap{$-$}1 & 0 & 1 \\
2 & 0 & \llap{$-$}1 & 1 & 0 & \llap{$-$}1 & \llap{$-$}1 & 1 & 0 & 1 & 0 & \llap{$-$}2 & 0 & 0 & \llap{$-$}1 & 1 & 2 & 2 & 2 & 2 & 0 & 1 & 1 & \llap{$-$}2 & \llap{$-$}2 & 0 & 0 & \llap{$-$}2 & \llap{$-$}1 & \llap{$-$}2 & 1 & \llap{$-$}1 & 1 & 0 & 1 & 1 \\
2 & 1 & 0 & 0 & 0 & \llap{$-$}1 & 0 & 2 & 1 & 1 & 0 & \llap{$-$}2 & 1 & 0 & 0 & 1 & 1 & 1 & 1 & 1 & \llap{$-$}1 & 1 & 2 & \llap{$-$}2 & \llap{$-$}2 & 1 & 0 & \llap{$-$}2 & \llap{$-$}1 & \llap{$-$}2 & 1 & 0 & 1 & 0 & 0 & 0
\end{array}\right)
      \]
  }
  \caption{The transpose of the $36\times 6$ matrix $T_\infty$ of the extension map $\tau_\infty: H_1(Y_b)\to H_2(Y_+, Y_b)$. Each line corresponds to the coordinates of an extension along the equator in the basis of thimbles $\Delta_1, \dots, \Delta_{36}$.}
\end{figure}

We may then compute a supplement (as a $\Z$-module) of the image of $T_\infty$ in the kernel of $B$, which has rank $36-6-6=24$.
This gives a description of $\mathcal{T}(Y)$ as integer linear combinations of thimbles, given as $24$ vectors of $\Z^{36}$.
We may compute a basis $e_1, \dots, e_{24}$ of this space.
For instance we compute $e_2 = \Delta_2 - \Delta_{31} - \Delta_{35} - \Delta_{36}$.

\subsection{Integrating forms}
Let $\pi:Y\to X$ denote the canonical projection.
As we know the periods of $X_b$, we may compute the integral of the pullback of a primitive cohomology form $\res\omega_j \in \PHdr^2(X)$ along the thimbles
$\int_{\Delta_i}\pi^*\res\omega_j$ using methods detailed in Section \ref{sec:thimble_integration}. 
To recover the integral along an extension, it is sufficient to take the corresponding linear combinations of integrals along thimbles. 
For instance
\begin{equation}
\begin{split}
\int_{e_2}\pi^*\res\omega_j &= \int_{ \Delta_{2}}\pi^*\res\omega_j - \int_{ \Delta_{33}}\pi^*\res\omega_j - \int_{ \Delta_{35}}\pi^*\res\omega_j - \int_{ \Delta_{36}}\pi^*\res\omega_j\\
&=i1.718796454505093\dots \hspace{1em}\text{ with 139 digits of precision.}
\end{split}
\end{equation}
This allows us to recover the full pairing $\mathcal{T}(Y)\times \PHdr^2(X) \to \C$.

\subsection{Recovering $PH_2(X)$}
To recover $PH_2(X)$, we need to remove the $3$ differences of blowup cycles in $H_2(Y)$, i.e.\ $E_i-E_1$ for $i\in \{2,3,4\}$.
To identify them in $\mathcal{T}(Y)$ we can simply take the right kernel of the $21\times24$ period matrix $\left(\int_{e_j}\pi^*\res\omega_i\right)$.


\section{The lattice structure on $H_n(X)$}
\label{sec:further-algorithms}

In Section~\ref{sec:algorithms} we showed how to compute the matrix of the integration pairing $\PH_n(X) \times \PHdr^n(X) \to \mathbb{C}$ given a hypersurface~$X\subseteq \mathbb{P}^{n+1}$,
with respect to \emph{some} basis of~$\PH_n(X)$.
In the case of surfaces in~$\mathbb{P}^3$, which we restrict the discussion to in this section, this is enough to compute some algebraic invariants (most importantly the Picard rank), but not enough to recover finer invariants (such as the Néron-Severi lattice and the transcendental lattice).
The extra information we need is the intersection form on~$H_2(X)$, the fundamental class~$h$ of the hyperplane section~$H_2(X)$,
and the matrix of the projection~$H_2(X)\to \PH_2(X)$.
This section describes the computation of this extra information.
It also provides a rigorous way to compute the kernel of the map~$\mathcal{T}(Y) \to \PH_2(X)$, as an alternative to the heuristic method described in Section~\ref{sec:kernel-TY-to-PHX}.

For more details on how to exploit period computations with the intersection form to compute algebraic invariants of surfaces, see~\textcite{LairezSertoz_2019}.

\subsection{Exceptional divisors as thimbles}
\label{sec:removing_exc_divs}
The exceptional locus~$X'$ of the map~$X\dashrightarrow \mathbb{P}^1$ is the intersection of~$X$ with a (generic) line in~$\mathbb{P}^3$. It is therefore a set of~$d$ points~$s_1,\dotsc,s_d$, where~$d = \deg X$.
The modification~$\pi : Y \to X$ is the blowup of~$X$ along~$X'$.
Let~$E_1,\dotsc,E_d \subset Y$ denote the $d$ components of the exceptional divisor, that is~$E_k = \pi^{-1}(s_k)$. They are all isomorphic to~$\mathbb{P}^1$, and linearly independent.
The~$E_k$ define classes in~$H_2(Y)$ and we have the exact sequence \parencite[(3.1.2)]{Lamotke_1981}
\begin{equation}
  0 \to \bigoplus_{k=1}^d \mathbb{Z} E_k \to H_2(Y) \to H_2(X) \to 0.
\end{equation}
For any two~$E_k$ and~$E_j$, the intersection~$(E_k - E_j)\cap X_b$ is the difference of two points, which is homologous to~0 in $H_0(X_b)$.
Therefore, by Theorem~\ref{lem:TY}, the homology class of~$E_k - E_j$ comes from a uniquely determined element in~$\mathcal{T}(Y)$.
In particular, we obtain the exact sequence
\begin{equation}
  0 \to \bigoplus_{k=2}^d \mathbb{Z} (E_k - E_1) \to \mathcal{T}(Y) \to \PH_2(X) \to 0.
\end{equation}

We can compute the image of~$E_k - E_1$ in~$\mathcal{T}(Y)$ in terms of the Lefschetz thimbles as follows.
Let $p_1, \dots, p_r$ be non-intersecting paths in $\Proj^1$ connecting $b$ to the critical points $t_1, \dots, t_r$ respectively.
Consider $T= \cup_{i=1}^r p_i \subset \Proj^1$.
It defines an oriented tree covering the critical points of $f$.
Finally let $U = \Proj^1\setminus T$. It is a simply connected subset of $\Proj^1\setminus\Sigma$, above which~$f$ has no critical point.
From Thom's isotopy lemma \parencite{Mather_2012} applied to the pair $(Y, \pi^{-1}(X'))$
we obtain a trivialisation of the fibration $f^{-1}(U) \to U$ where the points $s_k$ are fixed in the fibres. In other words,
there is a homeomorphism~$\phi : f^{-1}(U) \to X_{b'} \times U$, where~$b'\in U$, such that the following diagram is commutative
\begin{equation}
  \label{eq:thom_iso}
  \begin{tikzcd}
    X' \times U \arrow[d, "\iota"]\arrow[dr, "\iota_2"]& \\
    f^{-1}(U)\arrow[r, "\phi"]\arrow[d, "f"]& X_b\times U\arrow[dl, "p_2"]\,,\\
    U
  \end{tikzcd}
\end{equation}
where $\iota$ and $\iota_2$ are inclusions.

Take a $1$-chain $\alpha_k$ in $X_{b'}$ such that $\partial \alpha_k = s_k- s_1$ (that is a path connecting $s_k$ to $s_1$ in~$X_{b'}$).
For $u\in U$, let~$\alpha_k(u) = \phi^{-1}( \left\{ u \right\}\times \alpha_k)$.
Note that
\begin{equation}\label{eq:22}
  \partial \alpha_k(u) =\iota_*(s_k,u) - \iota_*(s_1, u) = s_k - s_1.
\end{equation}

For $t\in T$ not a vertex, that is~$t \neq b$ and~$t\not\in \Sigma$, define $\alpha_k(t^+)$ and $\alpha_k(t^-)$ as the left and right limit of $\alpha_k(u)$ as $u\to t$ for $t\in T$ (where the direction is given by the orientation of $T$).
Since $s_k$ and $s_1$ are fixed,  $\partial(\alpha_k(t^+) - \alpha_k(t^-)) = 0$, and this chain defines a cycle $v_k(t)\in H_1(X_t)$.
Let~$v_{kj} \in H_{1}(X_b)$ be the limit of~$v_k(t)$ as $t \to b$ along the~$j$-th branch of~$T$.

\begin{lemma}\label{lem:path-monodromy}
  With the above notations, the cycle~$v_{kj} \in H_{1}(X_b)$ is a multiple of~$\delta_j$, the vanishing cycle associated to the critical value~$t_j$.
\end{lemma}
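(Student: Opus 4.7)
The plan is to identify $v_{kj}$, on the level of homology in $X_b$, with the ``monodromy jump'' $\phi_j(\tilde\alpha_k) - \tilde\alpha_k$, where $\phi_j$ is a diffeomorphism of $X_b$ realising the monodromy $\ell_{j*}$ around $t_j$ and $\tilde\alpha_k$ is a $1$-chain in $X_b$ from $s_1$ to $s_k$, and then to invoke the Picard--Lefschetz description of $\phi_j$ as a Dehn twist supported near $\delta_j$.

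For the first step, observe that the tree $T$ cuts $\mathbb{P}^1$ into the simply connected region $U$, so crossing the edge $p_j$ at an interior point $t$ from the $t^-$ side to the $t^+$ side is homotopic in $\mathbb{P}^1\setminus\Sigma$ (rel endpoints) to the loop $\ell_j$. Applied to the trivialisation~\eqref{eq:thom_iso}, this yields $\alpha_k(t^+) = \phi_{j,t}(\alpha_k(t^-))$ for some diffeomorphism $\phi_{j,t}$ of $X_t$ realising the monodromy along $\ell_j$. Passing to the limit $t \to b$ along $p_j$ and writing $\tilde\alpha_k = \alpha_k(b^-)$, we obtain
\[
  v_{kj} \;=\; \phi_j(\tilde\alpha_k) - \tilde\alpha_k \;\in\; H_1(X_b),
\]
which is a cycle because $\phi_j$ fixes $\partial\tilde\alpha_k = s_k - s_1$ (the points $s_1,\dotsc,s_d$ being fixed throughout the trivialisation). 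For the second step, the local Lefschetz model at $t_j$ (see~\S\ref{sec:vanishing-cycles} and \textcite[\S6]{Lamotke_1981}) lets us take $\phi_j$ to be a Dehn twist supported in an arbitrarily small tubular neighbourhood $N$ of $\delta_j$. For a generic hyperplane pencil, the points $s_1, s_k$ lie outside $N$; choosing $\tilde\alpha_k$ transverse to $\delta_j$, the cycle $\phi_j(\tilde\alpha_k) - \tilde\alpha_k$ is supported in $N$ and so represents a class in $H_1(N;\mathbb{Z}) \simeq \mathbb{Z}\delta_j$. This gives $v_{kj} \in \mathbb{Z}\delta_j$, with the explicit multiplicity being the algebraic intersection number $\langle \tilde\alpha_k, \delta_j\rangle$ (though the lemma only asks for divisibility).

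The main obstacle is the chain-level bookkeeping in the first step: the two trivialisations on either side of $p_j$ differ exactly by the monodromy around $t_j$, and one must match them with consistent orientation conventions so that the identity $\alpha_k(b^+) = \phi_j(\tilde\alpha_k)$ holds on the nose as chains, not merely in homology. Once this geometric dictionary is in place, the reduction to the Dehn twist picture and the conclusion via $H_1(N) \simeq \mathbb{Z}\delta_j$ are standard.
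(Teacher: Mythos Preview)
Your argument is correct and shares its core with the paper's: both identify $\alpha_k(t^+)-\alpha_k(t^-)$ with the monodromy jump around $t_j$ and then localise near the vanishing cycle. The paper, however, works at a point $t\in p_j$ close to the critical value $t_j$, observes that $\alpha_k(t^\pm)$ define relative cycles in $H_1(X_t, X_t\setminus B)$ for a small ball $B$ around the critical point, and invokes \textcite[(6.5.1)]{Lamotke_1981} --- which states that the extension of any such relative cycle around $t_j$ is a multiple of the thimble $\Delta_j$ --- to conclude that the boundary $\alpha_k(t^+)-\alpha_k(t^-)$ lies in $\mathbb{Z}\delta_j$. You instead transport everything to $X_b$ and argue directly from the Dehn twist model of $\ell_{j*}$, showing that $\phi_j(\tilde\alpha_k)-\tilde\alpha_k$ is supported in a tube $N$ with $H_1(N)=\mathbb{Z}\delta_j$; this is more self-contained and gives the explicit coefficient $\langle\tilde\alpha_k,\delta_j\rangle$, while the paper's route keeps the argument within the thimble and extension formalism already developed in Section~\ref{sec:effect-picard-lefsch}. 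The two are essentially the same local Picard--Lefschetz computation in different packaging.
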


\begin{proof}
  Consider a small enough ball~$B$ in~$Y$ around the critical point associated to the critical value~$t_j$.
  Let~$t \in T$ be very close to~$t_j$.
  The paths~$\alpha_k(t^+)$ and $\alpha_k(t^-)$ defines an element of~$H_1(X_t, X_t \setminus B)$.
  Following a loop from~$t$ around~$t_i$ transforms~$\alpha_k(t^-)$ into~$\alpha(t^+)$.
  By \parencite[(6.5.1)]{Lamotke_1981},
  the extension of~$\alpha_k(t^+)$ along a loop around~$t_j$ gives a multiple of the $j$-th thimble.
 In particular the boundary of this extension, which is just~$\alpha_k(t^+) - \alpha_k(t^-)$, is a multiple of the vanishing cycle~$\delta_j$.
  Since~$v_{kj}$ is the deformation of $\alpha_k(t^+) - \alpha_k(t^-)$ along the $j$-th branch, we obtain the claim.
\end{proof}

Let~$m_{kj} \in \mathbb{Z}$ such that~$v_{kj} = m_{kj} \delta_j$.

\begin{lemma}\label{lem:diff-exc-div}
  With the above notations, $E_k - E_1 = \sum_{j=1}^r m_{kj} \Delta_j$ in~$\mathcal{T}(Y)$.
\end{lemma}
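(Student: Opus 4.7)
The plan is to show $[E_k - E_1] = \sum_{j=1}^r m_{kj}[\Delta_j]$ directly in $H_2(Y_+, X_b)$ by constructing an explicit 3-chain whose boundary realises this identity modulo chains supported in $X_b$; the desired equality in $\mathcal{T}(Y)$ then follows, since $E_k - E_1\in\ker\partial$ and $\mathcal{T}(Y)$ is a quotient of $\ker\partial \subset H_n(Y_+, X_b)$.

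The 3-chain is built by lifting the 1-chain $\alpha_k$, with $\partial\alpha_k = s_k - s_1$, over the simply connected set $U = \Proj^1\setminus T$ via the Thom--Mather trivialisation $\phi$ of~\eqref{eq:thom_iso}. For small $\epsilon > 0$, take a closed $\epsilon$-thickening $T_\epsilon$ of $T$, set $U_\epsilon = \Proj^1 \setminus \operatorname{int}(T_\epsilon)$, and define $C_k^\epsilon \eqdef \phi^{-1}(\alpha_k \times U_\epsilon) \subset f^{-1}(U_\epsilon) \subset Y_+$. The product formula $\partial(\alpha_k\times U_\epsilon) = (s_k - s_1)\times U_\epsilon - \alpha_k \times \partial U_\epsilon$ splits $\partial C_k^\epsilon$ into two pieces. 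The first, sweeping the endpoints $s_k - s_1$ over $U_\epsilon$, lies in the exceptional divisors and converges to $E_k - E_1$ as $\epsilon \to 0$, because $f|_{E_k}$ is an isomorphism onto $\Proj^1$ whenever the base points of the pencil are simple. The second, sweeping $\alpha_k$ over $\partial U_\epsilon$, lives in the fibres above a loop hugging $T$, and has to be identified with $\sum_j m_{kj}\Delta_j$ up to chains in $X_b$.

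For this identification, fix a simply connected neighbourhood $V_j \subset \Proj^1$ of $p_j \cup \{t_j\}$ containing no other critical value, and restrict the construction to $f^{-1}(V_j)$. The two sides of $\partial T_\epsilon$ along branch $p_j$ contribute the sweep of the discrepancy $\alpha_k(t^+) - \alpha_k(t^-) = v_k(t)$ from $b$ toward $t_j$; by Lemma~\ref{lem:path-monodromy}, $v_k(t)$ continues $m_{kj}\delta_j$ along $p_j$, so this sweep has boundary $m_{kj}\delta_j$ in $X_b$. But $H_n(f^{-1}(V_j), X_b) \simeq \Z\cdot\Delta_j$ with $\partial\Delta_j = \delta_j$, so the local relative class is forced to be $m_{kj}\Delta_j$. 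Summing the local contributions over all $j$ and absorbing the residual boundary accumulating near $b$ into $X_b$ yields the desired equation in $H_2(Y_+, X_b)$.

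The main obstacle is the boundary analysis near each critical value $t_j$, where the trivialisation $\phi$ breaks down and one must rule out spurious contributions from the small arc of $\partial T_\epsilon$ around $t_j$. The local characterisation of $\Delta_j$ as the unique generator of $H_n(f^{-1}(V_j), X_b)$ (see Section~\ref{sec:monodromy-extensions}) is what sidesteps this delicate analysis: once the $X_b$-boundary of the local construction is shown to be $m_{kj}\delta_j$, the local class is determined, irrespective of the precise behaviour of the 3-chain at the apex of the thimble.
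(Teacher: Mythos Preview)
Your proof is essentially the paper's: both sweep $\alpha_k$ over $\mathbb{P}^1 \setminus T$ via the Thom--Mather trivialisation and read off the relation from the boundary of the resulting 3-chain; your $\epsilon$-thickened $C_k^\epsilon$ is a careful variant of the paper's $B = \overline{\phi^{-1}(\alpha_k \times U)}$, and your explicit treatment of the behaviour near each $t_j$ via the rank-one characterisation of $H_n(f^{-1}(V_j), X_b)$ makes precise a point the paper leaves implicit.

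One slip: the inclusion $f^{-1}(U_\epsilon) \subset Y_+$ is false, since $\infty \in U_\epsilon$ but $\infty \notin D_+$. The paper sidesteps this by taking $B$ as a 3-chain in $Y$ rather than $Y_+$ and concluding via the injection $\mathcal{T}(Y) \hookrightarrow H_n(Y)/\iota_* H_n(X_b)$ of Theorem~\ref{lem:TY}; you could do the same, or else restrict $U_\epsilon$ to $D_+$ and observe that the extra boundary piece over the equator lands in $\im\tau_\infty$ and hence vanishes in $\mathcal{T}(Y)$. Either fix is routine and leaves the argument intact.
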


\begin{proof}
  Consider in~$Y$ the 3-chain
  \begin{equation}
    B = \overline{\phi^{-1}(\alpha_k\times U)} = -\overline{\cup_{u\in U} \alpha_k(u)}.
  \end{equation}
  The border~$\partial B$ defines an element of~$H_2(Y)$.
  It decomposes into a part coming from the border of~$\alpha_k$ and another coming from the border of~$U$.
  The part coming from~$\partial \alpha_k$ is the closure of all~$\partial \alpha_k(u)$, that is~$E_k - E_1$, by~\eqref{eq:22}.
  The part coming from~$\partial U$ is $\cup_{t \in T} (\alpha_k(t^+) - \alpha_k(t^-))$.
  For a given edge~$p_j$ of~$T$, the union~$\cup_{t \in p_j} (\alpha_k(t^+) - \alpha_k(t^-))$
  is the extension along~$p_j$ of the cycle~$v_{kj}$.
  Since~$v_{kj}$ is a multiple of the vanishing cycle $m_{kj}\delta_j$, the boundary of~$\cup_{t \in p_j} (\alpha_k(t^+) - \alpha_k(t^-))$ is $m_{kj}\delta_j$.
  In particular this extension is a multiple of the Lefschetz' thimble~$\Delta_j$ with the same factor.
  
\end{proof}

An illustration of this construction is given in \figref{exceptional_divisors}. 
\begin{figure}[tp]
  \centering
  \begin{subfigure}[t]{0.42\linewidth}
    \tikzfig{exceptional_divisors}
    \caption{A 1-chain $\alpha_k$ connecting $s_0$ to $s_1$ in one of the fibres may have monodromy around the critical values $t_j$. Above $T$ the chain $\alpha_k(t^+)-\alpha_k(t^-)$ has no boundary, and thus represents a $1$-cycle.}
  \end{subfigure}
  \hspace{1em}
  \begin{subfigure}[t]{0.50\linewidth}
    \tikzfig{exceptional_divisors_2}
    \caption{We obtain a $3$-chain $B$ by extending $A$ to all of $\mathbb{P}^1 \setminus T$. The boundary of $B$ is the sum of $E_k - E_1$ with the $2$-chain $\cup_{t\in T} (\alpha_k(t^+)-\alpha_k(t^-))$.
    Since the intersection of this $2$-chain with $p_j$ has boundary only in $X_b$, $\alpha_k(t^+)-\alpha_k(t^-)$ is a multiple of the vanishing cycle~$\delta_j$.}
  \end{subfigure}

  \caption{The action of monodromy on relative homology and total transforms}
  \label{exceptional_divisors}
\end{figure}

\subsubsection{Monodromy action on relative homology}

Consider the relative homology space~$H_1(X_b, X')$, where~$X'$ is the indeterminacy locus of~$f$.
As in the previous section, let~$\alpha_k$ be a path from~$s_1$ to~$s_k$ in~$X_b$.
It follows directly from the long exact sequence of relative homology of the pair $(X_b, X')$ that
\begin{equation}
\label{relative_homology_decomp}
H_1(X_b, X')\simeq H_1(X_b)\oplus \bigoplus_{i=2}^d \mathbb{Z} \alpha_k.
\end{equation}
Following the construction in the previous section, we have a monodromy action of~$\pi_1(\mathbb{P}^1\setminus \Sigma, b)$
on~$H_1(X_b, X')$ which extends the monodromy action on~$H_1(X_b)$.

In view of Lemma~\ref{lem:diff-exc-div}, the problem of computing the exceptional divisors in~$\mathcal{T}(Y)$ reduces to the computation of the coefficients~$m_{kj}$,
which are determined by the monodromy action on~$H_1(X_b, X')$.

\begin{lemma}
  The integers~$m_{kj}$ in Lemma~\ref{lem:diff-exc-div} satisfy
  \[ {\ell_j}_* \alpha_k = \alpha_k + m_{kj} \delta_j. \]
\end{lemma}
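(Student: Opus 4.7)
The plan is to compute the monodromy action of $\ell_j$ on $\alpha_k \in H_1(X_b, X')$ using the same trivialization $\phi : f^{-1}(U) \to X_{b'} \times U$ from Thom's isotopy diagram~\eqref{eq:thom_iso}, which fixes the base locus $X'$ pointwise. Since $\phi$ is well-defined over $U = \mathbb{P}^1 \setminus T$, the transported path $\alpha_k(u) = \phi^{-1}(\{u\} \times \alpha_k)$ has constant boundary $s_k - s_1$ and provides a canonical continuation of $\alpha_k$ along any path entirely contained in $U$. The monodromy along a loop that stays in $U$ is therefore trivial, and all the nontrivial behaviour comes from crossings of the tree $T$.

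First I would homotope the loop $\ell_j$ inside $\mathbb{P}^1 \setminus \Sigma$ to a loop $\tilde \ell_j$ that leaves $b$ on one side of the branch $p_j$, remains inside $U$ throughout, encircles only the critical value $t_j$, and returns to $b$ from the opposite side of $p_j$. Such a perturbation of the original loop $\ell_j = p_j^{-1} \ell'_j p_j$ exists because no other branch of $T$ blocks the deformation, and it is still homotopic to $\ell_j$ in $\mathbb{P}^1 \setminus \Sigma$. Along $\tilde \ell_j$, the trivialization $\phi$ carries the continuation of $\alpha_k$ in the fibres in a canonical way, sending $\alpha_k(b^-)$ to $\alpha_k(b^+)$, where $b^\pm$ denote the two limits of $u \in U$ approaching $b$ along the two sides of $p_j$.

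It follows that in $H_1(X_b, X')$,
\[
  {\ell_j}_* \alpha_k - \alpha_k \;=\; \alpha_k(b^+) - \alpha_k(b^-).
\]
Both sides of this relative class have boundary $s_k - s_1$, so the difference is a closed 1-cycle in $X_b$. By the definition of $v_k(t) = \alpha_k(t^+) - \alpha_k(t^-)$ and the continuity of the limits along the $j$-th branch of $T$, this cycle equals $v_{kj} \in H_1(X_b)$. Applying Lemma~\ref{lem:path-monodromy} then gives ${\ell_j}_* \alpha_k - \alpha_k = v_{kj} = m_{kj}\delta_j$ under the identification $H_1(X_b) \hookrightarrow H_1(X_b, X')$ from the splitting~\eqref{relative_homology_decomp}.

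The main subtlety will be fixing the sign and orientation conventions consistently: the designation of the $+$ and $-$ sides of $p_j$ must match both the anticlockwise orientation of $\ell_j$ fixed in Section~\ref{sec:comp-appr-paths} and the convention used to define $v_k(t)$. With those choices coherently propagated, no vanishing cycle $\delta_i$ with $i \neq j$ can contribute, since $\tilde \ell_j$ lies in a simply connected region of $\mathbb{P}^1 \setminus \Sigma$ that encircles only $t_j$, and the resulting sign is the positive one displayed in the statement.
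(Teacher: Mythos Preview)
Your argument is correct and is essentially the same as the paper's, only more detailed: the paper's proof is the single sentence ``This is simply a reformulation of the definition of~$m_{kj}$,'' and what you have written is precisely the unpacking of that sentence---homotope $\ell_j$ into $U$ so that the trivialisation~$\phi$ transports $\alpha_k(b^-)$ to $\alpha_k(b^+)$, and identify the difference with $v_{kj}=m_{kj}\delta_j$ by the very definition of $v_{kj}$ as the limit of $\alpha_k(t^+)-\alpha_k(t^-)$ along the $j$-th branch.
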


\begin{proof}
  This is simply a reformulation of the definition of~$m_{kj}$.
\end{proof}

The method described in Section~\ref{sec:comp-monodr-matr-1} to compute the matrices of the monodromy action on~$H_1(X_b)$ extends to the relative case to compute the action on~$H_1(X_b, X')$.

We choose a basis~$\omega_1,\dotsc,\omega_r$ of~$\mathcal{H}$
in the form
\begin{equation}
  \omega_i = \res \frac{B_i}{P_t^2} \Omega_2.
\end{equation}
Recall the period matrix~$\Pi(t)$ defined by the coefficients
\begin{equation}
  \Pi_{ij}(t) = \int_{\eta_j(t)} \omega_i(t) = \int_{T(\eta_j(t))} \frac{B_i}{P_t^2} \Omega_2,
\end{equation}
where~$T : H_1(X_t) \to H_2(\mathbb{P}^2\setminus X_t)$ is the Leray tube map.
Recall that~$\Pi(t)$ satisfies a differential equation~$\Pi'(t) = A(t) \Pi(t)$, see~\eqref{eq:1},
and that the monodromy action on the solution space of this differential equation
is dual to the monodromy action on~$H_1(X_b)$.

We can extend the matrix~$\Pi(t)$ with integrals related to the paths~$\alpha_k(t)$.
For each~$k$, we define a Leray tube~$T(\alpha_k(t))$ around~$\alpha_k(t)$ as follows.
For each point~$p$ of~$\alpha_k(t)$, we choose, continuously with respect to~$p$, a line~$L_p$ in~$\mathbb{P}^2$ passing through~$p$
and not tangent to~$X_t$.
Then, for $\varepsilon$ small enough, we define $T(\alpha_k(t))$ as the union over all points~$p\in \alpha_k(t)$ of the $\varepsilon$-circle in~$L_p$ with center~$p$.
Up to homotopy, $T(\alpha_k(t))$ only depends on the choice of~$L_{s_1}$ and~$L_{s_k}$, which determine the border.
Indeed, for each~$p$, the space of possible lines~$L_p$ is contractible, so the choice of~$L_p$ is irrelevant.
We assume that~$L_{s_1}$ and~$L_{s_k}$ are fixed, so that the border of~$T(\alpha_k(t))$ is constant.

Let
\begin{equation}
  \label{eq:23}
  \Theta_{ik}(t) = \int_{T(\alpha_k(t))} \frac{B_i}{P_t^2} \Omega_2.
\end{equation}
It is an analytic function of~$t$, in a neighbourhood of~$b$. Indeed, if~$t$ is close enough to~$b$, then $T(\alpha_k(t))$ deforms into~$T(\alpha_k(b))$ in~$X_t^\complement$, with fixed boundary.
So the integral~\eqref{eq:23} may be taken over the fixed domain~$T(\alpha_k(b))$.
Since the integrand depends analytically on the parameter~$t$, this shows that~$\Theta_{ik}(t)$ depends analytically on~$t$.
By following the deformation of the path~$\alpha_k(t)$, the function~$\Theta_{ik}$ extends meromorphically on any simply connected open subset of the complex plane
avoiding the singular values~$\Sigma$. (There may be poles at points~$t$ where~$L_{s_1}$ or~$L_{s_k}$ are tangent to~$X_t$.)
After extending~$\Theta_{ik}(t)$ over a loop~$\ell_j$, we obtain a new determination~${\ell_j}_* \Theta_{ik}$ which satisfies
\begin{equation}
  \label{eq:24}
  {\ell_j}_* \Theta_{ik}(t) = \int_{T(\alpha_k(t) + v_{kj}(t))} \frac{B_i}{P_t^2} \Omega_2
  = \Theta_{ik}(t) + m_{kj} \int_{\delta_j(t)} \omega_i(t).
\end{equation}
In particular, the monodromy action on the functions~$\Theta_{ik}$ determine the coefficients~$m_{kj}$, and therefore the monodromy action on~$H_1(X_b, X')$.

It remains to see that the~$\Theta_{ik}$ are solutions of a differential system, so that the monodromy action can be recovered by numerical integration.
Indeed,
by definition of~$\mathcal{H}$,
there are some~$\beta_i$ 1-forms on~$X_t^\complement$ such that
\begin{equation}
  \label{eq:25}
  \frac{\partial}{\partial t} \frac{B_i}{P_t^2} \Omega_2 = \sum_{j} a_{ij}(t) \frac{B_j}{P_t^2} \Omega_2 + \ud \beta_i.
\end{equation}
After integrating over~$T(\alpha_k(t))$,
we obtain
\begin{equation}
  \Theta_{ik}'(i) = \sum_j a_{ij} (t) \Theta_{ik}(t) + \int_{T(\alpha_k(t))} \ud \beta_i.
\end{equation}
The boundary of~$T(\alpha_k(t))$ is two circles, one in~$L_{s_k}$ and one in~$L_{s_1}$ (with a minus sign), so by Stokes' and Cauchy's formulae,
\begin{equation}
  \int_{T(\alpha_k(t))} \ud \beta_i = \int_{\partial T(\alpha_k(t))} \beta_i =  \res_{s_k}( \beta_i|_{L_{s_k}} ) - \res_{s_1}( \beta_i|_{L_{s_1}} ).
\end{equation}
Since~$s_1$ and~$s_k$ are fixed, this is simply a rational function in~$t$, which we denote~$R_{ik}(t)$, defining a matrix~$R \in \mathbb{C}(t)^{r \times (d-1)}$.

So we can consider the following differential system, of dimension~$r+d-1$,
\begin{equation}
  Z' = \left(\begin{array}{c|c} A& R \\\hline 0 & 0 \end{array}\right) Z,
\end{equation}
which admits the fundamental solution
\begin{equation}\label{eq:28}
 \tilde \Pi = \left(
   \begin{array}{c|c}
     \Pi & \Theta \\\hline
     0 & \mathbf{1}
   \end{array}
 \right)\,.
\end{equation}
The monodromy of this differential system is conjugate to the monodromy action on~$H_1(X_b, X')$.
Namely, considering the matrix~$\operatorname{Mat}(\ell_{*})$ of the action of a path~$\ell$ in the basis $\eta_1,\dotsc,\eta_r, \alpha_2,\dotsc,\alpha_d$ of~$H_1(X_b, X')$,
we have similarly to~\eqref{eq:27}
\begin{equation}
  \operatorname{Mat}(\ell_{*}) = \tilde \Pi(b)^{-1} \cdot \ell_{*} \tilde \Pi(b) = \tilde \Pi(b)^{-1} \tilde\Lambda_{\ell} \tilde \Pi(b),
\end{equation}
where~$\tilde \Lambda_\ell$ is the transition matrix associated to the system~\eqref{eq:28}.
This gives the desired algorithm for computing the monodromy action on relative homology.

\begin{remark}
  The order of the differential operators that need to be integrated is larger than those of Section~\ref{sec:comp-monodr-matr}, so the computation is expensive in practice, even for quartic surfaces.
  An alternative approach is to track explicitly the deformation of the paths~$\alpha_k(t)$  by following the movement of the critical values of the fibration $X_t\dashrightarrow \Proj^1$ induced by a fixed projection $\Proj^2\dashrightarrow\Proj^1$ when $t$ changes.
  Indeed, we may choose the fibration to be such that $X' = X_b \cap H'_a$ where $\{H'_t\}$ is the hyperplane pencil of the fibration of $X_b$, and thus $H_{1}(X_b, X') = H_{1}(X_b, X_b\cap H'_a)$.
  It then becomes apparent that the $\tilde M_i$ represents the action of monodromy on the thimbles, which can be obtained from the braid induced by the  movement of the aforementioned critical values.
  We will not expand upon this method, which goes beyond the scope of this paper.
\end{remark}

\subsection{Computing the intersection product}
\subsubsection{The intersection product of $H_n(Y)$}
In this section we explain how to recover the intersection matrix of $H_n(Y)$ using methods similar to those of~\textcite[\S2]{shiga79} or~\textcite[\S3]{NorihikoShiga2001}. 
Let $n\le 2$ and $d$ be respectively the dimension and degree of $X$.

We begin with a lemma characterising $H_n(X)$ as a subspace of $H_n(Y)$.
\begin{lemma}
\label{lem:orthogonal_complement}
The Poincaré dual of the pullback $\pi^*$ embeds $H_n(X)$ to the orthogonal complement of $H_{n-2}(X')$ in $H_n(Y)$ isometrically (i.e.\ the intersection product is preserved).
\end{lemma}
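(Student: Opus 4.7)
The plan is to use two standard properties of the blowup $\pi:Y\to X$ along the smooth codimension-two center $X'$: the degree-one identity $\pi_*\pi^*=\operatorname{id}$ in cohomology, and the projection formula $\pi_*(\pi^*a\cup b)=a\cup\pi_*b$. I define $\pi^*:H_n(X)\to H_n(Y)$ as the Poincaré dual of the cohomological pullback $\pi^*:H^n(X)\to H^n(Y)$, which makes sense since $X$ and $Y$ are both smooth compact oriented $2n$-manifolds. Injectivity is immediate from $\pi_*\pi^*=\operatorname{id}$.

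For the isometry, let $\alpha,\beta\in H_n(X)$ with Poincaré duals $\tilde\alpha,\tilde\beta\in H^n(X)$. Then
\[
\pi^*\alpha\cdot\pi^*\beta \;=\; \int_Y \pi^*\tilde\alpha\cup\pi^*\tilde\beta \;=\; \int_Y \pi^*(\tilde\alpha\cup\tilde\beta) \;=\; \int_X \tilde\alpha\cup\tilde\beta \;=\; \alpha\cdot\beta,
\]
the third equality being the degree-one property of $\pi$ applied to the top-degree class $\tilde\alpha\cup\tilde\beta$.

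For the orthogonality, recall from diagram~\eqref{diag:bigchase} that the inclusion $H_{n-2}(X')\hookrightarrow H_n(Y)$ sends a class $[z]$ to its total transform $[\pi^{-1}(z)]$. Since $\pi:\pi^{-1}(X')\to X'$ is a $\mathbb{P}^1$-bundle, the pushforward $\pi_*[\pi^{-1}(z)]$ is represented by a cycle of real dimension $2n$ supported on the subvariety $z\subset X$ of real dimension $2(n-2)<2n$, and therefore vanishes in $H_n(X)$. Dualizing and applying the projection formula yields
\[
\pi^*\alpha \cdot [\pi^{-1}(z)] \;=\; \int_Y \pi^*\tilde\alpha \cup \mathrm{PD}[\pi^{-1}(z)] \;=\; \int_X \tilde\alpha \cup \pi_*\mathrm{PD}[\pi^{-1}(z)] \;=\; 0,
\]
since cohomological $\pi_*$ is Poincaré-dual to homological $\pi_*$.

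The main obstacle is less mathematical than notational: one must fix Poincaré-duality conventions so that pullback, proper pushforward, and cup product satisfy the expected compatibilities, and check that the inclusion $H_{n-2}(X')\hookrightarrow H_n(Y)$ used here is the same as the one of diagram~\eqref{diag:bigchase}, already described there as $[A]\mapsto[A\times\Proj^1]$. Once these identifications are in place, the argument above is formal. If desired, equality of the image with the full orthogonal complement (rather than mere inclusion) follows from a dimension count together with the nondegeneracy of the intersection pairing restricted to the exceptional part $H_{n-2}(X')\subset H_n(Y)$.
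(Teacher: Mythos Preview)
Your proof is correct and follows essentially the same approach as the paper's own proof: both use that $\pi^*$ preserves cup products together with the degree-one property of $\pi$ for the isometry, and the projection formula together with $\pi_*$ vanishing on the exceptional classes for the orthogonality. Your version is slightly more explicit about why $\int_Y\pi^*(\tilde\alpha\cup\tilde\beta)=\int_X\tilde\alpha\cup\tilde\beta$, and you phrase the orthogonality argument for a general $(n-2)$-cycle $z$ in $X'$ rather than for the individual exceptional divisors $E_i$, but the substance is the same.
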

\begin{proof}
Let $\alpha, \beta\in H^n(X)$. As $\pi^*$ preserves cup products, we have that
\begin{equation}
\langle\pi^*\alpha, \pi^*\beta\rangle = \langle\alpha, \beta\rangle\,.
\end{equation}
For $1\le i\le d$, let $\tilde E_i\in H^n(Y)$ be the Poincaré dual of the exceptional divisor $E_i$. The projection formula \parencite[p.~256, A4]{Hartshorne_1977} yields that
\begin{equation}
\pi_*(\tilde E_i\cdot \pi^*\alpha) = \pi_*\tilde E_i\cdot\alpha = 0\,,
\end{equation}
as $\pi_*\tilde E_i =0$.
Poincaré duality yields the desired result.
\end{proof}

Therefore it is relevant to compute the intersection product of $H_n(Y)$. 
When $n=2$, consider the maps
\begin{equation}
  \label{eq:inclusions}
  \begin{tikzcd}
    H_2(X_b) \arrow[r]\arrow[dr]& H_2(Y_+)\arrow[r, "\phi"]\arrow[d, "\iota_*"]& \mathcal{T}(Y) \,. \\
    &H_2(Y)&
  \end{tikzcd}
\end{equation}
Recall from Lemma \ref{torsion_free} that $H_2(X_b)$ is generated by the inclusion of a linear section $h = [X_b\cap L]$ in $X_b\subset Y$.
Furthermore, from the long exact sequence of the pair $(Y_+, X_b)$, $\phi$ is surjective.

\begin{lemma}\label{lem:induced_IP}
The intersection product on $H_2(Y_+)$ induced by $\iota_*$ induces an intersection product on $\mathcal{T}(Y)$ through $\phi$.
\end{lemma}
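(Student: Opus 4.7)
The plan is to show that the bilinear form $(\alpha,\beta)\mapsto \iota_*\alpha \cdot \iota_*\beta$ on $H_2(Y_+)$ vanishes whenever $\alpha \in \ker\phi$; since $\phi$ is surjective, this is exactly what is needed for it to descend to a well-defined form on $\mathcal{T}(Y)$.

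First I would pin down $\ker\phi$ using the long exact sequence of the pair $(Y_+,X_b)$. Let $q:H_2(Y_+)\to H_2(Y_+,X_b)$ be the natural map; by exactness, $\operatorname{im} q = \ker\partial$. Since $\mathcal{T}(Y) = \ker\partial/\operatorname{im}\tau_\infty$, one has $\phi = (\text{quotient})\circ q$, so
\[
  \ker\phi = q^{-1}(\operatorname{im}\tau_\infty),
\]
which, in particular, contains $\ker q = \operatorname{im}\bigl(H_2(X_b)\to H_2(Y_+)\bigr)$.

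Next I would compare with the long exact sequence of $(Y,X_b)$ via the inclusion $\iota:Y_+\hookrightarrow Y$. The key point is that $\operatorname{im}\tau_\infty\subseteq H_2(Y_+,X_b)$ is killed by the map to $H_2(Y,X_b)$. Indeed, for $\eta\in H_1(X_b)$, the relative cycle $\tau_\infty(\eta)$ is the extension of $\eta$ along the equator $S^1 = \partial D_-$; using the trivialisation of $f:Y_-\to D_-$ (which exists because $D_-$ contains no critical values), the chain $\eta\times D_-\subset Y_-$ provides a relative filling of $\tau_\infty(\eta)$ inside $(Y,X_b)$. Consequently, for $\alpha\in\ker\phi$, $\iota_*\alpha$ maps to $0$ in $H_2(Y,X_b)$, and by exactness $\iota_*\alpha$ lies in the image of $H_2(X_b)\to H_2(Y)$, i.e.\ $\iota_*\alpha = k[X_b]$ for some integer~$k$ (recall $H_2(X_b)\simeq\mathbb{Z}$ is generated by the fundamental class).

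Finally I would check that $[X_b]\cdot\iota_*\beta = 0$ in $H_0(Y)$ for every $\beta\in H_2(Y_+)$. The fibre $X_b$ is homologous in $Y$ to any other fibre $X_c$ of~$f$; choosing $c$ in the interior of $D_-$ (for instance, very close to the regular value~$\infty$), the fibre $X_c$ is disjoint from $Y_+=f^{-1}(D_+)$, so it has trivial intersection with any cycle supported in~$Y_+$. Homological invariance of the intersection product then gives $[X_b]\cdot\iota_*\beta = 0$, whence $\iota_*\alpha\cdot\iota_*\beta = 0$, as required.

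The main obstacle is the middle step: identifying geometrically why the equator extensions $\tau_\infty(\eta)$ become null-homologous once one enlarges from $(Y_+,X_b)$ to $(Y,X_b)$. Everything else is either an exactness diagram chase or the standard fact that distinct fibres of a Lefschetz fibration can be made disjoint.
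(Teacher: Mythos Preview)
Your proof is correct and follows essentially the same route as the paper's: identify $\ker\phi$ via the exact sequence of $(Y_+,X_b)$, show that the image under $\iota_*$ lands in the span of the fibre class (the paper cites exactness of the second row of its diagram~\eqref{diag:bigchase} where you give the explicit filling $\eta\times D_-$, but these are the same fact), and then kill the intersection by moving the fibre into the interior of $D_-$.
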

\begin{proof}
The inclusion $\iota:Y_+\to Y$ induces an intersection product on $H_2(Y_+)$ from that on $H_2(Y)$.
We aim to prove that this product induces a uniquely defined product on $\mathcal{T}(Y)$.
From the long exact sequence of the pair $(Y_+, X_b)$, $H_2(Y_+)$ can be identified with $\ker\delta\oplus H_2(X_b)$.

Let $\Gamma_1, \Gamma_2 \in H_2(Y_+)$ and assume that the class of $\phi(\Gamma_2)=0$.
Then $\Gamma_2 \in \im\tau_\infty\oplus h\Z$.
In particular, as $\im\tau_\infty\subset \ker\iota_*$ from the second line of \eqref{diag:bigchase}, $\iota_*\Gamma_2 = k h$ for some integer $k\in\Z$.
We thus have
\begin{equation}
	\langle\Gamma_1, \Gamma_2\rangle = k\langle\iota_*\Gamma_1, h\rangle = 0\,,
\end{equation}
as $h$ can be deformed to a fibre in the interior of the lower hemisphere $Y_-$, and thus not intersect $\iota_*\Gamma_1$.
\end{proof}

Let $\Gamma_i = \sum_j a_{ij}\Delta_{j}\in \mathcal{T}(Y)$ be two extensions, described as a linear combination of thimbles for $i=1,2$.
The previous lemma implies that the intersection product $\langle \Gamma_1, \Gamma_2\rangle$ is well defined.

In order to compute this intersection product, we may deform the geometric representatives of the thimbles for $\Gamma_2$ slightly, 
in a way that the basepoint is no longer the same for $\Gamma_1$ and $\Gamma_2$, 
as is represented in \figref{fig:intersection_product}. 
We then notice that the intersection between $\Delta_i$ and $\Delta_j'$ 
(where the latter is the aforementioned deformation of $\Delta_j$) 
is contained in at most $4$ fibres.
This means that in order to compute the intersection product $\langle \Gamma_1, \Gamma_2\rangle$, 
we may simply consider the intersection of pairs of thimbles 
(which we will also denote $\langle \Delta_i, \Delta_j\rangle$ by abuse of notation) 
and use bilinearity.  
More precisely, we see that
\begin{itemize}
\item if $i>j$, then $\langle\Delta_i, \Delta_j\rangle = 0$,
\item if $i<j$, then $\langle\Delta_i, \Delta_j\rangle = \langle \delta_i, \delta_j\rangle$,
\item if $i=j$, then $\langle\Delta_i, \Delta_j\rangle = -\langle p_i, \delta_i\rangle$,
\end{itemize}
where $\delta_i$ and $p_i$ are respectively the vanishing cycle and a permuting cycle of $\Delta_i$, i.e.\ $\delta_i = {\ell_i}_*p_i-p_i$.
We can then recover the intersection product with the formula $\langle \Gamma_1, \Gamma_2\rangle = \sum_{i,j} a_{1i} a_{2j} \langle\Delta_i, \Delta_j\rangle$.

\begin{figure}[ht]
    \centering
    \resizebox{0.5\linewidth}{!}{
    \scalefont{2}
        \tikzfig{intersection_product}
        }
        \caption{The intersection between two thimbles can be reduced to intersection products of cycles of the fibre at the four fibres above the intersections of $\ell_i$ and $\ell_j$.}
        	\label{fig:intersection_product}
\end{figure}

For $n=1$, this is sufficient to recover the intersection product on $H_1(Y)$.

Assume $n=2$. 
By the same argument as that of Lemma~\ref{lem:induced_IP}, $h$ is orthogonal to the image of $H_2(Y_+)$ in $H_2(Y)$.
All that remains it to compute the intersection products of an exceptional divisor, say $E_1$.
The intersection of $E_1$ with $h$ is precisely one point,  $\{s_1\}$, and thus
\begin{equation}
\langle h, E_1\rangle  = 1\,.
\end{equation}

Let $\Gamma_1, \dots, \Gamma_{s}\in H_2(Y_+)$ induce a basis of $\mathcal{T}(Y)$ through $\phi$.
Up to adding multiples of $h$, we may assume the $\Gamma_i$'s to be orthogonal to $E_1$. 
These observations allow us to recover the full intersection matrix of $H_2(Y)$. 
We synthesise these results with the following lemma.

\begin{lemma}\label{lemma:intersection_product}
There exist $\Gamma_1, \dots, \Gamma_{s}\in H_2(Y_+)$, $h\in H_2(X_b)$ and $E_1\in H_2(Y)$ such that
\begin{itemize}
\item $\phi(\Gamma_1), \dots, \phi(\Gamma_s)$ is a basis of $\mathcal{T}(Y)$,
\item $E_1$ is the exceptional divisor of the blow-up at $s_1$,
\item $h, \iota_*\Gamma_1, \dots, \iota_*\Gamma_s, E_1$ is a basis of $H_2(Y)$.
\end{itemize}
Define the coefficients $a_{ij}$ to be so that $\phi(\Gamma_i) = \sum_{j}a_{ij}\Delta_j$.
The intersection products are given by
\begin{itemize}
\item $\langle\iota_*\Gamma_i, \iota_*\Gamma_j\rangle = \sum_{k,l} a_{ik} a_{jl} \langle\Delta_k, \Delta_l\rangle$ for all $i,j$,
\item $\langle\iota_*\Gamma_i, h\rangle = 0$ for all $i$,
\item $\langle\iota_*\Gamma_i, E_1\rangle = 0$ for all $i$,
\item $\langle h, h\rangle = 0$,
\item $\langle h, E_1\rangle = 1$,
\item $\langle E_1, E_1\rangle = -1$.
\end{itemize}
The intersection matrix in the aforementioned basis of $H_2(Y)$ is therefore given by
\begin{equation}
\begin{bmatrix}
0& \textbf{0} & 1\\
\textbf{0} & M & \textbf{0} \\
1&\textbf{0} &-1
\end{bmatrix}\,,
\end{equation}
where $M$ is the matrix given coefficient-wise by $M_{ij} = \sum_{k,l} a_{ik} a_{jl} \langle\Delta_k, \Delta_l\rangle$.

\end{lemma}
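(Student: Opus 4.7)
The plan is to construct the stated basis, verify the six intersection formulas in turn, and lean on Theorem~\ref{thm:middle_hom_Y} for the structure of $H_2(Y)$, Lemma~\ref{lem:induced_IP} to legitimise the use of the lifts $\Gamma_i$, and the thimble-deformation argument sketched just before the lemma statement for the central formula.

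For the basis, I would first invoke Theorem~\ref{thm:middle_hom_Y} to write $H_2(Y) \simeq \mathcal{T}(Y) \oplus H_2(X_b) \oplus H_0(X_b)$. The intersection-with-$X_b$ map identifies the $H_0(X_b) \simeq \mathbb{Z}$ summand inside $H_2(Y)$, and since $E_1 \cap X_b = \{s_1\}$ transversely, $[E_1]$ maps to a generator of this summand. With $h$ generating $H_2(X_b)$ and any lifts $\Gamma_1, \dots, \Gamma_s \in H_2(Y_+)$ of a chosen basis of $\mathcal{T}(Y)$ (such lifts exist because $\phi: H_2(Y_+) \to \mathcal{T}(Y)$ is surjective, as noted around~\eqref{eq:inclusions}), the classes $h, \iota_*\Gamma_1, \dots, \iota_*\Gamma_s, E_1$ form a $\mathbb{Z}$-basis of $H_2(Y)$.

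Next I would compute the easier intersection numbers. The identity $\langle h, h\rangle = 0$ holds because two generic linear sections of $X_b$ are disjoint inside $X_b$ and hence inside $Y$. The identity $\langle h, E_1\rangle = 1$ comes from the transverse intersection of the proper transform of a hyperplane section of $X$ passing through $s_1$ with the exceptional $\mathbb{P}^1$. The identity $\langle E_1, E_1\rangle = -1$ is the standard self-intersection of the exceptional divisor of the blow-up of a smooth surface at a point. For $\langle \iota_*\Gamma_i, h\rangle = 0$, the argument from the proof of Lemma~\ref{lem:induced_IP} applies unchanged: $h$ can be deformed into the interior of $Y_-$, away from any cycle supported in $Y_+$. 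I would then normalise each $\Gamma_i$ by replacing it by $\Gamma_i - \langle \iota_*\Gamma_i, E_1\rangle \cdot h$ to achieve $\langle \iota_*\Gamma_i, E_1\rangle = 0$; since $h \in \ker \phi$ and is orthogonal to itself and to the other $\iota_*\Gamma_j$, this modification preserves both the basis property and the image of the $\Gamma_i$ in $\mathcal{T}(Y)$.

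Finally, the formula $\langle \iota_*\Gamma_i, \iota_*\Gamma_j\rangle = \sum_{k,l} a_{ik} a_{jl} \langle \Delta_k, \Delta_l\rangle$ follows by bilinearity from the case analysis for pairs of thimbles given just before the lemma, with Lemma~\ref{lem:induced_IP} ensuring that the answer depends only on the images in $\mathcal{T}(Y)$. The hard part will be the thimble-by-thimble computation itself: after perturbing $\Delta_l$ to $\Delta_l'$ so that it no longer shares a basepoint with $\Delta_k$, one must verify that all intersections occur in the four fibres above the crossings of the perturbed base-paths, and correctly assign the signs ($0$ for $k>l$, $\langle \delta_k, \delta_l\rangle$ for $k<l$, and $-\langle p_k, \delta_k\rangle$ for $k=l$) using the anticlockwise indexing convention from Section~\ref{sec:comp-appr-paths}. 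This is a local orientation analysis near each crossing in $\mathbb{P}^1$ and is the only step that requires genuine bookkeeping; the remaining identities reduce to standard blow-up theory and to the decomposition provided by Theorem~\ref{thm:middle_hom_Y}.
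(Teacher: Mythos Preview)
Your proposal is correct and follows essentially the same route as the paper: the lemma is stated there as a synthesis of the preceding discussion, and you have correctly identified all the ingredients (Theorem~\ref{thm:middle_hom_Y} for the decomposition, Lemma~\ref{lem:induced_IP} for well-definedness on~$\mathcal{T}(Y)$, the thimble-deformation computation for the central block, the normalisation by multiples of~$h$ to kill~$\langle\iota_*\Gamma_i,E_1\rangle$, and the standard blow-up facts for~$E_1$). One small wording fix: for~$\langle h,h\rangle=0$, recall that in the surface case~$h$ is the class of the whole fibre~$X_b$ (not a section of it), so the argument is that two distinct fibres~$X_b$ and~$X_{b'}$ are disjoint in~$Y$---which is exactly the ``deform into~$Y_-$'' argument you already invoke for~$\langle\iota_*\Gamma_i,h\rangle$.
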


\subsubsection{The intersection product of $H_2(X)$}

For surfaces, in order to recover the intersection product on $H_2(X)$, it only remains to remove the exceptional divisors $E_1, \dots, E_{d-1}$.
We begin with the following lemma which allows us to compute the intersection product of the $E_i$.
\begin{lemma}
$\langle E_i, E_j\rangle = -\delta_{ij}$ \text{(Kronecker delta)}
\end{lemma}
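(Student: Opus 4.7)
The plan is to treat the two cases separately. For $i \neq j$, the result is geometric: since $\pi : Y \to X$ is an isomorphism outside the finite set $X' = \{s_1,\dotsc,s_d\}$, the exceptional divisors $E_i = \pi^{-1}(s_i)$ and $E_j = \pi^{-1}(s_j)$ lie above distinct points and are therefore disjoint submanifolds of $Y$. Hence $\langle E_i, E_j \rangle = 0$.

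For the diagonal entry $i = j$, the strategy is to compute the self-intersection via the projection formula, using a smooth fibre of~$f$ as a test class. Let $F = f^{-1}(b)$ be a generic (smooth) fibre. Because~$X_b = X \cap H_b$ is a hyperplane section passing once through each base point~$s_k \in X'$, the total transform in $Y$ decomposes as
\[
\pi^*[X_b] \;=\; [F] + \sum_{k=1}^d [E_k],
\]
where $F$ is the strict transform.

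Since $\pi$ contracts each $E_k$ to a point, $\pi_* E_i = 0$. The projection formula then gives $\langle \pi^*[X_b], E_i\rangle = \langle [X_b], \pi_* E_i\rangle = 0$. Expanding and using the off-diagonal vanishing already established,
\[
0 \;=\; \Bigl\langle F + \sum_k E_k,\; E_i \Bigr\rangle \;=\; \langle F, E_i\rangle + \langle E_i, E_i \rangle.
\]
It remains to observe that $\langle F, E_i\rangle = 1$: the smooth fibre~$F$ meets the exceptional line~$E_i \cong \mathbb{P}^1$ transversally in a single point, namely the point of~$F$ lying above~$s_i \in X_b$. Therefore $\langle E_i, E_i\rangle = -1$, completing both cases.

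The only subtle point is verifying that $F \cdot E_i = 1$ transversally; this follows from smoothness of~$X$ at~$s_i$ and from~$X_b$ being a hyperplane section meeting~$A$ transversally at~$s_i$ (guaranteed by the genericity of the pencil, recalled in Section~\ref{sec:lefschetz-fibrations}). No further input is required; the argument is the standard blowup-of-a-smooth-point computation, repackaged through the Lefschetz fibration $f : Y \to \mathbb{P}^1$ that is already in use.
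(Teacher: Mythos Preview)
Your proof is correct. The paper's own proof is a two-liner: disjointness gives the off-diagonal zeros, and for the diagonal it simply invokes the standard fact that the exceptional divisor of the blowup of a smooth surface at a point has self-intersection~$-1$. You instead prove that fact in context via the projection formula, using the decomposition $\pi^*[X_b] = [F] + \sum_k [E_k]$ and the transversal intersection $\langle F, E_i\rangle = 1$; this is exactly the classical argument behind the result the paper cites, specialised to the explicit model $Y \subset X \times \mathbb{P}^1$ where the transversality of $F = X_b \times \{b\}$ and $E_i = \{s_i\} \times \mathbb{P}^1$ is immediate. So your route is more self-contained but not genuinely different.
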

\begin{proof}
If $i\ne j$, $E_i$ and $E_j$ are disjoint and thus their intersection product is empty.
As $E_i$ is the exceptional divisor of a blow up at a point, $E_i^2=-1$. 
\end{proof}

\begin{lemma}
For $1\le i \le d-1$, $E_i-E_1 = \sum_{j=1}^{s} m_{ij}\Gamma_j+h$, where the $m_{ij}$ are the integers computed in section \ref{sec:removing_exc_divs}.
\end{lemma}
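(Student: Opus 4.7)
The plan is to decompose $E_i - E_1$ with respect to the basis of $H_2(Y)$ described in Lemma~\ref{lemma:intersection_product}, and then pin down each unknown coefficient through intersection pairings. The starting point is the identification of the class in $\mathcal{T}(Y)$ supplied by Lemma~\ref{lem:diff-exc-div}; from there, only the coefficient of~$h$ is undetermined, and a single intersection computation with~$E_1$ will fix it.

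First I would verify that $E_i - E_1$ sits in the subspace of $H_2(Y)$ consisting of $\iota_*H_2(X_b) \oplus$ (the lift of~$\mathcal{T}(Y)$). Since $E_k \cap X_b = \{s_k\}$ for every~$k$, we have $(E_i - E_1)\cap X_b = \{s_i\} - \{s_1\} = 0$ in $H_0(X_b)$. By the first exact sequence of Theorem~\ref{lem:TY}, this means the class of $E_i - E_1$ in $H_2(Y)/\iota_*H_2(X_b)$ lies in $\mathcal{T}(Y)$; in the decomposition of Theorem~\ref{thm:middle_hom_Y}, the $H_0(X_b)$-component vanishes.

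Next, by Lemma~\ref{lem:diff-exc-div} that class is $\sum_j m_{ij}\,[\Delta_j]$, with the integers $m_{ij}$ from Section~\ref{sec:removing_exc_divs} (re-expressed, if needed, in the basis $\phi(\Gamma_1),\dots,\phi(\Gamma_s)$ of $\mathcal{T}(Y)$). Lifting back to $H_2(Y)$ with the specific representatives $\iota_*\Gamma_j$ chosen in Lemma~\ref{lemma:intersection_product} then gives
\begin{equation*}
E_i - E_1 \;=\; \sum_{j=1}^s m_{ij}\,\iota_*\Gamma_j \,+\, c_i\, h
\end{equation*}
for some integer $c_i$, since the lift is defined only up to $\iota_*H_2(X_b) = \mathbb{Z} h$.

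The last step is to evaluate $c_i$ by pairing both sides with~$E_1$. Using the previous lemma, $\langle E_i, E_1\rangle = -\delta_{i1}$, so the left-hand side gives $\langle E_i - E_1, E_1\rangle = 0 - (-1) = 1$. On the right, Lemma~\ref{lemma:intersection_product} tells us $\langle \iota_*\Gamma_j, E_1\rangle = 0$ for every~$j$ and $\langle h, E_1\rangle = 1$, so the right-hand side reduces to $c_i$. Hence $c_i = 1$ uniformly in~$i$, which is exactly the claim. The only delicate point is bookkeeping through the non-canonical splitting $H_2(Y) \simeq \mathcal{T}(Y)\oplus H_2(X_b)\oplus H_0(X_b)$; all the other ingredients are direct applications of results already established.
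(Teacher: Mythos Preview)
Your proof is correct and follows essentially the same approach as the paper: reduce to determining the coefficient of~$h$ using Lemma~\ref{lem:diff-exc-div}, then pin it down by intersecting with~$E_1$ using the values from Lemma~\ref{lemma:intersection_product} and the previous lemma. You are simply more explicit about the bookkeeping (checking the $H_0(X_b)$-component vanishes and spelling out both sides of the pairing), whereas the paper compresses this into a single sentence.
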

\begin{proof}
Per Section~\ref{sec:removing_exc_divs}, we have for every $i$ the equality
\begin{equation}
	E_i-E_1 = \sum_{j=1}^{s} m_{ij}\Gamma_j+ k_i h\,,
\end{equation}
where the $m_{ij}$'s are known integers, and $k_i\in\Z$.
Taking the intersection product with $E_1$ makes it clear that $k_i=1$.
\end{proof}
We thus have the image of $H_{n-2}(X')$ in $H_{n}(Y)$, and Lemma~\ref{lem:orthogonal_complement} allows us to conclude.

\section{Experiments}
\label{sec:experiments}

\subsection{Benchmarking}

\subsubsection{Expected timings}\label{sec:expected_timings}
Table~\ref{tab:benchmark} shows the time taken on a laptop to compute the periods of $X$ for different dimensions and degrees, as well as the obtained precision.
Each line of Table~\ref{tab:benchmark} corresponds to the computation of a single hypersurface $X = V(P)$, with $P = \sum_{m\in \mathcal M_{n,d}} a_m m$ where $\mathcal M_{n,d}$ is the set of monomial of degree $d$ of $\Q(X_0,\dots, X_n)$, and $a_m$ are random integer coefficients chosen uniformly between $-20$ and $20$.
These timings were obtained on an Apple Macbook Pro M1, using all 10 cores.\\

The column $|\Sigma|$ corresponds to the number of critical values of the fibration.
This has an impact on the number of edges on which the Picard-Fuchs operators need to be integrated, and thus on the computational cost.
More precisely, as the integration edges follow the Voronoi graph of the critical points, the number of edges is at most $3|\Sigma|-6$ \parencite[Cor. 5.2]{PreparataShamos1985}.

The column $\operatorname{rk}PH^{n}(X)$ corresponds to the number of Picard-Fuchs operators that need to be integrated to recover the full period matrix of $X$, see Section~\ref{sec:gauss-manin-conn}.

The columns $\deg \mathcal L$ and $\operatorname{ord} \mathcal L$ correspond to the degree and order of these operators.
More precisely let $\mathcal L$ be one of these Picard-Fuchs operators, corresponding to a form $\omega = \res\frac{A}{P^k}$, as per Section~\ref{sec:de-rham-middle}.
The order of $\mathcal L$ is $\operatorname{rk} {PH}^{n-1}(X_b) +1$, and is the same for all forms.
In contrast, the degree $\operatorname{deg} \mathcal L$ changes depending on the form -- therefore a range is given instead of a single value\footnote{It seems from experiments that the degree increases with the pole order $k$ of $\omega$.
As the filtration with respect to this pole order coincides with the Hodge filtration \parencite{Griffiths_1969a}, this implies that the degrees of the Picard-Fuchs operators of the holomorphic forms are the lowest. Thus the holomorphic periods are conveniently the less computationally expensive periods to compute.
}.
The numbers in the column $\deg \mathcal L$ are lower and higher bounds for this degree for a specific $P$.\\

In the case of dimension greater than 2, the exceptional divisors of the modification were not identified.
For quartic surfaces and cubic threefold, only the periods necessary to describe the Hodge structure were computed (this corresponds to respectively $1$ and $5$ forms).
This is typically what we are interested in when computing periods.
Note that, the recovered data is not sufficient to continue the induction and access higher dimensional varieties -- for this, the full period matrix is required.

\begin{table}[tp]
\begin{center}
\begin{tabular}{ cccccccc } 
 \toprule
$n$ & $d$ & $|\Sigma|$ & $\operatorname{rk} PH^{n}(X)$ & {$\deg \mathcal{L}$} & {$\operatorname{ord} \mathcal{L}$}& {Time}& Precision (dec. digits) \\
\midrule
1 &$3$& 6 &2& 10-30 & 3 &10 sec.&300 \\
2 &$3$& 12 &6& 60 & 3&3 min.&300 \\
3 &$3$& 24 &10& 110-320 & 7&8 hours*& 260 \\ 
1 &$4$& 12 &6& 30-80 & 4&8 min.&350\\
2 &$4$& 36 &21& 170-800 & 7&1 hour*&300\\
1 &$5$& 20 &12& 70-170 & 5&7 hours&270\\ 
\bottomrule
\end{tabular}\\
\vspace{0.2em}
\small{*only the periods necessary for describing the Hodge structure \\were computed (see Section~\ref{sec:expected_timings})}
\caption{Data on the computation of hypersurfaces of degree~$d$ in~$\mathbb{P}^{n+1}(\mathbb{C})$.
$|\Sigma|$ is to the number of critical values of the fibration.
$\operatorname{rk} H^{n}(X)$ is the size of the period matrix.
$\operatorname{ord} \mathcal{L}$ and~$\deg \mathcal{L}$ are the order and degree of the differential operators arising in the computation.
Time is the \emph{real} time, running on~10 cores.
Precision is the number of correct decimal digits obtained.
}
\label{tab:benchmark}
\end{center}
\end{table}

\subsubsection{Comparison with \emph{numperiods} on quartic surfaces}\label{sec:comparison}
The algorithm consists of 4 main steps:
\begin{itemize}
\item Computing the fundamental group of $\Proj^1\setminus\Sigma$;
\item Computing the Picard-Fuchs operators;
\item Computing the monodormy matrices of the Picard-Fuchs operator;
\item Recovering the action of monodormy and reconstructing the homology.
\end{itemize}
In practice, most of the time is spent on the third step. 
In the case of the computation of the periods of a quartic surface, the algorithm spends less than 1 second on step 4, around 20 seconds on step 1 and 2, and around 10 hours on step 3.

The algorithm of \textcite{Sertoz_2019} is implemented in the package \emph{numperiods}\footnote{\url{https://gitlab.inria.fr/lairez/numperiods}} \parencite{LairezSertoz_2019}.
In order to compare the efficiency of both methods, we show the time taken by \emph{numperiods} to compute the periods of K3 surfaces defined by quartic polynomials of the form $F + P$, 
where $F = x^4+y^4+z^4+w^4$ defines the Fermat quartic surface, 
and $P$ is a polynomial with $n$ monomials, with $n$ ranging from $0$ to $5$.
These examples were run on a single core, on a cluster with 500 Gb of memory, and for at most 48 hours each.
Timings for specific cases are given in Table~\ref{tab:comparison_numperiods}.
In all but five of the $100$ examples for $n\in \{4, 5\}$, the computation with \emph{numperiods} could not be carried out, either because the memory usage exceeded the allocated 500 Gb, or because the computation lasted longer than the allocated 48 hours. 

In total the CPU time for the computation of the periods of 204 quartic surfaces was measured using \emph{lefschetz-family} with an input precision of $1500$ bits.
The average time taken was 9 hours, 56 minutes and 46 seconds.
In practice, the package \emph{lefschetz-family} makes use of several cores for the computation of the monodromy matrices of the Picard-Fuchs operators, which can greatly speed up the computation.

\begin{table}[tp]
\begin{center}
\begin{tabular}{ lllll }
 \toprule
$P$& \parbox[t]{7em}{\emph{numperiods}} & \parbox[t]{7em}{\emph{lefschetz-family}} &$\operatorname{ord} \mathcal L$&$\operatorname{deg} \mathcal L$ \\\midrule
$ 0$ & < 1 s& 384 min. &--& -- \\
$ 2x^2zw$ & 4 s & 574 min. &3&4 \\
$ - 2y^3z - 4z^2w^2$ & 2 min. & 510 min. &5&38 \\
$-xyzw + 4xzw^2-2y^4$ & 25 min. & 607 min. &7&110 \\
$y^3z + z^4 + y^3w + x^2zw$ &346 min. &635 min.&14&591\\
$4xyz^2 - 5x^2zw - 4xw^3 - 4zw^3$ & > 2880 min. & 494 min. &21&?\\ 
$ - 2x^2w^2 - 4y^2w^2 - 2yzw^2 + 2yw^3 $ & > 500 Gb & 543 min. &21&?\\
$ x^4 - 4y^2z^2 - 5xz^2w + 2yz^2w + xyw^2$ & > 500 Gb & 538 min. &14&?\\
\bottomrule
\end{tabular}\\
\vspace{0.2em}
\caption{Comparison of CPU time necessary for the computation of the periods of the quartic surface defined by $x^4+y^4+z^4+w^4+P$, using \emph{numperiods} and \emph{lefschetz-family}.
  The columns~$\operatorname{ord}\mathcal{L}$ and~$\deg \mathcal{L}$ record the order of and the degree of the coefficients of the Picard-Fuchs differential equation that \emph{numperiods} integrates.
  The periods of the Fermat hypersurface are hard-coded in \emph{numperiods}, which explains the instantaneous computation for~$P = 0$.}
\label{tab:comparison_numperiods}
\end{center}
\end{table}

\subsection{An application: Picard rank of families of quartic surfaces}
In this section we explain how to use our algorithm to obtain certain algebraic invariants of quartic surfaces. 
Notably we compute the generic Picard rank of families of quartic surfaces (\S\S\ref{Bouyer} and \ref{symmetric}), we check that two quartic surfaces are isomorphic to each other (\S\ref{sec:hodge_isometry}), and give equations for quartic surfaces for each possible Picard rank (\S\ref{sec:picard_rank_examples}).
These examples allow us to test our algorithm against known results.

Given a smooth quartic surface of $\Proj^3$, we may recover its Picard rank thanks to the numerical evaluation of some of its periods.
Such a variety is a $K3$ surface. 
Its middle cohomology group $H^2(X)$ has rank $22$, and its canonical bundle is trivial: there is a unique holomorphic form~$\omega$, up to scaling.
The kernel of the map $\gamma\mapsto \int_\gamma \omega$ is a sublattice of $H_2(X)$ called the Néron-Severi group of~$X$.
The rank of this lattice is called the {\em Picard rank} (or {\em Picard number}, or {\em Néron-Severi rank}) of~$X$.

The LLL algorithm can be used to heuristically recover this kernel from high-precision numerical approximations of the periods.
This computation is not certified and can fail in two ways, in principle.
First, the algorithm may miss integer relations if the number of digits in the coefficients is not smaller than the number of significant digits computed in the periods.
Second, it may recover fake integer relations reflecting a numerical coincidence that a higher precision computation would detect.
In practice, we never observed these phenomena.
See \parencite{LairezSertoz_2019} for a discussion of these issues.

Throughout the following examples, we used the following method. 
The holomorphic $2$-form of a given quartic $X=V(P)$ can be identified as the residue of the only irreducible rational form of $H^3(\Proj^3\setminus X)$ with pole order $1$ \parencite[Eq.~8.6]{Griffiths_1969a}.
Explicitly the periods are given by $\int_\gamma \frac{\Omega_3}{P}$, with $\Omega_3$ the volume form of $\Proj^3$ and $\gamma\in H_3(\Proj^3\setminus X)$.
We may then use the steps up to~\ref{it:tubes} of Section~\ref{sec:wrapup} to compute a basis of $\mathcal{T}(Y)$.
We then compute the periods of the holomorphic form on this basis with the methods of Section~\ref{sec:thimble_integration}.
This yields 24~numerical approximations of complex numbers~$\alpha_i \in \mathbb{C}$.
Cycles on which the periods vanish induce integer linear relations between these numbers.
Of these relations, three come from the exceptional divisors of the blowup~$\pi:Y\to X$ (Section~\ref{sec:removing_exc_divs}).
As the Néron-Severi lattice is characterised by the vanishing of the periods, we may use these relations to compute it.
All in all, the Picard rank~$\rho(X)$ of~$X$ is given by
\begin{equation}
  \label{eq:7}
  \rho(X) = 22 - \dim_{\mathbb{Q}} \left\langle \alpha_1,\dotsc,\alpha_{24} \right\rangle\,.
\end{equation}
We can apply the LLL algorithm to compute $\dim_{\mathbb{Q}} \left\langle \alpha_1,\dotsc,\alpha_{24} \right\rangle$ and heuristically recover the Picard rank.

Altogether the holomorphic periods of 530 smooth quartic surfaces were computed for the results presented in this section.
The computations were run on a cluster, using 32 cores. 
The average time needed to compute the holomorphic periods of one quartic surface was around 40 minutes, ranging from 16 minutes up to 13 hours.
The median time was 28 minutes.
The computation took longer than 90 minutes for only 24 surfaces.
It should be noted that the cause of lengthy computations seems to stem from the choice of the fibration $X\dashrightarrow \Proj^1$ rather than be intrinsic to the surface itself.
Indeed, the limiting factor is the integration step, and the cases where the computation takes a lot of time seem to always be due to the integration on one single pathological edge.
In all the cases we looked at closely, picking another generic fibration reduced the computation time, to around the expected 40 minutes.
However, we have for now no way to choose a fibration that is well adapted to the computation of a given surface {\em a priori}.

\subsubsection{Families studied by Bouyer}\label{Bouyer}
In this section we numerically verify the Picard ranks of the generic elements of the families of quartic surfaces of $\mathbb P^3$ given by \textcite[Theorem~4.9]{BouyerFamilieRank}. These families are generated by polynomials of the form

\begin{equation}
\begin{split}
[A,B,C,D,E] \hspace{.5em}\eqdef \hspace{.5em}&A(x^4 + y^4 + z^4 + w^4) + Bxyzw + C(x^2y^2 + z^2w^2)\\
&\hspace{1em} + D(x^2z^2 + y^2w^2) + E(x^2w^2 + y^2z^2),
\end{split}
\end{equation}
with 5 parameters $A$, $B$, $C$, $D$ and $E$.
More precisely, the families are given respectively by polynomials $[A,B,C,D,E]$, $[A,(DE-2AC)/A,C,D,E]$, $[A,0,C,D,2AC/D]$, $[A,B(2A-B)/A,B,B,B]$ and $[A,0,C,0,0]$.
The theorem of Bouyer states that the generic Picard rank of these families are respectively $16$, $17$, $18$, $19$ and $19$.

To generate elements of these 5 families, we simply pick integers $A,B,C,D,E$ randomly in the interval $[-100,100]$
and consider the quartics defined by the above polynomials.
If these quartics are smooth we may compute the Picard rank as above. 
Otherwise we pick other values for $A,B,C,D,E$.

We checked that the values our method yields for the Picard rank coincide with the values given by the theorem for the varieties corresponding to 56 sets of values for $A,B,C,D,E$.
This gives numerical evidence of the results of~\textcite{BouyerFamilieRank}. 

We found singular examples were the Picard rank was not generic, for $[A,B,C,D,E]$ and $[A,(DE-2AC)/A,C,D,E]$ with $(A,B,C,D,E) = (49, 92, -51, 19, -51)$.
The corresponding polynomials are 
\begin{equation}
\begin{split}
[A,B,C,D,E] &= 49 x^{4} - 51 x^{2} y^{2} + 49 y^{4} + 19 x^{2} z^{2} - 51 y^{2} z^{2} + 49 z^{4} + 92 x y z w\\
&\hspace{2em} - 51 x^{2} w^{2} + 19 y^{2} w^{2} - 51 z^{2} w^{2} + 49 w^{4}\,,
\end{split}
\end{equation}
and
\begin{equation}
\begin{split}
[A,(DE-2AC)/A,C,D,E] &= 49 x^{4} - 51 x^{2} y^{2} + 49 y^{4} + 19 x^{2} z^{2} - 51 y^{2} z^{2}+ 49 z^{4} \\
&\hspace{2em} + \frac{4029}{49} x y z w - 51 x^{2} w^{2} + 19 y^{2} w^{2} - 51 z^{2} w^{2} + 49 w^{4}\,.
\end{split}
\end{equation}
The Picard ranks were 1 higher than the generic value for their respective families, i.e.\ $17$ and $18$.

\subsubsection{Picard rank of symmetric polynomials}\label{symmetric}
In this section we compute the Picard rank of families of quartic surfaces defined by a symmetric polynomial. 
The defining equation of a quartic in $\Proj^3$ is a homogeneous polynomial in $4$ variables, say $x$, $y$, $z$ and $w$. 
We consider the families of polynomials that are symmetric in some of these variables. 
Up to a permutation of the variables, there are 4 such families:
\begin{enumerate}
  \item polynomials symmetric in all the variables,
        \begin{equation*}
          \forall \sigma\in \mathfrak S_{\{x,y,z,w\}} \hspace{1em} P(x,y,z,w) = P(\sigma(x),\sigma(y),\sigma(z),\sigma(w))\,,
        \end{equation*}
  \item polynomials symmetric in three variables, say $x$, $y$ and $z$,
        \begin{equation*}
          P(x,y,z,w) = P(x,z,y,w) =P(y,x,z,w) = P(y,z,x,w) = P(z,x,y,w) = P(z,y,x,w)
        \end{equation*}
  \item polynomials where $x$ and $y$ are symmetric, as well as $z$ and $w$
        \begin{equation*}
          P(x,y,z,w) = P(y,x,z,w) = P(x,y,w,z) = P(y,x,w,z)\,,
        \end{equation*}
  \item and polynomials where $x$ and $y$ are symmetric
        \begin{equation*}
          P(x,y,z,w) = P(y,x,z,w)\,.
        \end{equation*}
\end{enumerate}
A basis of the vector space of such polynomials is given by products of elementary symmetric polynomials. 
We may thus generate {\em a priori} generic (i.e.\ with minimal Picard rank) elements of the family by picking random coefficients as in the previous section.
In practice we pick random integer coefficients in the interval $[-5, 5]$.

Doing so, we observe respectively
\begin{enumerate}
\item a Picard rank of 17 for 113 elements, 18 for one element, and 19 for one element,
\item a Picard rank of 14 for 100 elements, and 15 for one element,
\item a Picard rank of 12 for 107 elements,
\item a Picard rank of 8 for 114 elements.
\end{enumerate}
This leads us to conjecture that the generic Picard ranks of these families are respectively $17$, $14$, $12$ and $8$.

\begin{remark}
Alice Garbagnati pointed out that lower bounds on the Picard rank follow from properties of K3 surfaces with automorphisms. 
These bounds match the heuristic computation of the rank we obtained numerically.
More precisely, the automorphisms of $\Proj^3$ that permute the coordinates induce automorphisms of the K3 surfaces of the aforementioned families.
Denote $\sigma$ such an automorphism and $\sigma^*$ the induced isometry on $H_2(S)$.
Depending on the nature of the automorphism, either the transcendental lattice is included in $H_2(X)^{\sigma^*}$ (the sublattice fixed by $\sigma^*$) or $H_2(X)^{\sigma^*}$ is included in the Néron-Severi lattice.
The ranks of $H_2(X)^{G}$ for all finite group actions $G$ are known \parencite{Nikulin1979, MichelaEtal2011, Hashimoto2012, Xiao1996}, and this yields lower bounds for the Picard ranks.
However, the question of rigorously proving that the lower bounds match the Picard ranks seems to be still open.
\end{remark}

\subsubsection{Two isomorphic rank 2 smooth quartic surfaces in $\Proj^3$}
\label{sec:hodge_isometry}

\textcite[Thm. 1.4]{OguisoRank2} gives an example of two isomorphic smooth quartic $K3$ surfaces $S_1$ and $S_2$,
that are isomorphic as abstract varieties but not Cremona equivalent (i.e.\ there is no birational automorphism of $\Proj^3$ inducing an isomorphism $S_1\simeq S_2$).
Defining equations $f_1$ and $f_2$ for $S_1$ and $S_2$ are given by 
\begin{equation}
\begin{split}
f_1 &= x^3y + x^2y^2 - xy^3 + x^3z + 2x^2yz - xy^2z - y^3z + x^2z^2 - xyz^2 - 2y^2z^2 - yz^3 - z^4 \\
&\hspace{2em}+ x^3w - 2xy^2w - 2xyzw - xz^2w  + yz^2w + xyw^2 - y^2w^2 - z^2w^2 + xw^3 + yw^3
\end{split}
\end{equation}
and
\begin{equation}
\begin{split}
f_2 &= x^4 + 3x^3z - x^2yz + 3x^2z^2 - 4xyz^2 - y^2z^2 + xz^3 - 3yz^3 - x^2yw + 2xy^2w\\
&\hspace{2em}+ y^3w - 2x^2zw - 2xyzw + 3y^2zw - 3xz^2w - 3yz^2w - 2z^3w + 4xyw^2 \\
&\hspace{2em}+ 2y^2w^2+ 3yzw^2 - z^2w^2 + 2xw^3 + 2yw^3 + zw^3 + w^4
\end{split}
\end{equation}

The goal of this section is to present a method allowing one to numerically verify that these two surfaces are indeed isomorphic.\\

A {\em Hodge isometry} between two varieties $X$ and $Y$ is a morphism $X\to Y$ 
that induces an isometry of the homology groups $H_2(X)\simeq H_2(Y)$ 
 that respects the Hodge decomposition on the complexifications $H_2(X,\C)$ and $H_2(Y,\C)$.
When $X$ is a $K3$ surface, its Hodge decomposition is given by the holomorphic periods (indeed $H^{2,0}(X)$ has rank $1$ and is the complex conjugate of $H^{0,2}(X)$).

By the global Torelli theorem for $K3$ surfaces \parencite[Thm. 5.3]{Huybrechts_2016},
it is sufficient to find a Hodge isometry between the second homology groups of two $K3$ surfaces to prove that they are isomorphic.
In order to recover such an isometry, we proceed in the following way.

Using the methods presented in this paper, we compute the holomorphic period vectors of $S_i$ in some basis of homology $\gamma_1^i, \dots, \gamma_{22}^i$ of $H_2(S_i)$ for $i=1,2$.
This allows to (heuristically) recover the Néron-Severi sublattice $\NS(S_i)$, which we find has rank $2$.
The transcendental lattice $\text{Tr}(S_i)$ is then simply the orthogonal complement of $\NS(S_i)$ in $H_2(S_i)$.
The lattice $\NS(S_i)\oplus \text{Tr}(S_i)$ is a full rank sublattice of $H_2(S_i)$, which may have positive index.
In order to find a Hodge isometry, we look for an isometry between these sublattices that extends to an isometry between the full homology lattices.\\

More explicitely, let $\omega_1$ and $\omega_2$ be the holomorphic forms of $S_1$ and $S_2$ respectively, $\gamma_{1}^1, \dots, \gamma_{22}^1\in H_2(S_1)$ and $\gamma_{1}^2, \dots, \gamma_{22}^2\in H_2(S_2)$ bases of cohomology.
Let $I_1$, $I_2$ be the intersection matrices in these bases, and $\pi_1 = (\int_{\gamma^1_j}\omega_1)_{1\le j\le22}$ and $\pi_2 = (\int_{\gamma^2_j}\omega_2)_{1\le j\le22}$ the row vectors of the periods of the holomorphic form.
Then a Hodge isometry is the data of a matrix $A\in GL_{22}(\Z)$ and a scalar $\lambda\in\C$ such that
\begin{equation}\label{eq:hodge_isom}
\pi_2 A = \lambda \pi_1 \text{ and } ^tAI_2A = I_1\,.
\end{equation}

Let $N_i\in \Z^{22\times 2}$ be the coordinate matrix of a basis of the Néron-Severi group $\NS(S_i)$
and $T_i\in \Z^{22\times 20}$ be the coordinate matrix of a basis of the transcendental lattice $\text{Tr}(S_i)$.
As these sublattices of $H_2(S_i)$ are algebraic invariants, we have the identities
\begin{equation}
AT_1 = T_2B \text{ and } AN_1 = N_2C
\end{equation}
for some invertible matrices $B\in GL_{20}(\Z)$ and $C\in GL_{2}(\Z)$.

Then $\lambda\pi_1T_1 = \pi_2AT_1 = \pi_2T_2B$. In particular, coefficient wise we have
\begin{equation}
\lambda (\pi_1T_1)_i = \sum_j(\pi_2T_2)_jB_{ji}\,,
\end{equation}
which allows us to recover the integers $B_{ji}$ using the LLL algorithm. We find that $\lambda=1$ with the choice $\omega_i = \res(\Omega/f_i)$.

Furthermore
\begin{equation}
^tC^tN_2I_2N_2C = ^tN_1^tAI_2AN_1 = ^tN_1I_1N_1\,.
\end{equation}
This yields 4 quadratic equations in the coefficients of $C$, to which we may find integer solutions. 
There are infinitely many solutions to this system, and not all yield a Hodge isometry -- they only do if the corresponding $A$ is an invertible integer matrix.

Indeed we have 
\begin{equation}
A
\left(\begin{array}{c|c}
&\\
\smash{\raisebox{.5\normalbaselineskip}{$T_1$}} &\smash{\raisebox{.5\normalbaselineskip}{$N_1$}}
\end{array}\right)
=
\left(\begin{array}{c|c}
&\\
\smash{\raisebox{.5\normalbaselineskip}{$T_2$}} &\smash{\raisebox{.5\normalbaselineskip}{$N_2$}}
\end{array}\right)
\left(\begin{array}{r|r}
B&0\\
\hline
0&C
\end{array}\right)\,,
\end{equation}
and thus
\begin{equation}
A
=
\left(\begin{array}{c|c}
&\\
\smash{\raisebox{.5\normalbaselineskip}{$T_2$}} &\smash{\raisebox{.5\normalbaselineskip}{$N_2$}}
\end{array}\right)
\left(\begin{array}{r|r}
B&0\\
\hline
0&C
\end{array}\right)
\left(\begin{array}{c|c}
&\\
\smash{\raisebox{.5\normalbaselineskip}{$T_1$}} &\smash{\raisebox{.5\normalbaselineskip}{$N_1$}}
\end{array}\right)^{-1}\in GL_{22}(\Z)\,.
\end{equation}
We pick solutions for $C$ and check whether they satisfy this condition.
We then verify that the conditions of \eqref{eq:hodge_isom} are also satisfied.
Using this method, we found that there is indeed a Hodge isometry between $S_1$ and $S_2$ up to high precision, which confirms that they are isomorphic.

\subsubsection{An explicit equation of a smooth quartic $K3$ surface with given Picard rank}\label{sec:picard_rank_examples}

\begin{table}[tp]
\begin{center}
\bgroup
\def\arraystretch{1.1}
\begin{tabular}{ c c }\toprule
\textbf{Defining polynomial} &\textbf{Picard number}\\
\midrule
$wx^3 + w^3y + y^4 + xz^3 + z^4 $&1\\
\thead{
$-x^{2} y^{2} + x y^{3} - y^{4} + x^{3} z + x^{2} y z - x y^{2} z - y^{3} z + x y z^{2} - y^{2} z^{2} + x z^{3} - y z^{3} + x^{3} w$\\
$- x^{2} y w - x y^{2} w - y^{2} z w - z^{3} w - x^{2} w^{2} - x y w^{2} + y^{2} w^{2} + y z w^{2} + y w^{3} - z w^{3}$
}&2\\
$x^4 - y^4 + z^4 - w^4 + (x-y)(z+w)y w - (x+y)(z-w)y^2$ &3\\
$x^3y + z^4 + y^3w + zw^3 $&4\\
\thead{
$5 x^4 + x^3 y - x y^3 - 5 y^4 + x^3 z + x^2 y z - x y^2 z - y^3 z + x^2 z^2 + 2 x y z^2 + 3 y^2 z^2 + 2 x z^3 + 2 y z^3$\\
$+ 2 z^4 - x^3 w + x^2 y w + x y^2 w - y^3 w + x^2 z w - y^2 z w - 2 x z^2 w + 2 y z^2 w + 2 z^3 w$\\
$- 2 x^2 w^2 - 2 x y w^2 - 2 y^2 w^2 - 2 x z w^2 - 2 y z w^2 + 2 z^2 w^2 + 2 x w^3 - 2 y w^3 - 2 z w^3 - 4 w^4$\\
}&5\\
$x^3y + y^4 + z^3w + yw^3 + zw^3$ &6\\
$w^3x + x^4 + wx^2z + x^3z + xy^2z - y^3z + wxz^2 + x^2z^2 - xz^3 + z^4$ &7\\
$x^3y + z^4 + y^3w + xw^3 + w^4$ &8\\
$w^4 + wx^2y + y^4 + x^3z - xy^2z + z^4$ &9\\
$x^3y + z^4 + y^3w + w^4$ &10\\
$w^4 + x^4 + x^2y^2 + y^4 - w^3z -^2 xy^2z + x^2z^2 + z^4$ &11\\
$x^3y + y^4 + z^3w + x^2w^2 + w^4 $&12\\
$w^4 +^3 x^4 + wy^3 + y^2z^2 + wz^3 +^2 xz^3$ &13\\
$x^3y + y^4 + z^3w + yw^3 + w^4 $&14\\
$x^3y + y^3z + z^4 + xy^2w + zw^3 $&15\\
$x^3y + y^4 + z^3w + xyw^2 + y^2w^2 + w^4 $&16\\
$x^3y + y^4 + z^4 + x^2w^2 + zw^3 $&17\\
$x^3y + x^3z + y^3z + yz^3 + w^4 $&18\\
$x^3y + z^4 + y^3w + xyzw + xw^3$ &19\\
$x^3y + z^4 + y^3w + xw^3 $&20\\\bottomrule
\end{tabular}
\egroup
\caption{Example polynomial for each Picard number. The new rows are Picard numbers 2, 3 and 5.}
\label{fig:example_table}
\end{center}
\end{table}

In this section we provide complements to Table 6.1 of~\textcite{LairezSertoz_2019}, for which examples of defining equations of quartic surfaces of Picard rank 2, 3 and 5 were missing.
Additionally, we have verified the known entries of this table using the method presented in this paper.

In addition to the example(s) of Section~\ref{sec:hodge_isometry}, Picard rank 2 smooth quartic $K3$ surfaces were found by testing generic polynomials with coefficients in $\{-1,0,1\}$. 
Following the construction of the proof of \textcite[Thm.~1.7]{Oguiso_2012}, we may construct an equation of a smooth quartic surface with Picard rank $3$.
We also stumbled upon a rank $5$ example, completing the missing entries.
The completed table can be found in Table~\ref{fig:example_table}.

Such polynomials are too large (in terms of the number of monomials) for the Picard rank to be recovered using the previous method of~\textcite{Sertoz_2019} with current numerical integration software. Using the SageMath implementation of the algorithm presented in this paper, we were able to numerically recover the Picard rank of these smooth quartic $K3$ surfaces in less than an hour each on a laptop.

\subsection{An example from Feynman integrals: the Tardigrade Graph}
\label{sec:an-example-from}
The methods presented here allowed the study of the geometry of a
parametrised Feynman integral corresponding to the Tardigrade
graph. The associated Feynman integral is
a  relative period 
integral in the sense of~\textcite{bek} and~\textcite{Brown:2015fyf}
for a family of singular quartic  surfaces defined by the Feynman graph
in~\figref{fig:tardigrade}, 
as argued in~\textcite{Bourjaily:2019hmc,Bourjaily:2018yfy} and
characterised  completely in~\textcite{DoranHarderPichonPharabodVanhove_2023}. 
This example is interesting because it goes beyond the scope of this paper, as the quartic surfaces associated to this graph are not smooth.
What's more, it shows how an approach relying on effective Picard-Lefschetz theory can be used to compute the periods of varieties given as elliptic fibrations.
Finally, it shows that our algorithm manages examples from physics that were previously out of reach.

\begin{figure}[h]
\begin{tikzpicture}[scale=0.6]
\filldraw [color = black, fill=none, very thick] (0,0) circle (2cm);
\draw [black,very thick] (-2,0) to (2,0);
\filldraw [black] (2,0) circle (2pt);
\filldraw [black] (0,2) circle (2pt);
\filldraw [black] (0,-2) circle (2pt);
\filldraw [black] (0,0) circle (2pt);
\filldraw [black] (-2,0) circle (2pt);
\draw [black,very thick] (-2,0) to (-3,0);
\draw [black,very thick] (2,0) to (3,0);
\draw [black,very thick] (0,2) to (0,3);
\draw [black,very thick] (0,-2) to (0,-3);
\draw [black,very thick] (0,0) to (0,-1);
\end{tikzpicture}
\caption{The tardigrade graph}\label{fig:tardigrade}
\end{figure}

The Tardigrade graph corresponds to a family of $K3$ surfaces given as the minimal resolution of a family of generically singular quartic surfaces in $\Proj^3$. 
For example, one element of this family, on which the computation was
performed, is the quartic defined by the equation derived in~\textcite[\S8]{DoranHarderPichonPharabodVanhove_2023}
\begin{equation}
\begin{split}
&6124 x^{4} - 24782692 x^{3} x + 24962401977 x^{2} x^{2} - 20842243972 x x^{3} + 4331388844 x^{4} + 6124 w^{4} \\
&\hspace{1em}+ 13827992 x^{3} z - 27919677996 x^{2} x z - 119291704836 x x^{2} z + 50444249752 x^{3} z \\
&\hspace{1em}+ 7840306116 x^{2} z^{2}- 1895725740 x x z^{2} + 168749562396 x^{2} z^{2} + 38842829528 x z^{3}\\
&\hspace{1em} + 168487393048 x z^{3} + 48321305644 z^{4} - 101996680 x^{3} w + 204653103868 x^{2} x w\\
&\hspace{1em} - 85803990572 x x^{2} w + 10300568 x^{3} w - 115176640844 x^{2} z w- 460049503942 x x z w\\
&\hspace{1em} - 18769272012 x^{2} z w - 311785995116 x z^{2} w - 108990818964 x z^{2} w - 62891049316 z^{3} w\\
&\hspace{1em}+ 417760330428 x^{2} w^{2} - 126779372 x x w^{2} + 10306692 x^{2} w^{2} + 179497287052 x z w^{2}\\
&\hspace{1em} + 37592148 x z w^{2}+ 22845754953 z^{2} w^{2} - 101996680 x w^{3} + 12248 x w^{3} - 22389124 z w^{3}\,.
\end{split}
\end{equation}

Using a variation on the methods presented in this paper, of which we give an overview below, we were able to recover algebraic invariants of this family, namely
\begin{enumerate}
\item its generic Picard rank, 11, 
\item the Picard lattice, i.e.\ the kernel of the holomorphic period map $\gamma\mapsto \int_\gamma \omega$,
\item and an embedding of the Picard lattice in the standard $K3$ lattice.
\end{enumerate}

The quartic surfaces considered generically have 4 nodal singularities. 
We want to obtain the periods of their minimal desingularisation.
Despite the presence of singularities, we may still consider a Lefschetz fibration of these varieties and compute the monodromy matrices around the critical values.
From these monodromy matrices, we recover a fibration of a formal smoothing of the singular quartic surface which yields a description of its homology.
The homology of the smoothing of the singular quartic is isometric to that of its desingularisation.
Therefore this allows us to compute a description of the homology of the $K3$ surface. 
We can then integrate the holomorphic form following Section~\ref{sec:comp-periods}.
This shows that we may use the effective Picard-Lefschetz theory presented in this paper to recover the homology of hypersurfaces with nodal singularities.

In practice, however, in an effort to reduce the runtime, we may instead consider an elliptic fibration of the $K3$ surface directly.
This has two main benefits: 
\begin{itemize}
\item The order of the Picard-Fuchs equations that need to be integrated diminishes from $7$ to $3$ (as the genus of the homology of the fibre goes from $3$ for a quartic curve to $1$ for an elliptic curve). 
This greatly decreases the runtime of the computation down to less than a minute.
\item We do not need to consider a modification of the $K3$ surface, which means there are no superfluous exceptional divisors to remove.
\end{itemize}
In order to obtain an elliptic fibration of the $K3$ surface, we project the $K3$ surface away from one of its singular points. 
This yields a double cover of $\Proj^2$ ramified along a sextic curve. 
Up to a change of variable, the defining equation of this sextic curve can be made quartic in one of the variables. 
Taking another variable as a parameter, we obtain an affine equation of the form $y^2 = p(x,t)$ with $p$ quartic in $x$, which gives an elliptic fibration of the $K3$ surface parametrised by $t$.
This fibration has 17 singular fibres, two of which are $I_4$ fibres, one is an $I_2$ and the rest are $I_1$'s as per the Kodaira classification of singular fibers of elliptic surfaces.
We identify the type of the singular fibres by looking at their monodromy matrices.
Passing again to a smoothing of the fibration, we are able to recover the homology and perform the integration.
Further details about this computation are given by \textcite[App.~B]{DoranHarderPichonPharabodVanhove_2023}.

\printbibliography

@ARTICLE{BeukersPeters_1984,
  AUTHOR = {Beukers, Frits and Peters, C. A. M.},
  DATE = {1984},
  DOI = {10/dhb3hr},
  ISSN = {0075-4102},
  JOURNALTITLE = {J. Reine Angew. Math.},
  PAGES = {42--54},
  TITLE = {A family of {{K3}} surfaces and $\zeta (3)$},
  VOLUME = {351},
}

@ARTICLE{bek,
  AUTHOR = {Bloch, Spencer and Esnault, Helene and Kreimer, Dirk},
  DATE = {2006},
  DOI = {10/cq2ftz},
  EPRINT = {math/0510011},
  EPRINTTYPE = {arXiv},
  JOURNALTITLE = {Commun. Math. Phys.},
  PAGES = {181--225},
  TITLE = {{On Motives associated to graph polynomials}},
  VOLUME = {267},
}

@ARTICLE{BlochKerrVanhove_2015,
  AUTHOR = {Bloch, Spencer and Kerr, Matt and Vanhove, Pierre},
  PUBLISHER = {London Mathematical Society},
  DATE = {2015-12},
  DOI = {10/f74v85},
  ISSN = {0010-437X, 1570-5846},
  JOURNALTITLE = {Compos. Math.},
  LANGID = {english},
  NUMBER = {12},
  PAGES = {2329--2375},
  TITLE = {A {{Feynman}} Integral via Higher Normal Functions},
  URLDATE = {2021-01-06},
  VOLUME = {151},
}

@ARTICLE{BookerSijslingSutherlandVoightYasaki_2016,
  AUTHOR = {Booker, Andrew R. and Sijsling, Jeroen and Sutherland, Andrew V. and Voight, John and Yasaki, Dan},
  DATE = {2016},
  DOI = {10/ggck8h},
  ISSN = {1461-1570},
  JOURNALTITLE = {LMS J. Comput. Math.},
  LANGID = {english},
  NUMBER = {A},
  PAGES = {235--254},
  TITLE = {A Database of Genus-2 Curves over the Rational Numbers},
  URLDATE = {2019-03-27},
  VOLUME = {19},
}

@ARTICLE{BostanLairezSalvy_2013,
  AUTHOR = {Bostan, Alin and Lairez, Pierre and Salvy, Bruno},
  PUBLISHER = {ACM},
  ANNOTATION = {ISSAC 2013 (Boston)},
  DATE = {2013},
  DOI = {10/ggcmbk},
  ISBN = {978-1-4503-2059-7},
  JOURNALTITLE = {Proc. {{ISSAC}} 2013},
  PAGES = {93--100},
  TITLE = {Creative Telescoping for Rational Functions Using the {{Griffiths}}–{{Dwork}} Method},
}

@ARTICLE{Bourjaily:2019hmc,
  AUTHOR = {Bourjaily, Jacob L. and McLeod, Andrew J. and Vergu, Cristian and Volk, Matthias and Von Hippel, Matt and Wilhelm, Matthias},
  DATE = {2020},
  DOI = {10/gjrndx},
  EPRINT = {1910.01534},
  EPRINTCLASS = {hep-th},
  EPRINTTYPE = {arXiv},
  JOURNALTITLE = {JHEP},
  PAGES = {078},
  TITLE = {{Embedding Feynman Integral (Calabi-Yau) Geometries in Weighted Projective Space}},
  VOLUME = {01},
}

@ARTICLE{Bourjaily:2018yfy,
  AUTHOR = {Bourjaily, Jacob L. and McLeod, Andrew J. and von Hippel, Matt and Wilhelm, Matthias},
  DATE = {2019},
  DOI = {10/gsbtgr},
  EPRINT = {1810.07689},
  EPRINTCLASS = {hep-th},
  EPRINTTYPE = {arXiv},
  JOURNALTITLE = {Phys. Rev. Lett.},
  NUMBER = {3},
  PAGES = {031601},
  TITLE = {{Bounded Collection of Feynman Integral Calabi-Yau Geometries}},
  VOLUME = {122},
}

@ARTICLE{BouyerFamilieRank,
  AUTHOR = {Bouyer, Florian},
  LANGUAGE = {English},
  DATE = {2018},
  DOI = {10/gr7hm5},
  ISSN = {0065-1036},
  JOURNALTITLE = {Acta Arith.},
  KEYWORDS = {14J28,14C22,14D05},
  NUMBER = {1},
  PAGES = {61--86},
  TITLE = {The {Picard} group of various families of {{\((\mathbb {Z}/2\mathbb {Z})^{4}\)}}-invariant quartic {{\(K3\)}} surfaces},
  VOLUME = {186},
}

@ARTICLE{Brown:2015fyf,
  AUTHOR = {Brown, Francis},
  DATE = {2017},
  DOI = {10/gsbtgs},
  EPRINT = {1512.06409},
  EPRINTCLASS = {math-ph},
  EPRINTTYPE = {arXiv},
  JOURNALTITLE = {Commun. Num. Theor. Phys.},
  PAGES = {453--556},
  TITLE = {{Feynman amplitudes, coaction principle, and cosmic {G}alois group}},
  VOLUME = {11},
}

@ARTICLE{BruinSijslingZotine_2019,
  AUTHOR = {Bruin, Nils and Sijsling, Jeroen and Zotine, Alexandre},
  PUBLISHER = {MSP},
  DATE = {2019-01-28},
  DOI = {10/ggck8d},
  JOURNALTITLE = {Proc. {{ANTS XIII}}},
  LANGID = {english},
  PAGES = {155--171},
  SERIES = {The {{Open Book Series}}},
  TITLE = {Numerical Computation of Endomorphism Rings of {{Jacobians}}},
  URLDATE = {2019-03-27},
}

@BOOK{CarlsonMullerStachPeters_2017,
  AUTHOR = {Carlson, James and Müller-Stach, Stefan and Peters, Chris},
  PUBLISHER = {Cambridge University Press},
  DATE = {2017},
  DOI = {10/gr7hm9},
  EDITION = {2},
  LANGID = {english},
  SERIES = {Cambridge {{Studies}} in {{Advanced Mathematics}}},
  TITLE = {Period Mappings and Period Domains},
  VOLUME = {168},
}

@INBOOK{ChudnovskyChudnovsky_1989,
  AUTHOR = {Chudnovsky, David V. and Chudnovsky, Gregory V.},
  PUBLISHER = {AMS},
  BOOKTITLE = {Theta Functions: {{Bowdoin}} 1987},
  DATE = {1989},
  DOI = {10/gr8dhb},
  PAGES = {167--232},
  SERIES = {Proceedings of {{Symposia}} in {{Pure Mathematics}}},
  TITLE = {Transcendental Methods and Theta-Functions},
  VOLUME = {49.2},
}

@INBOOK{ChudnovskyChudnovsky_1990,
  AUTHOR = {Chudnovsky, David V. and Chudnovsky, Gregory V.},
  PUBLISHER = {Dekker},
  BOOKTITLE = {Computers in Mathematics ({{Stanford}}, {{CA}}, 1986)},
  DATE = {1990},
  PAGES = {109--232},
  SERIES = {Lecture {{Notes}} in {{Pure}} and {{Appl}}. {{Math}}.},
  TITLE = {Computer Algebra in the Service of Mathematical Physics and Number Theory},
  VOLUME = {125},
}

@ARTICLE{CostaMascotSijslingVoight_2019,
  AUTHOR = {Costa, Edgar and Mascot, Nicolas and Sijsling, Jeroen and Voight, John},
  DATE = {2019},
  DOI = {10/ggck8g},
  ISSN = {0025-5718, 1088-6842},
  JOURNALTITLE = {Math. Comput.},
  LANGID = {english},
  NUMBER = {317},
  PAGES = {1303--1339},
  TITLE = {Rigorous Computation of the Endomorphism Ring of a {{Jacobian}}},
  URLDATE = {2019-03-27},
  VOLUME = {88},
}

@BOOK{CoxKatz_1999,
  AUTHOR = {Cox, David A. and Katz, Sheldon},
  PUBLISHER = {AMS},
  DATE = {1999},
  ISBN = {0-8218-1059-6},
  NUMBER = {68},
  SERIES = {Mathematical {{Surveys}} and {{Monographs}}},
  TITLE = {Mirror Symmetry and Algebraic Geometry},
}

@ARTICLE{CuckerKrickShub_2018,
  AUTHOR = {Cucker, Felipe and Krick, Teresa and Shub, Michael},
  DATE = {2018},
  DOI = {10/gd2sm7},
  JOURNALTITLE = {Found. Comput. Math.},
  NUMBER = {4},
  PAGES = {929--970},
  TITLE = {Computing the Homology of Real Projective Sets},
  VOLUME = {18},
}

@ARTICLE{CynkvanStraten2019,
  AUTHOR = {Cynk, S{ł}awomir and van Straten, Duco},
  LANGUAGE = {English},
  DATE = {2019},
  DOI = {10.4064/ap180608-23-10},
  ISSN = {0066-2216},
  JOURNALTITLE = {Ann. Pol. Math.},
  KEYWORDS = {14J32,14G10,11F23},
  PAGES = {243--258},
  TITLE = {Periods of rigid double octic {Calabi}-{Yau} threefolds},
  VOLUME = {123},
}

@ARTICLE{DeconinckVanHoeij_2001,
  OPTIONS = {useprefix=1},
  AUTHOR = {Deconinck, Bernard and van Hoeij, Mark},
  DATE = {2001-05-15},
  DOI = {10/c95vnb},
  ISSN = {0167-2789},
  JOURNALTITLE = {Phys. Nonlinear Phenom.},
  PAGES = {28--46},
  SERIES = {Advances in {{Nonlinear Mathematics}} and {{Science}}: {{A Special Issue}} to {{Honor Vladimir Zakharov}}},
  TITLE = {Computing {{Riemann}} Matrices of Algebraic Curves},
  URLDATE = {2019-06-03},
  VOLUME = {152--153},
}

@ONLINE{DiRoccoEklundGafvert_2020,
  AUTHOR = {Di Rocco, Sandra and Eklund, David and Gäfvert, Oliver},
  DATE = {2020-10-15},
  EPRINT = {2010.07976},
  EPRINTTYPE = {arxiv},
  LANGID = {english},
  NUMBER = {2010.07976},
  TITLE = {Sampling and Homology via Bottlenecks},
}

@ONLINE{DoranHarderPichonPharabodVanhove_2023,
  AUTHOR = {Doran, Charles F. and Harder, Andrew and Pichon-Pharabod, Eric and Vanhove, Pierre},
  DATE = {2023-02-28},
  EPRINT = {2302.14840},
  EPRINTTYPE = {arxiv},
  NUMBER = {2302.14840},
  TITLE = {Motivic Geometry of Two-Loop {{Feynman}} Integrals},
}

@ARTICLE{ElsenhansJahnel2022,
  AUTHOR = {Elsenhans, Andreas-Stephan and Jahnel, Jörg},
  PUBLISHER = {Taylor \& Francis},
  DATE = {2022},
  JOURNALTITLE = {Experimental Mathematics},
  PAGES = {1--32},
  TITLE = {Real and Complex Multiplication on K 3 Surfaces via Period Integration},
}

@ARTICLE{FiteKedlayaRotgerSutherland_2012,
  AUTHOR = {Fité, Francesc and Kedlaya, Kiran S. and Rotger, Víctor and Sutherland, Andrew V.},
  DATE = {2012-09},
  DOI = {10/gr76jt},
  ISSN = {0010-437X, 1570-5846},
  JOURNALTITLE = {Compos. Math.},
  LANGID = {english},
  NUMBER = {5},
  PAGES = {1390--1442},
  TITLE = {Sato–{{Tate}} Distributions and {{Galois}} Endomorphism Modules in Genus 2},
  URLDATE = {2023-05-12},
  VOLUME = {148},
}

@ARTICLE{GiustiLecerfSalvy_2001,
  AUTHOR = {Giusti, Marc and Lecerf, Grégoire and Salvy, Bruno},
  DATE = {2001},
  DOI = {10/fpzjtc},
  ISSN = {0885-064X},
  JOURNALTITLE = {J. Complex.},
  NUMBER = {1},
  PAGES = {154--211},
  TITLE = {A {{Gröbner}} Free Alternative for Polynomial System Solving},
  VOLUME = {17},
}

@ARTICLE{Griffiths_1969a,
  AUTHOR = {Griffiths, Philip A.},
  DATE = {1969-11},
  DOI = {10/dq3zc6},
  ISSN = {0003486X},
  JOURNALTITLE = {Ann. Math.},
  NUMBER = {3},
  PAGES = {460},
  TITLE = {On the Periods of Certain Rational Integrals {{I}}},
  VOLUME = {90},
}

@ARTICLE{Griffiths_1968a,
  AUTHOR = {Griffiths, Phillip A.},
  DATE = {1968},
  ISSN = {0002-9327},
  JOURNALTITLE = {Amer. J. Math.},
  PAGES = {568--626},
  TITLE = {Periods of Integrals on Algebraic Manifolds. {{I}}. {{Construction}} and Properties of the Modular Varieties},
  VOLUME = {90},
}

@BOOK{GriffithsHarris_1978,
  AUTHOR = {Griffiths, Phillip A. and Harris, Joseph},
  PUBLISHER = {Wiley-Interscience},
  DATE = {1978},
  ISBN = {0-471-32792-1},
  TITLE = {Principles of Algebraic Geometry},
}

@ARTICLE{Grothendieck_1966,
  AUTHOR = {Grothendieck, Alexandre},
  DATE = {1966},
  DOI = {10/cxsrzs},
  ISSN = {0073-8301},
  JOURNALTITLE = {Publ. Mathématiques IHÉS},
  NUMBER = {29},
  PAGES = {95--103},
  TITLE = {On the de {{Rham}} Cohomology of Algebraic Varieties},
}

@BOOK{Haraoka_2020,
  AUTHOR = {Haraoka, Yoshishige},
  PUBLISHER = {Springer},
  DATE = {2020},
  DOI = {10/gr8d3g},
  ISBN = {978-3-030-54662-5},
  LANGID = {english},
  SERIES = {Lecture {{Notes}} in {{Mathematics}}},
  SHORTTITLE = {Linear Differential Equations in the Complex Domain},
  TITLE = {Linear Differential Equations in the Complex Domain: From Classical Theory to Forefront},
  URLDATE = {2021-02-15},
}

@BOOK{Hartshorne_1977,
  AUTHOR = {Hartshorne, Robin},
  PUBLISHER = {Springer},
  DATE = {1977},
  ISBN = {0-387-90244-9},
  NUMBER = {52},
  SERIES = {Graduate {{Texts}} in {{Mathematics}}},
  TITLE = {Algebraic Geometry},
}

@BOOK{Hatcher_2002,
  AUTHOR = {Hatcher, Allen},
  PUBLISHER = {Cambridge University Press},
  DATE = {2002},
  ISBN = {0-521-79160-X 0-521-79540-0},
  TITLE = {Algebraic Topology},
}

@INBOOK{Hirzebruch_1956,
  AUTHOR = {Hirzebruch, Friedrich},
  PUBLISHER = {Erven P. Noordhoff N. V.},
  BOOKTITLE = {Proceedings of ICM 1954},
  DATE = {1956},
  LANGID = {german},
  TITLE = {Der Satz von Riemann-Roch in Faisceau-theoretischer Formulierung: einige Anwendungen und offene Fragen},
}

@BOOK{Huybrechts_2016,
  AUTHOR = {Huybrechts, Daniel},
  PUBLISHER = {Cambridge University Press},
  DATE = {2016},
  DOI = {10/fh6x},
  ISBN = {978-1-316-59419-3},
  LANGID = {english},
  TITLE = {Lectures on {{K3 Surfaces}}},
  URLDATE = {2018-09-24},
}

@INBOOK{KauersJaroschekJohansson_2015,
  AUTHOR = {Kauers, Manuel and Jaroschek, Maximilian and Johansson, Fredrik},
  EDITOR = {Gutierrez, Jaime and Schicho, Josef and Weimann, Martin},
  PUBLISHER = {Springer},
  BOOKTITLE = {Computer {{Algebra}} and {{Polynomials}}: {{Applications}} of {{Algebra}} and {{Number Theory}}},
  DATE = {2015},
  DOI = {10/f3s5ht},
  ISBN = {978-3-319-15081-9},
  LANGID = {english},
  PAGES = {105--125},
  SERIES = {Lecture {{Notes}} in {{Computer Science}}},
  TITLE = {Ore Polynomials in {{Sage}}},
}

@ARTICLE{LairezSertoz_2019,
  AUTHOR = {Lairez, Pierre and Sertöz, Emre Can},
  PUBLISHER = {Society for Industrial and Applied Mathematics},
  DATE = {2019-01-01},
  DOI = {10/ggck6n},
  JOURNALTITLE = {SIAM J. Appl. Algebra Geom.},
  NUMBER = {4},
  PAGES = {559--584},
  SHORTTITLE = {A Numerical Transcendental Method in Algebraic Geometry},
  TITLE = {A Numerical Transcendental Method in Algebraic Geometry: Computation of {{Picard}} Groups and Related Invariants},
  URLDATE = {2021-10-14},
  VOLUME = {3},
}

@ARTICLE{Lamotke_1981,
  AUTHOR = {Lamotke, Klaus},
  DATE = {1981},
  DOI = {10/dw8m2q},
  ISSN = {00409383},
  JOURNALTITLE = {Topology},
  LANGID = {english},
  NUMBER = {1},
  PAGES = {15--51},
  TITLE = {The Topology of Complex Projective Varieties after {{S}}. {{Lefschetz}}},
  URLDATE = {2018-12-14},
  VOLUME = {20},
}

@BOOK{Lefschetz_1924,
  AUTHOR = {Lefschetz, Solomon},
  PUBLISHER = {Gauthier-Villars},
  DATE = {1924},
  LANGID = {french},
  TITLE = {L'analysis situs et la géométrie algébrique},
}

@ARTICLE{LenstraLenstraLovasz_1982,
  AUTHOR = {Lenstra, A. K. and Lenstra, H. W. and Lovàsz, L.},
  DATE = {1982-12},
  DOI = {10/c3j8hx},
  ISSN = {0025-5831, 1432-1807},
  JOURNALTITLE = {Math. Ann.},
  LANGID = {english},
  NUMBER = {4},
  PAGES = {515--534},
  TITLE = {Factoring Polynomials with Rational Coefficients},
  URLDATE = {2018-07-08},
  VOLUME = {261},
}

@ARTICLE{Mather_2012,
  AUTHOR = {Mather, John},
  DATE = {2012-01-13},
  DOI = {10/gg2nbz},
  ISSN = {0273-0979, 1088-9485},
  JOURNALTITLE = {Bull. Am. Math. Soc.},
  LANGID = {english},
  NUMBER = {4},
  PAGES = {475--506},
  TITLE = {Notes on {{Topological Stability}}},
  URLDATE = {2020-11-26},
  VOLUME = {49},
}

@ARTICLE{Mezzarobba_2010,
  AUTHOR = {Mezzarobba, Marc},
  PUBLISHER = {ACM},
  DATE = {2010},
  DOI = {10/cg7w72},
  JOURNALTITLE = {Proc. {{ISSAC}} 2010},
  PAGES = {139--146},
  TITLE = {{{NumGFun}}: A Package for Numerical and Analytic Computation with {{D-finite}} Functions},
}

@ONLINE{Mezzarobba_2016,
  AUTHOR = {Mezzarobba, Marc},
  DATE = {2016},
  EPRINT = {1607.01967},
  EPRINTTYPE = {arxiv},
  LANGID = {english},
  NUMBER = {1607.01967},
  TITLE = {Rigorous Multiple-Precision Evaluation of {{D-finite}} Functions in {{Sagemath}}},
}

@ARTICLE{MolinNeurohr_2019,
  AUTHOR = {Molin, Pascal and Neurohr, Christian},
  DATE = {2019},
  DOI = {10/ggck8t},
  ISSN = {0025-5718, 1088-6842},
  JOURNALTITLE = {Math. Comp.},
  LANGID = {english},
  NUMBER = {316},
  PAGES = {847--888},
  TITLE = {Computing Period Matrices and the {{Abel-Jacobi}} Map of Superelliptic Curves},
  URLDATE = {2018-12-13},
  VOLUME = {88},
}

@INBOOK{NorihikoShiga2001,
  AUTHOR = {Narumiya, Norihiko and Shiga, Hironori},
  LANGUAGE = {English},
  PUBLISHER = {AMS},
  BOOKTITLE = {Proceedings on Moonshine and related topics},
  DATE = {2001},
  PAGES = {139--161},
  TITLE = {The mirror map for a family of {{\(K3\)}} surfaces induced from the simplest 3-dimensional reflexive polytope.},
}

@THESIS{Neurohr_2018,
  AUTHOR = {Neurohr, Christian},
  INSTITUTION = {Universität Oldenburg},
  URL = {http://oops.uni-oldenburg.de/3607/1/neueff18.pdf},
  DATE = {2018},
  TITLE = {Efficient Integration on {{Riemann}} Surfaces and Applications},
}

@ARTICLE{NiyogiSmaleWeinberger_2008,
  AUTHOR = {Niyogi, Partha and Smale, Stephen and Weinberger, Shmuel},
  DATE = {2008},
  DOI = {10/b7qcdg},
  JOURNALTITLE = {Discrete Comput. Geom.},
  LANGID = {english},
  NUMBER = {1},
  PAGES = {419--441},
  TITLE = {Finding the Homology of Submanifolds with High Confidence from Random Samples},
  VOLUME = {39},
}

@ARTICLE{OakuTakayama_1999,
  AUTHOR = {Oaku, Toshinori and Takayama, Nobuki},
  DATE = {1999},
  DOI = {10/dqgzc6},
  ISSN = {0022-4049},
  JOURNALTITLE = {J. Pure Appl. Algebra},
  NUMBER = {1-3},
  PAGES = {201--233},
  TITLE = {An Algorithm for de {{Rham}} Cohomology Groups of the Complement of an Affine Variety via {{D-module}} Computation},
  VOLUME = {139},
}

@ONLINE{Oguiso_2012,
  AUTHOR = {Oguiso, Keiji},
  DATE = {2012-06-22},
  EPRINT = {1206.5049},
  EPRINTTYPE = {arxiv},
  NUMBER = {1206.5049},
  TITLE = {Smooth Quartic {{K3}} Surfaces and {{Cremona}} Transformations {{II}}},
}

@ARTICLE{OguisoRank2,
  AUTHOR = {Oguiso, Keiji},
  LANGUAGE = {English},
  DATE = {2017},
  DOI = {10/gr7hm6},
  ISSN = {1027-5487},
  JOURNALTITLE = {Taiwanese J. Math.},
  KEYWORDS = {14J28,14J50,14E07},
  NUMBER = {3},
  PAGES = {671--688},
  TITLE = {Isomorphic quartic {{\(K3\)}} surfaces in the view of {Cremona} and projective transformations},
  VOLUME = {21},
}

@ARTICLE{Pham_1965,
  AUTHOR = {Pham, Frédéric},
  DATE = {1965},
  DOI = {10/ggck9f},
  ISSN = {0037-9484, 2102-622X},
  JOURNALTITLE = {B. Soc. Math. Fr.},
  PAGES = {333--367},
  TITLE = {Formules de {{Picard-Lefschetz}} Généralisées et Ramification Des Intégrales},
  URLDATE = {2017-11-21},
  VOLUME = {79},
}

@BOOK{PreparataShamos1985,
  AUTHOR = {Preparata, Franco P. and Shamos, Michael I.},
  LANGUAGE = {English},
  PUBLISHER = {New York etc.: Springer-Verlag},
  DATE = {1985},
  ISBN = {0-387-96131-3},
  KEYWORDS = {68-01,68Q25,68U05},
  TITLE = {Computational geometry. {An} introduction},
}

@ARTICLE{Sertoz_2019,
  AUTHOR = {Sertöz, Emre Can},
  DATE = {2019-04-10},
  DOI = {10/ggck7t},
  ISSN = {0025-5718, 1088-6842},
  JOURNALTITLE = {Math. Comput.},
  LANGID = {english},
  NUMBER = {320},
  PAGES = {2987--3022},
  TITLE = {Computing Periods of Hypersurfaces},
  URLDATE = {2019-09-12},
  VOLUME = {88},
}

@ARTICLE{shiga79,
  AUTHOR = {Shiga, Hironori},
  LANGUAGE = {English},
  DATE = {1979},
  ISSN = {0391-173X},
  JOURNALTITLE = {Ann. Sc. Norm. Super. Pisa Cl. Sci. (4)},
  KEYWORDS = {14J25,11F27,32N05,32J15},
  PAGES = {609--635},
  TITLE = {One attempt to the {K3} modular function. {I}},
  VOLUME = {6},
}

@SOFTWARE{Swierczewski_2017,
  AUTHOR = {Swierczewski, Chris},
  URL = {https://github.com/abelfunctions/abelfunctions},
  DATE = {2017},
  SHORTTITLE = {Abelfuctions},
  TITLE = {Abelfunctions: {{A}} Library for Computing with {{Abelian}} Functions, {{Riemann}} Surfaces, and Algebraic Curves},
}

@SOFTWARE{sagemath,
  AUTHOR = {{The Sage Developers}},
  DATE = {2023},
  DOI = {10/gr8dhc},
  TITLE = {{{SageMath}}, the {{Sage Mathematics Software}}},
}

@INBOOK{TretkoffTretkoff_1984,
  AUTHOR = {Tretkoff, C. L. and Tretkoff, M. D.},
  PUBLISHER = {AMS},
  BOOKTITLE = {Contributions to Group Theory},
  DATE = {1984},
  DOI = {10/fzq8bb},
  PAGES = {467--519},
  SERIES = {Contemporary {{Mathematics}}},
  TITLE = {Combinatorial Group Theory, {{Riemann}} Surfaces and Differential Equations},
  VOLUME = {33},
}

@ARTICLE{VanDerHoeven_1999,
  OPTIONS = {useprefix=1},
  AUTHOR = {van der Hoeven, Joris},
  DATE = {1999},
  DOI = {10/b95scc},
  ISSN = {0304-3975},
  JOURNALTITLE = {Theoret. Comput. Sci.},
  NUMBER = {1},
  PAGES = {199--215},
  TITLE = {Fast Evaluation of Holonomic Functions},
  VOLUME = {210},
}

@article{Nikulin1979,
	author = {Nikulin, V. V.},
	date-added = {2023-12-15 18:16:08 +0100},
	date-modified = {2023-12-15 18:16:13 +0100},
	fjournal = {Trudy Moskovskogo Matematicheskogo Obshchestva},
	issn = {0134-8663},
	journal = {Tr. Mosk. Mat. O.-va},
	keywords = {14J15,14J25,32J15,53C55},
	language = {Russian},
	pages = {75--137},
	title = {Finite automorphism groups of {K{\"a}hlerian} surfaces of type {K3}},
	volume = {38},
	year = {1979},
	zbl = {0433.14024},
	zbmath = {3674247}}

@article{Xiao1996,
	author = {Xiao, Gang},
	date-added = {2023-12-15 18:11:03 +0100},
	date-modified = {2023-12-15 18:11:11 +0100},
	doi = {10.5802/aif.1507},
	fjournal = {Annales de l'Institut Fourier},
	issn = {0373-0956},
	journal = {Ann. Inst. Fourier},
	keywords = {14L30,14J28,11G15,11R32,14M17},
	language = {English},
	number = {1},
	pages = {73--88},
	title = {Galois covers between {{\(K3\)}} surfaces},
	volume = {46},
	year = {1996},
	zbl = {0845.14026},
	zbmath = {853995},
	bdsk-url-1 = {https://doi.org/10.5802/aif.1507}}

@article{Hashimoto2012,
	author = {Hashimoto, Kenji},
	date-added = {2023-12-15 18:04:52 +0100},
	date-modified = {2023-12-15 18:05:02 +0100},
	doi = {10.1215/00277630-1548511},
	fjournal = {Nagoya Mathematical Journal},
	issn = {0027-7630},
	journal = {Nagoya Math. J.},
	keywords = {14J28},
	language = {English},
	pages = {99--153},
	title = {Finite symplectic actions on the {{\(K3\)}} lattice},
	volume = {206},
	year = {2012},
	zbl = {1243.14029},
	zbmath = {6050957},
	bdsk-url-1 = {https://doi.org/10.1215/00277630-1548511}}

@article{MichelaEtal2011,
	author = {Artebani, Michela and Sarti, Alessandra and Taki, Shingo},
	date-added = {2023-12-15 18:03:39 +0100},
	date-modified = {2023-12-15 18:03:51 +0100},
	doi = {10.1007/s00209-010-0681-x},
	fjournal = {Mathematische Zeitschrift},
	issn = {0025-5874},
	journal = {Math. Z.},
	keywords = {14J28,14J50,14J10},
	language = {English},
	number = {1-2},
	pages = {507--533},
	title = {{{\(K3\)}} surfaces with non-symplectic automorphisms of prime order. {With} an appendix by {Shigeyuki} {Kond{\=o}}},
	volume = {268},
	year = {2011},
	zbl = {1218.14024},
	zbmath = {5909127},
	bdsk-url-1 = {https://doi.org/10.1007/s00209-010-0681-x}}

\end{document}